\documentclass{amsart}
\pdfoutput=1
\setcounter{tocdepth}{2}

\usepackage{float}
%\floatstyle{boxed}
\restylefloat{figure}
\floatplacement{figure}{t}

\usepackage{mathtools}
\usepackage{amssymb}
\usepackage{mathrsfs}
\usepackage{bm}
\usepackage[all]{xy}
\UseComputerModernTips
\CompileMatrices
\usepackage[colorlinks,allcolors=blue]{hyperref}
\usepackage{ifthen}
\usepackage{latexsym}
\usepackage{tikz}
\usepackage{tikz-3dplot}
\usetikzlibrary{backgrounds}
\allowdisplaybreaks
%
% equations, theorems, etc. numbered in same sequence
\numberwithin{equation}{section}
%
% theorems
\newtheorem{theorem}[equation]{Theorem}
\newtheorem{lemma}[equation]{Lemma}
\newtheorem{proposition}[equation]{Proposition}
\newtheorem{corollary}[equation]{Corollary}

%
% definitions
\theoremstyle{definition}
\newtheorem{definition}[equation]{Definition}

\newtheorem{context}[equation]{B\'ezout Context}

%
% remarks, etc.
\theoremstyle{remark}

\hyphenation{pa-ra-met-rized}
\renewcommand{\phi}{\varphi}
\newcommand{\bndry}{\partial}
\DeclareMathSymbol{\boxprod}{\mathbin}{AMSa}{"03} % binary operator version of \square
\DeclareMathSymbol{\mixprod}{\mathbin}{AMSa}{"4F} % binary operator version of \triangledown

\newcommand{\dirsum}{\oplus}

\newcommand{\disjunion}{\sqcup}

\newcommand{\dual}{^\vee}

\newcommand{\homeo}{\approx}

\newcommand{\includesin}{\hookrightarrow}
\newcommand{\intersect}{\cap}
\newcommand{\iso}{\cong}
\newcommand{\Mackey}[1]{{\underline {#1}}}

\newcommand{\tensor}{\otimes}
\newcommand{\union}{\cup}

\newcommand{\C}{{\mathbb C}}

			% irreducible repn of Z/2

					% irreducible repn of Z/p

\newcommand{\NN}{{\mathbb N}}

\newcommand{\PP}{\mathbb{P}}
\newcommand{\R}{{\mathbb R}}

\newcommand{\Z}{\mathbb{Z}}
\newcommand{\ZZ}{\mathbb{Z}}
%
% categories of virtual bundles:
			% initial segments
			% lowest terms
	% repns in v\Vinit
%
% our group
\newcommand{\GG}{{C_2}}
%
%seans commands

\newcommand{\cwt}{\zeta_1}
\newcommand{\cxwt}{\zeta_0}
\newcommand{\cwd}{\widehat{c}_\omega}
\newcommand{\cxwd}{\widehat{c}_{\chiw}}
\newcommand{\cd}[1][{}]{\,\widehat{c}^{\;#1}}
%
%Thomas's commands
\newcommand{\Cpq}[2]{\C^{#1+#2\sigma}}
\newcommand{\Cq}[1]{\C^{#1\sigma}}
\newcommand{\Xpq}[2]{\PP(\Cpq{#1}{#2})}
\newcommand{\partition}[1]{\underline{{#1}}}
\newcommand{\IScl}[1]{Y_{\partition{#1}}}   
\newcommand{\ISc}[3]{Y_{#1}^{#2,#3}}
\DeclareMathOperator{\Gr}{Gr}
\newcommand{\BScl}[1]{\mathbb{Y}_{\partition{#1}}} 
\newcommand{\BSc}[3]{\mathbb{Y}_{#1}^{#2,#3}} % Schubert variety (cohomological notation)
\newcommand{\tBSc}[3]{\widetilde{\mathbb{Y}}_{#1}^{#2,#3}} 

% invariant Schubert variety
\newcommand{\Xp}[1]{\PP(\C^{#1})}
\newcommand{\Xq}[1]{\PP(\Cq{#1})}

\newcommand{\chiw}{\chi\omega}

\newcommand{\Fpqk}[3]{V_{#1,#2}^{#3}}                               % one space in a flag
\newcommand{\Freep}[1]{\mathbb{F}^{#1}}                             % free Schubert variety
\newcommand{\Freel}[1]{\mathbb{F}_{#1}}  
    % free Schubert variety partition
\newcommand{\Spqk}[3]{\mathbb{X}_{{#1},{#2}}^{#3}}                  % Schubert variety (homological notation)

\newcommand{\tSpqk}[3]{\widetilde{\mathbb{X}}_{{#1},{#2}}^{#3}}     % desingularized Schubert variety

\newcommand{\eoz}{Q}
\newcommand{\exoz}{\chi Q}
\newcommand{\rem}[1]{\overline{#1}}
%temporary

%
\newcommand{\I}{{\mathrm{I}}}
\newcommand{\II}{{\mathrm{II}}}
\newcommand{\III}{{\mathrm{III}}}
\newcommand{\IV}{{\mathrm{IV}}}

%
% orbit category:

%
% stable category:

%

%

\DeclareMathOperator{\grad}{grad}
\newcommand{\copt}{{\mathbb H}}
%
% common tensor product

%
\begin{document}

\title{A geometric $\GG$-equivariant B\'{e}zout theorem}
\author{Steven R. Costenoble}
\address{Steven R. Costenoble, Department of Mathematics\\Hofstra University\\
   Hempstead, NY 11549}
\email{Steven.R.Costenoble@Hofstra.edu}
\author{Thomas Hudson}
\address{Thomas Hudson, College of Transdisciplinary Studies, DGIST, 
%333, Techno jungang-daero
%Hyeonpung-eup, Dalseong-gun,
Daegu, 42988, Republic of Korea}
\email{hudson@dgist.ac.kr}
% \author{Sean Tilson}
% \address{Sean Tilson, Fachgruppe Mathematik und Informatik, Bergische Universit\"at Wuppertal, 42119 Wuppertal, Germany}
% \email{tilson@math.uni-wuppertal.de}

\begin{abstract}
Classically, B\'ezout's theorem says that an intersection of hypersurfaces in a projective space is
rationally equivalent to a number of copies of a smaller projective space, the number depending on
the degrees of the hypersurfaces.
We give a generalization of that result to the context of $C_2$-equivariant hypersurfaces in
$C_2$-equivariant linear projective space, expressing the intersection as a linear combination of
equivariant Schubert varieties.
\end{abstract}

\keywords{equivariant cohomology, equivariant characteristic class, projective space, B\'ezout's theorem, Schubert varieties}
\makeatletter
\@namedef{subjclassname@2020}{%
  \textup{2020} Mathematics Subject Classification}
\makeatother
\subjclass[2020]{Primary: 55N91, 14N10;
Secondary: 14M15, 14N15, 55R40, 55R91}

\maketitle

\tableofcontents

\section*{Introduction}

The general nonequivariant B\'ezout theorem, as given in \cite{IntersectionFulton}, for example,
says the following. Suppose that we have $n$ nonzero homogeneous polynomials $f_i$, $1\leq i\leq n$,
in $N>n$ complex variables. Let $d_i$ be the degree of $f_i$ and let $\Delta = d_1d_2\cdots d_n$.
Each $f_i$ can be thought of as a section of the line bundle $O(d_i)$, the $d_i$-fold tensor power
of the dual of the tautological bundle over $\PP^{N-1}$.
The zero locus of $f_i$ is $H_i$, a hypersurface in $\PP^{N-1}$, and the common zero locus
of all the polynomials is the intersection $\bigcap_i H_i$.
B\'ezout's theorem then says that, in the Chow ring of $\PP^{N-1}$, we have
\[
    \Bigl[\bigcap_i H_i\Bigr] = \Delta[\PP^{N-n-1}] ,
\]
that is, the common zero locus is generically rationally equivalent to $\Delta$ copies of $\PP^{N-n-1}$.
The classical case is when $n = N-1$, in which case the zero locus consists of $\Delta$ points,
assuming the points are counted with multiplicity.
Our main result is Theorem~\ref{thm:Bezout}, which generalizes
B\'ezout's theorem to the simplest equivariant context, considering actions
of the group $C_2 = \{1, t\}$. 
The result involves a number of interesting subspaces of equivariant projective space,
no longer just smaller projective spaces.

%In order to achieve this generalization we need to understand which equivariant spaces replace the projective spaces appearing in the formula.

The usual approach to proving B\'ezout's theorem nonequivariantly is to first 
notice that $[H_i] = e(O(d_i))$, the Euler class of the line bundle $O(d_i)$,
and that 
\[
    \Bigl[\bigcap_i H_i\Bigr] = e(O(d_1)\dirsum\cdots\dirsum O(d_n)) = \prod_i e(O(d_i)).
\]
The Chow ring is computed to be $\ZZ[\cd]/\langle \cd[N]\rangle$,
with $\cd[n] = [\PP^{N-n-1}]$,
and $[H_i] = e(O(d_i)) = d_i\cd$,
from all of which B\'ezout's theorem easily follows.
In place of the Chow ring we could use singular cohomology $H^*(\PP^{N-1};\ZZ)$, which is isomorphic
to the Chow ring in this case.

To generalize this to our equivariant context, we replace
$\PP^{N-1}$ by the $C_2$-space $\Xpq pq$,
the space of complex lines in $\Cpq pq = \C^p\dirsum\Cq q$, where $\C^p$ has trivial $C_2$-action and
$\Cq q = (\Cq{})^q$ with $\Cq{}$ being $\C$ with $t$ acting as $-1$.
In \cite{CHTAlgebraic} we carried out the algebraic part of this project.
Equivariantly, we do not yet know the proper analogue of the Chow ring,
let alone its value on $\Xpq pq$, so
in its place we use equivariant ordinary cohomology, with the extended grading developed
in \cite{CostenobleWanerBook}, and the calculation of the ordinary cohomology
of complex projective spaces given in \cite{CHTFiniteProjSpace},
where we showed that the equivariant ordinary cohomology
of $\Xpq pq$ is free over the $RO(\GG)$-graded cohomology of a point and
gave an explicit basis.
In \cite{CHTAlgebraic} we computed the Euler classes of sums of line bundles,
expressing them in terms of that basis.

In this paper, we redo the calculation in order to give a geometric interpretation
similar to the nonequivariant B\'ezout theorem.
There are several issues we need to address.
The first is to give geometric meaning to ordinary equivariant (co)homology.
As is well-known nonequivariantly, not all ordinary homology classes of a space $X$ can be represented
by manifolds, that is, as the images of fundamental classes of smooth manifolds mapping to $X$.
However, there is the Baas-Sullivan approach that allows us to consider ordinary homology
as the bordism theory of manifolds with certain kinds of singularities.
Hastings and Waner generalized this approach to $RO(G)$-graded ordinary homology
and their results can be generalized further to the ordinary theory we use here.
This can then be used to interpret the ordinary cohomology of $G$-manifolds.
This interpretation is discussed in \S\ref{sec:singular}.

Another issue is that, nonequivariantly, the integers $\ZZ$ play two coincidental roles:
On the one hand, we have the calculational fact that $H^*(\PP^{N-1};\ZZ)$ is a free
$\ZZ$-module, with basis given by the powers of $\cd$.
The equivariant analogue is that the equivariant ordinary cohomology of $\Xpq pq$
is free over the $RO(\GG)$-graded cohomology of a point
(with a more complicated, but known, basis).
But note that the equivariant cohomology of a point is not simply $\ZZ$.
(See Appendix~\ref{app:ordinarycohomology} for a review of equivariant ordinary cohomology
theory, including the structure of the cohomology of a point.)
On the other hand, because the coefficients are $\ZZ$,
the nonequivariant B\'ezout theorem is often thought of as
a counting result: The common zero locus of the polynomials $f_i$,
represented by the product of Euler classes,
is equivalent to $\Delta$ copies of a projective space.
Given that the equivariant cohomology of a point is not just $\ZZ$,
our calculation in \cite{CHTAlgebraic}, which expressed Euler classes in terms
of a basis, using coefficients from the cohomology of a point, is not suitable for counting in this way.
In \S\ref{sec:EulerSums} we redo the algebraic calculation in a way that will
allow us to express the answer as a linear combination of classes represented by (singular) manifolds,
with integer coefficients,
which will be the form of our geometric B\'ezout theorem.

The singular manifolds that appear in our results are analogues of Schubert varieties.
Nonequivariantly, we can consider Schubert varieties in projective spaces, but
this is not usually done explicitly because
they are just smaller projective spaces.
Equivariantly, things get more interesting.
We believe that the Schubert varieties we introduce here will underlie similar calculations
for equivariant Grassmannians, but that is work that remains to be done. To get a feeling for what kind of equivariant spaces appear in our result, the  reader may want to look at the discussion of the dimension-1 case in \S\ref{sec:Bezout}, where we provide an explicit description of the Schubert varieties involved in the formula.

This paper is organized as follows.
To keep this paper as self-contained as possible, in an appendix we give the facts we need about equivariant ordinary cohomology
with extended grading
and the cohomology of $\Xpq pq$, summarizing results from \cite{CostenobleWanerBook}
and \cite{CHTFiniteProjSpace}.
As mentioned above, in \S\ref{sec:EulerSums} we redo the algebraic calculation of \cite{CHTAlgebraic}, of the Euler class of a sum
of line bundles, in a form better suited to give us our geometric result.
In \S\ref{sec:singular}, we review the approach of Hastings and Waner to representing equivariant ordinary homology
by singular manifolds, and its generalization to ordinary homology with extended grading.
In \S\ref{sec:Schubert} we introduce the singular manifolds used in our B\'ezout theorem,
which are close analogues of nonequivariant Schubert varieties, and
in \S\ref{sec:representing} we discuss how various terms appearing in our calculations
from \S\ref{sec:EulerSums} are represented by these Schubert varieties.
Finally, in \S\ref{sec:Bezout}, we put these results together to state our geometric B\'ezout theorem,
and give examples that show what kinds of singular manifolds appear in the codimension-1 case and the cases of 
dimensions~0, 1, and~2.

\subsection*{Acknowledgments} Both authors would like to thank Sean Tilson for his help in laying the foundations of this work and for bridging between two otherwise unconnectable worlds. The second author was partially supported by the DFG through the SPP 1786: \textit{Homotopy theory and Algebraic Geometry}, Project number 405468058: \textit{$C_2$-equivariant Schubert calculus of homogeneous spaces}.
The initial stages of this research took place while the second author was affiliated to the Bergische Universit\"{a}t Wuppertal, there he benefited from the  the activities of the research training group
\emph{GRK 2240: Algebro-Geometric Methods in Algebra, Arithmetic and Topology},
which is funded by the DFG.

\section{The Euler class of a sum of line bundles}\label{sec:EulerSums}

In \cite[Proposition 6.12]{CHTFiniteProjSpace} we gave a formula for the Euler class of a sum of line bundles over an equivariant projective space $\Xpq{p}{q}$. The goal of this section is to provide an alternative expression for the same quantity that lends itself better to a geometric interpretation. We begin by establishing some notation. Let $F=L_1\oplus\cdots \oplus L_n$ be the direct sum of $n$ line bundle $L_i\rightarrow \Xpq{p}{q}$. Since the Euler class is multiplicative with respect to direct sum, we have $e(F)=\prod_{i=1}^n e(L_i)$. 
In the nonequivariant case this computation is quite simple since one can always find $d_i\in \ZZ$ for which $L_i\iso O(d_i)$ and the additive formal group law in cohomology allows one to conclude that $e(L_i)=d_i e(O(1))$. As a consequence, we get that the nonequivariant Euler class of $F$ is given by $e(F)=d_1\cdots d_n e(O(1))^n$, where the product $d:=d_1\cdots d_n$ is the degree of $e(F)$ viewed as an element of $H^{2n}(\Xp{p+q-1})$. 

In equivariant cohomology we do not have a formal group law at our disposal, but we can still calculate the Euler classes of line bundles. In \cite[Theorem~6.3]{CHTFiniteProjSpace}, we showed that line bundles over equivariant projective spaces can be conveniently subdivided into four families, allowing us to describe their Euler classes in terms of the multiplicative generators $\cxwt$, $\cwt$, $\cwd=e(O(1))$, $\cxwd=e(\chi O(1))$, together with 
\begin{align*}
\eoz &:= e(O(2)) = \tau(c) + e^{-2}\kappa \cwd\cxwd \qquad\text{and} \\
\exoz &:= e(\chi O(2)) = \cxwt\cwd + \cwt\cxwd\,.
\end{align*}
The exact expressions were obtained in \cite[Proposition 6.5]{CHTFiniteProjSpace} and, by making use of $\eoz$ and $\exoz$, can be restated as as follows.

\begin{proposition}\label{prop: base case}
In the cohomology of $\Xpq pq$, for $k\in\Z$, we have:
\begin{alignat*}{2}
 \textup{i)}&\quad& e(O(2k+1)) &= \cwd(1 + k\tau(1) + ke^{-2}\kappa\cwt\cxwd)%=\cwd+n\cwt\Big[\tau(c)+e^{-2}\kappa\cwd\cxwd\Big]=\cwd+n\cwt \left[F^{p+q-2}_{p-1,q-1}\right]^*\\
  =\cwd+k\cwt\eoz;\\
 \textup{ii)}&&  e(O(2k)) &= k\cwd(\tau(\iota^{-2})\cxwt + e^{-2}\kappa\cxwd)%=n\Big[\tau(c)+e^{-2}\kappa \cwd\cxwd \Big]=%n\left[F^{p+q-2}_{p-1,q-1}\right]^*\\
 =k\eoz;\\
 \textup{iii)}&& e(\chi O(2k+1)) &= \cxwd(1 + k\tau(1) + ke^{-2}\kappa\cxwt\cwd)%=\cxwd+n\cxwt\Big[\tau(c)+e^{-2}\kappa\cwd\cxwd\Big]=\cxwd+n\cxwt \left[F^{p+q-2}_{p-1,q-1}\right]^*
 =\cxwd+k\cxwt\eoz;\\
 \textup{iv)}&& e(\chi O(2k)) &= k\tau(1)\cwt\cxwd + e^2%=e(\chi O(2))+(n-1)\xi\left[F^{p+q-2}_{p-1,q-1}\right]^*=\cwd\cxwt+\cxwd\cwt+(n-1)\xi\eoz \\
 =\exoz+(k-1)\tau(\iota^2c).%\xi\eoz.\\
\end{alignat*}
\end{proposition}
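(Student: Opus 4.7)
The plan is to observe that each of the four formulas in the statement is a chain of two equalities, and that the first equality in each line is just the content of \cite[Proposition 6.5]{CHTFiniteProjSpace}, which I take as already established. So the actual work is to verify the second equality in each line, which recasts the Euler class in the compact form built from $\eoz$ and $\exoz$. The proof therefore reduces to substituting the defining expressions
\[
\eoz = \tau(c) + e^{-2}\kappa\,\cwd\cxwd \qquad\text{and}\qquad \exoz = \cxwt\cwd + \cwt\cxwd
\]
into the compact right-hand sides and matching the results, term by term, against the explicit formulas.

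For (i), expanding $\cwd + k\cwt\eoz$ yields $\cwd + k\cwt\tau(c) + ke^{-2}\kappa\,\cwt\cwd\cxwd$. The first and last summands already coincide with those in $\cwd(1 + k\tau(1) + ke^{-2}\kappa\,\cwt\cxwd)$, so the only thing left to check is a single middle identity of the shape $\cwt\tau(c) = \cwd\tau(1)$. This is an instance of the Frobenius relation $x\,\tau(a) = \tau(\rho^*x\cdot a)$ combined with the known restrictions of $\cwt$ and $\cwd$ recorded in Appendix~\ref{app:ordinarycohomology}. Case (iii) is the mirror image of (i) under the interchange $\omega \leftrightarrow \chi\omega$. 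For (ii), the expansion $k\eoz = k\tau(c) + ke^{-2}\kappa\,\cwd\cxwd$ already reproduces the $e^{-2}\kappa$-part of the explicit formula, leaving only a product identity relating $\cwd\cxwt$ to $\tau(c)$ (up to $\iota$-powers) to verify. For (iv), I would substitute the definition of $\exoz$ and then rewrite $e^2$ using the relation in the cohomology of a point that expresses it as a combination of $\cxwt\cwd$ (or $\cwt\cxwd$) and a class of the form $\tau(\iota^2 c)$; the remaining agreement is a one-line coefficient comparison in the $\tau$-component.

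The main obstacle, such as it is, is purely clerical: one has to keep straight the Frobenius relations together with the multiplicative identities among $\cwt$, $\cxwt$, $\cwd$, $\cxwd$ and the elements $\tau(\cdot)$, $\kappa$, $e$, $\iota$ of the cohomology of a point. All of these are collected in Appendix~\ref{app:ordinarycohomology} and in \cite{CHTFiniteProjSpace}, and once they are in hand the proposition becomes a direct algebraic repackaging of the earlier result. Its only purpose is to isolate $\eoz$ and $\exoz$ as the natural building blocks for the geometric interpretation pursued in the later sections, so I would not expect any conceptual difficulty beyond the bookkeeping just outlined.
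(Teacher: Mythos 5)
Your proposal is correct and follows essentially the same route as the paper: take the first equality in each line from \cite[Proposition 6.5]{CHTFiniteProjSpace}, then expand $\eoz$ and $\exoz$ and match terms using Frobenius and the values of $\rho$ on the generators. One small imprecision in (iv): the relation $\cwt\cxwd - (1-\kappa)\cxwt\cwd = e^2$ that you need to rewrite $e^2$ is a relation in the cohomology of $\Xpq pq$, not in the cohomology of a point, and after substituting it you still need $\kappa = 2 - \tau(1)$ and the identity $\tau(1)\cwt\cxwd = \tau(1)\cxwt\cwd$ (both via Frobenius) to collapse to $\exoz + (k-1)\tau(\iota^2 c)$, so that step is a bit more than a one-line coefficient comparison.
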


\begin{proof}
In each sequence of equalities, the first was proved in \cite[Proposition 6.5]{CHTFiniteProjSpace}, while the second equality can be obtained by combining the definitions of $Q$ and $\chi Q$ with the Frobenius relation $b\tau(a)=\tau(\rho(b)a)$ (equation (\ref{eqn:Frobenius})) and the values
of $\rho$ given in (\ref{eqn:restrictions}) in the appendix.

For example, in (i), we have
\[
    \tau(1)\cwd =\tau(\zeta c) =  \tau(c)\cwt,
\]
which, with the definition of $Q$, gives the claimed equality.
The calculations in (ii) and (iii) are similar.
% For example, in (i), we use the facts that $\rho^*(\cwd)=\zeta c$ and $\rho^*(\cwt)=\zeta$ to get $\tau(1)\cwd =\tau(\zeta c) =  \tau(c)\cwt$.
% Similarly, in (ii), we use the expression for $\rho^*(\cwd)$ we just recalled and the fact that $\rho^*(\cxwt)=\iota^2\zeta^{-1}$. This last equality is used again in (iii), together with the fact that $\rho^*(\cxwd)=\iota^2 \zeta^{-1} c$.

Finally, for (iv), we first need to use one of the relations in the equivariant cohomology of projective space, namely
\begin{align}\label{eqn tensor}
\cwt\cxwd-(1-\kappa)\cxwt\cwd=e^2,
\end{align}
to replace $e^2$. The equality then follows by recalling that $\kappa=2-\tau(1)$  and making use of the equalities $\rho(\xi)=\iota^2$, $\tau(1)\cwt\cxwd=\tau(1)\cxwt\cwd$, and $\xi\cdot e^{-2}\kappa=0$.
\end{proof}

Let us now make more explicit the subdivision of the line bundles over $\Xpq pq$. For every such line bundle  $L$ we say that it is of type $\dagger\in \{\I,\II,\III,\IV\}$ if it is isomorphic to one of the line bundles appearing in the corresponding roman numeral of the previous Proposition. It is also convenient to apply this subdivision to the line bundle summands of $F = L_1\dirsum\cdots\dirsum L_n$. 
Let $\mathcal{F} = \{L_1,\ldots,L_n\}$ and let $\mathcal{F}_\dagger$ be the subset containing the summands of type $\dagger$. We set 
\begin{align*}
n_\dagger&:=|\mathcal{F}_\dagger|\,,\\ 
F_\dagger&:=\bigoplus_{L_i\in \mathcal{F}_\dagger} L_i\text{\quad and }\\
d_\dagger&:=\prod_{L_i\in \mathcal{F}_\dagger}d_i.
\end{align*}
By convention, we set $d_\dagger=1$ whenever $\mathcal{F}_\dagger=\emptyset$.
As direct consequences of these notations one has 
\begin{align*}
n&=n_\I+n_\II+n_\III+n_\IV \text{\quad and\ }\\
d&=d_\I d_\II d_\III d_\IV \,.
\end{align*}

As in \cite{CHTAlgebraic}, our calculations will depend only on the triples of ranks $(n,n_0,n_1)$ and
degrees $(\Delta,\Delta_0,\Delta_1)$, where
\begin{align*}
    n_0 &= n_\I + n_\II \\
    n_1 &= n_\II + n_\III \\
    \Delta &= d \\
    \Delta_0 &= 
        \begin{cases}
            d_\I d_\II & \text{if\ \,$n_0 < p$} \\
            0 & \text{if\,\  $n_0 \geq p$}
        \end{cases} \\
    \Delta_1 &= 
        \begin{cases}
            d_\II d_\III & \text{if\ \,$n_1 < q$} \\
            0 & \text{if\,\  $n_1 \geq q$.}
        \end{cases}
\end{align*}
Here, $n_0$ is the rank of the fixed-point bundle $F^\GG$ restricted to $\Xp p$ while
$\Delta_0$ is its nonequivariant degree when the rank is less than $p$. Similarly,
$n_1$ is the rank of $F^\GG$ restricted to $\Xq q$ and $\Delta_1$ is its nonequivariant degree.
Because $d_\I$ and $d_\III$ are always odd, $\Delta_0$ and $\Delta_1$ have the same parity as $d_\II$;
in particular, they have the same parity, which is even if and only if $n_\II > 0$.

In order for the Euler class to have a meaningful geometric interpretation, we impose the following
conditions, as we did in \cite{CHTAlgebraic}.

\begin{context}\label{context}
With notation as above, we assume that\begin{align*}
    n &< p + q \\
    n - q &\leq n_0 \leq n \\
    \mathllap{\text{and}\qquad} n - p &\leq n_1 \leq n.
\end{align*}
\end{context}

% Finally, in order to simplify some of our formulas, we extend our notation by introducing
% \begin{align*}
%     m_0 &= n - n_0 \\
%     m_1 &= n - n_1 \\
%     \ell &= \lvert n - n_0 - n_1 \rvert.
% \end{align*}
% Note that these depend only on the triple $(n,n_0,n_1)$.
% Our assumptions imply that $0\leq m_0 \leq q$ and $0\leq m_1\leq p$.

We will give two versions of B\'ezout's theorem, one in terms of the values above
and one in terms of the following values.
Thinking about the intersection of the hypersurfaces determined by our line bundles,
these are its nonequivariant affine dimension and the affine dimensions of its fixed sets:
\begin{align*}
    m &= p+q - n \\
    m_0 &= p - n_0 \\
    m_1 &= q - n_1.
\end{align*}
Note that these depend only on the triple $(n,n_0,n_1)$ and the values $p$ and $q$.
The assumptions in context~\ref{context} are equivalent to the inequalities
\begin{align*}
    0 &< m \\
    0&\leq m-m_0 \leq q \\
    \mathllap{\text{and}\qquad} 0 &\leq m-m_1 \leq p.
\end{align*}
We will also need the quantity
\[
\ell=n-n_0-n_1=m_0+m_1-m.
\]
Our version of B\'ezout's theorem breaks into cases depending on whether $\ell$ is positive or negative.

In order to obtain $e(F)$ it is convenient to first identify its factors $e(F_\dagger)$.
We start our work with a few computational lemmas. 
The first lemmas will enable us to rewrite expressions involving powers of $\eoz$.

\begin{lemma}  \label{lem Q powers}
For $k\geq 0$ one has 
\[
    Q^k = 2^{k-1}\Big(\tau(\cd[k])+e^{-2k}\kappa\, \cwd^{\;k}\,\cxwd^{\;k}\Big).
\]
If $k\geq 1$ we may also write this as
\[
    Q^k = 2^{k-1}\tau(\cd[k]) + (e^{-2}\kappa)^k \cwd^{\;k}\,\cxwd^{\;k}.
\]
\end{lemma}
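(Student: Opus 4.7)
My plan is to decompose $Q = A + B$ with $A = \tau(\cd)$ and $B = e^{-2}\kappa\,\cwd\,\cxwd$, to establish that $A\cdot B = 0$, so that the binomial expansion of $Q^k = (A+B)^k$ collapses to $A^k + B^k$, and then to compute each pure power separately.

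The first key step will be the vanishing $A\cdot B = 0$. By the Frobenius relation $b\tau(a) = \tau(\rho(b)a)$ recalled in the appendix, this reduces to showing $\rho(e^{-2}\kappa\,\cwd\,\cxwd) = 0$. Since $\kappa = 2 - \tau(1)$ and $\rho\tau(1) = 2$ for $C_2$, one has $\rho(\kappa) = 0$; equivalently, $e^{-2}\kappa$ lies in an underlying real degree in which the nonequivariant cohomology of a point vanishes. Either way, the restriction of the whole factor is zero, and the product with $\tau(\cd)$ vanishes.

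Next I will compute the two pure powers. For $A^k$, I plan to induct on $k$: by Frobenius, $\tau(\cd)\,\tau(\cd[k]) = \tau\bigl(\rho\tau(\cd)\cdot\cd[k]\bigr) = 2\tau(\cd[k+1])$, yielding $\tau(\cd)^k = 2^{k-1}\tau(\cd[k])$ for $k \geq 1$. For $B^k = e^{-2k}\kappa^k\,\cwd^{\,k}\,\cxwd^{\,k}$, I first establish $\kappa^2 = 2\kappa$ by expanding $(2 - \tau(1))^2$ and using $\tau(1)^2 = \tau(\rho\tau(1)) = 2\tau(1)$, and then inductively $\kappa^k = 2^{k-1}\kappa$ for $k \geq 1$, giving $B^k = 2^{k-1}e^{-2k}\kappa\,\cwd^{\,k}\,\cxwd^{\,k}$.

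Assembling, for $k \geq 1$ I obtain $Q^k = 2^{k-1}\bigl(\tau(\cd[k]) + e^{-2k}\kappa\,\cwd^{\,k}\,\cxwd^{\,k}\bigr)$, which is the first form; rewriting $2^{k-1}e^{-2k}\kappa$ as $(e^{-2}\kappa)^k$ then gives the second. The case $k = 0$ will be checked directly: $Q^0 = 1$, while the right-hand side of the first form is $2^{-1}(\tau(1) + \kappa) = 2^{-1}\cdot 2 = 1$ since $\tau(1) + \kappa = 2$. The step I expect to demand the most care is the vanishing $A\cdot B = 0$, since it rests on the precise interaction of the transfer $\tau$, the inverted Euler class $e^{-2}$, and the class $\kappa$ inside the extended $RO(G)$-graded framework of the appendix; once that is in hand, the rest is a straightforward application of Frobenius reciprocity and the identity $\kappa^2 = 2\kappa$.
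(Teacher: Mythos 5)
Your proposal is correct in outcome and takes essentially the same approach as the paper, just organized as a direct binomial expansion rather than an induction: you decompose $Q = A + B$, show $AB = 0$, and compute $A^k$ and $B^k$ separately, whereas the paper iterates $Q^{k+1} = Q^k\cdot Q$ and uses the vanishing of the cross term at each step. Both rest on exactly the same facts (Frobenius, $\rho(e^{-2j}\kappa) = 0$, $\rho\tau(a) = 2a$).

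One step deserves more care: your computation $B^k = e^{-2k}\kappa^k\,\cwd^{\;k}\cxwd^{\;k}$ implicitly factors $e^{-2}\kappa$ as a literal product $e^{-2}\cdot\kappa$. But $e^{-2}$ is not an element of $\copt$ --- $e$ is not a unit, and the class in degree $-2\sigma$ is the indecomposable generator whose \emph{name} is $e^{-2}\kappa$. So $\kappa^k = 2^{k-1}\kappa$ does not by itself yield $(e^{-2}\kappa)^k = 2^{k-1}e^{-2k}\kappa$. What you need is the ring relation $e^{-j}\kappa\cdot e^{-k}\kappa = 2e^{-j-k}\kappa$ in $\copt$, which the paper cites and which gives $(e^{-2}\kappa)^k = 2^{k-1}e^{-2k}\kappa$ by induction; your check that $\kappa^2 = 2\kappa$ is the $j=k=0$ instance and is a good sanity check, but the general relation is the one to invoke. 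The remaining steps --- $AB = 0$ via Frobenius since $\rho(e^{-2}\kappa\cwd\cxwd)=0$, the collapse $Q^k = A^k + B^k$ for $k\geq 1$, $\tau(\cd)^k = 2^{k-1}\tau(\cd[k])$ by Frobenius, and the $k=0$ base case $\tfrac12(\tau(1)+\kappa)=1$ --- are all correct and mirror what the paper does inside its induction.
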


\begin{proof}
The proof of the first equality is by induction on $k$, with the base case being $k=0$, for which the statement reduces to 
\[
    Q^0=1=\frac{1}{2}\cdot 2=\frac{1}{2}\Big(\tau(1)-\tau(1)+2\Big)=\frac{1}{2}\Big(\tau(1)+\kappa\Big).
\]

For the inductive step, we have
\begin{align*}
    Q^{k+1} &= 2^{k-1}\Big(\tau(\cd[k])+e^{-2k}\kappa \cwd^{\;k}\cxwd^{\;k}\Big) \cdot \Big(\tau(\cd)+e^{-2}\kappa\, \cwd\,\cxwd\Big)\\
    &= 2^{k-1}\Big(\tau(\cd[k])\tau(\cd)+2e^{-2k-2}\kappa\,\cwd^{\;k+1}\,\cxwd^{\;k+1}\Big) \\
    &= 2^k\Big(\tau(\cd[k+1]) + e^{-2(k+1)}\kappa\,\cwd^{\;k+1}\,\cxwd^{\;k+1}\Big).
\end{align*}
Here, we are using the fact that $\rho(e^{-2j}\kappa) = 0$, so the Frobenius relation (\ref{eqn:Frobenius}) implies that
$\tau(a)\cdot e^{-2j}\kappa = 0$ for any $a$. Further, $\rho\tau(a) = 2a$ for any $a$, so the Frobenius relation
implies that $\tau(\cd[k])\tau(\cd) = 2\tau(\cd[k+1])$. 
Finally, we have the relation $e^{-j}\kappa\cdot e^{-k}\kappa = 2e^{-j-k}\kappa$ in the cohomology of a point.

When $k\geq 1$, we have $(e^{-2}\kappa)^k = 2^{k-1}e^{-2k}\kappa$ from the relation just mentioned, hence
the second equality in the statement of the lemma.
\end{proof}

\begin{lemma}\label{lemma Q conversion}
If $i > k \geq 0$, then
\begin{alignat*}{3}
    \textup{ i)}\quad&& \cxwt^{i}\eoz^{k} &= \cxwt^{i-k}\cdot 2^{k}\cxwd^{\;k} & \qquad \text{and} \\
    \textup{ ii)}\quad&& \cwt^i\eoz^k &=\cwt^{i-k} \cdot 2^k \cwd^{\;k}.
\intertext{If $0 < i \leq k$, then}
    \textup{ iii)}\quad&& \cxwt^i\eoz^k &= \cxwt\cdot 2^{i-1}\,\cxwd^{\;i-1}\eoz^{k-i+1} & \qquad\text{and} \\
    \textup{ iv)}\quad&& \cwt^i\eoz^k &= \cwt \cdot 2^{i-1}\,\cwd^{\;i-1}\eoz^{k-i+1}.
\end{alignat*}
\end{lemma}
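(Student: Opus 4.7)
The plan is to derive all four identities by iterating a single \emph{exchange identity} that trades one factor of $\eoz$ for a factor of $2\cxwd$ (respectively $2\cwd$), at the cost of one factor of $\cxwt$ (resp.\ of $\cwt$).

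My first step would be to establish the pair of exchange identities
\[
    \cxwt^2\,\eoz = 2\,\cxwt\,\cxwd
    \qquad\text{and}\qquad
    \cwt^2\,\eoz = 2\,\cwt\,\cwd,
\]
which are exactly the instances $(i,k)=(2,1)$ of (i) and (ii). The natural route is to substitute $\eoz = \tau(c) + e^{-2}\kappa\,\cwd\,\cxwd$ (the $k=1$ case of Lemma~\ref{lem Q powers}), use the Frobenius relation $a\,\tau(b) = \tau(\rho(a)b)$ to reduce the $\tau$-part (which depends on knowing $\rho$ on $\cxwt$ and $\cwt$), and then use the projective-space relation~(\ref{eqn tensor}) together with $\xi\cdot e^{-2}\kappa = 0$ (both already invoked in the proof of Proposition~\ref{prop: base case}) to simplify the $e^{-2}\kappa\,\cwd\,\cxwd$-part.

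With the exchange identity in hand, both regimes $i>k$ and $0<i\leq k$ follow by the same bookkeeping. For $i\geq 2$ and $k\geq 1$ the identity applied to the factorization $\cxwt^i\eoz^k = \cxwt^{i-2}(\cxwt^2\eoz)\eoz^{k-1}$ gives
\[
    \cxwt^i\,\eoz^k = 2\,\cxwt^{i-1}\,\cxwd\,\eoz^{k-1}.
\]
For (i), with $i>k\geq 0$, I would iterate this reduction $k$ times; the strict inequality $i>k$ guarantees that the exponent of $\cxwt$ stays at least $2$ at every intermediate stage, so the exchange step remains applicable and the final answer is $2^k\,\cxwt^{i-k}\,\cxwd^k$. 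For (iii), with $0<i\leq k$, the same reduction is iterated exactly $i-1$ times (each step drops both exponents by one), halting when the exponent of $\cxwt$ has fallen to $1$, and yielding $2^{i-1}\,\cxwt\,\cxwd^{i-1}\,\eoz^{k-i+1}$. The edge cases $k=0$ in (i) and $i=1$ in (iii) are tautologies. Parts (ii) and (iv) are proved by the same two steps with $\cxwt\leftrightarrow\cwt$ and $\cxwd\leftrightarrow\cwd$.

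The main obstacle is the exchange identity itself. Its $\tau$-component is handled by Frobenius, while its $e^{-2}\kappa\,\cwd\,\cxwd$-component requires the projective-space relations; reconciling the two contributions into the clean right-hand side $2\cxwt\cxwd$ is where essentially all the real work sits. Once the exchange identity is secured, the iterations that establish the general statement are purely mechanical.
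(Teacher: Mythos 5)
Your proposal is correct and takes essentially the same route as the paper: the paper's proof also reduces everything to the two exchange identities $\cxwt^2\eoz = 2\cxwt\cxwd$ and $\cwt^2\eoz = 2\cwt\cwd$, proved by expanding $\eoz$, applying Frobenius to the $\tau$-part, and using the relation $\cwt\cxwd = (1-\kappa)\cxwt\cwd + e^2$ together with $\xi\cdot e^{-2}\kappa = 0$ on the other part. You merely make the subsequent induction more explicit than the paper does (it is dismissed there with ``these equations follow by induction''), and your bookkeeping of when the exponent of $\cxwt$ stays $\geq 2$ is accurate.
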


\begin{proof}
These equations follow by induction from the equalities
\begin{align}
    \cxwt^2 Q &= 2\cxwt\cxwd  \rlap{\text{\qquad and}} \label{eqn:zeta02Q} \\
    \cwt^2 Q &= 2\cwt\cwd. \label{eqn:zeta12Q}
\end{align}
To see (\ref{eqn:zeta02Q}), we have
\begin{align*}
    \cxwt^2 Q
    &= \cxwt^2 (\tau(c) + e^{-2}\kappa\cwd\cxwd) \\
    &= \tau(\iota^4\zeta^{-2} c) + e^{-2}\kappa\,\cxwt^2\,\cwd\cxwd \\
    &= \tau(\iota^4\zeta^{-2} c) + e^{-2}\kappa\cxwt\cxwd((1-\kappa)\cwt\cxwd + e^2) \\
    &= \tau(\iota^4\zeta^{-2} c) + (1-\kappa)e^{-2}\kappa \,\xi \,\cxwd^{\;2} + \kappa\,\cxwt\,\cxwd \\
    &= \tau(\iota^4\zeta^{-2} c) + (2- \tau(1))\cxwt\cxwd \\
    &= \tau(\iota^4\zeta^{-2} c) + 2\cxwt\cxwd - \tau(\iota^4\zeta^{-2} c) \\
    &= 2\cxwt\,\cxwd.
\end{align*}
The proof of (\ref{eqn:zeta12Q}) is similar.
\end{proof}

\begin{lemma}\label{lem:xiQ}
$\xi Q^k = 2^{k-1}\tau(\iota^2 c^k)$.
\end{lemma}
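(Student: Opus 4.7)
My plan is to deduce the identity as a direct consequence of Lemma \ref{lem Q powers}, using the same algebraic toolkit already employed in this section: the Frobenius relation $b\,\tau(a) = \tau(\rho(b)\,a)$, the restriction value $\rho(\xi) = \iota^2$, and the vanishing of $\xi$ against $\kappa$-classes in the cohomology of a point.

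First I would substitute the formula from Lemma \ref{lem Q powers}, obtaining
\[
\xi Q^k = 2^{k-1}\,\xi\,\tau(\cd[k]) + 2^{k-1}\,\xi\cdot e^{-2k}\kappa\,\cwd^{\;k}\,\cxwd^{\;k},
\]
and then treat the two summands separately. For the right-hand term, the plan is to invoke $\xi\cdot e^{-2k}\kappa = 0$, which is the natural extension to arbitrary $k$ of the relation $\xi\cdot e^{-2}\kappa = 0$ already cited in the proof of Proposition \ref{prop: base case}(iv); this annihilates the entire summand. For the left-hand term, the Frobenius relation converts $\xi\,\tau(\cd[k])$ into $\tau(\rho(\xi)\,\cd[k]) = \tau(\iota^2 c^k)$, where I use that $\cd$ restricts to the nonequivariant Chern class $c$.

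Combining these two steps yields $\xi Q^k = 2^{k-1}\tau(\iota^2 c^k)$, as required. I do not anticipate any serious obstacle: the argument is essentially a one-line calculation once Lemma \ref{lem Q powers} is applied, and the only mild subtlety is the vanishing $\xi\cdot e^{-2k}\kappa = 0$ for every $k \geq 0$. This last fact should follow from the multiplicative structure of the $RO(\GG)$-graded cohomology of a point recalled in the appendix; if needed, one can deduce it from the $k = 1$ case together with the identity $e^{-2j}\kappa\cdot e^{-2l}\kappa = 2\,e^{-2(j+l)}\kappa$ that appeared in the proof of Lemma \ref{lem Q powers}.
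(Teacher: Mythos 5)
Your proof is correct and follows essentially the same route as the paper: substitute the expression for $Q^k$ from Lemma~\ref{lem Q powers}, kill the $e^{-2k}\kappa$ term using $\xi\cdot e^{-2k}\kappa = 0$, and apply the Frobenius relation with $\rho(\xi)=\iota^2$ to the remaining transfer term. The only slight imprecision is the phrasing that $\cd$ ``restricts to'' $c$ — in this paper $\cd$ and $c$ both already denote the nonequivariant Chern class of $O(1)$, so no restriction is involved — but this does not affect the argument.
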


\begin{proof}
By Lemma~\ref{lem Q powers}, the fact that $\xi\cdot e^{-k}\kappa = 0$, and the Frobenius relation, we have
\[
    \xi Q^k = \xi \cdot 2^{k-1}\Big(\tau(c^k)+e^{-2k}\kappa\, \cwd^{\:k}\,\cxwd^{\:k}\Big)=2^{k-1}\tau\big(\iota^2\,c^k\big).\qedhere
\]
\end{proof}

We also have some special cases in which we can simplify expressions further.

\begin{lemma}\label{lem:simplification}
In the cohomology of $\Xpq pq$,
% \begin{align*}
%     \cxwt \cwd^{\;p-k}Q^{k+1} &= 2\cwd^{\;p-k}\cxwd Q^k &&\text{if $0 \leq k \leq p$, and} \\
%     \cwt \cxwd^{\;q-k}Q^{k+1} &= 2\cwd \cxwd^{\;q-k} Q^k &&\text{if $0\leq k \leq q$.}
% \end{align*}
the class $\cwd^{\;p-k}Q^k$ is infinitely divisible by $\cxwt$ and $\cxwd^{\;q-k}Q^k$ is infinitely divisible by $\cwt$,
and, for $\ell \geq 0$,
\begin{align*}
   Q^i\cdot \cwd^{\;p-k}Q^{k} &= 2^i\,\cxwd^{\;i} \cdot \cxwt^{-i}\,\cwd^{\;p-k} Q^k &&\text{if $0 \leq k \leq p$, and} \\
   Q^i \cdot\cxwd^{\;q-k}Q^{k} &= 2^i\, \cwd^{\;i}\cdot \cwt^{-i}\,\cxwd^{\;q-k} Q^k &&\text{if $0\leq k \leq q$.}
\end{align*}
\end{lemma}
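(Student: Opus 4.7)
The two halves of the lemma are intertwined: the infinite $\cxwt$-divisibility of $\cwd^{\,p-k}Q^k$ is precisely what gives meaning to the symbol $\cxwt^{-i}\cwd^{\,p-k}Q^k$ on the right of the product formula, while the formula itself is then a short consequence of~(\ref{eqn:zeta02Q}). I would therefore treat the two assertions together, establishing divisibility first and then deducing the formula from it.

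For the divisibility, I would expand $\cwd^{\,p-k}Q^k$ using the second form in Lemma~\ref{lem Q powers} (valid for $k\geq 1$; the case $k=0$ reduces to showing that $\cwd^{\,p}$ itself is infinitely $\cxwt$-divisible, a fact built into the presentation of $H^*_G(\Xpq pq)$ recalled in the appendix) to get
\[
    \cwd^{\,p-k}Q^k = 2^{k-1}\cwd^{\,p-k}\tau(c^k) + (e^{-2}\kappa)^k\,\cwd^{\,p}\,\cxwd^{\,k}.
\]
Then I would verify divisibility of each summand separately. The $\tau$-term is handled using the Frobenius relation~(\ref{eqn:Frobenius}) together with the restriction value of $\cxwt$ from the appendix; the second summand reduces, after pulling out the factor $\cxwd^{\,k}$, to the $\cxwt$-divisibility of $\cwd^{\,p}$, which is again an appendix fact coming from the top relation on $\Xpq pq$. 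These steps produce an explicit class to be labelled $\cxwt^{-i}\cwd^{\,p-k}Q^k$ for every $i\geq 0$.

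With the divided classes in hand, I would prove the product formula by induction on $i$, the case $i=0$ being trivial. Iterating the base identity~(\ref{eqn:zeta02Q}) yields $\cxwt^{\,2i}Q^i = 2^i\cxwt^{\,i}\cxwd^{\,i}$, and multiplying both sides by $\cwd^{\,p-k}Q^k$ and rewriting the right-hand side via the divided class gives
\[
    \cxwt^{\,2i}\cdot \cwd^{\,p-k}Q^{k+i} = 2^i\cxwt^{\,2i}\cxwd^{\,i}\cdot \bigl(\cxwt^{-i}\cwd^{\,p-k}Q^k\bigr).
\]
Cancelling $\cxwt^{\,2i}$ on both sides is legitimate precisely because both sides are infinitely $\cxwt$-divisible, so multiplication by $\cxwt$ is injective on the relevant subspace; this yields the first identity. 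The companion statement involving $\cxwd^{\,q-k}Q^k$ and $\cwt$ is proved by exactly the same argument with the roles of the two summands of $\Cpq{p}{q}$ swapped and~(\ref{eqn:zeta12Q}) used in place of~(\ref{eqn:zeta02Q}).

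\textbf{Main obstacle.} The formula itself is a few lines of algebra once one may freely cancel powers of $\cxwt$; the substantive content is the infinite divisibility. Globally $\cxwt$ is neither a unit nor a non-zero-divisor in $H^*_G(\Xpq pq)$, so divisibility cannot be established by a purely formal manipulation — one must really use the presentation of the cohomology reviewed in the appendix, and in particular the relation that forces $\cwd^{\,p}$ into the $\cxwt$-divisible part. This is where I expect to spend most of the effort.
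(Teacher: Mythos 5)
Your proposal is correct and differs from the paper's proof in a genuine and clean way, mainly in the derivation of the product formula. The paper establishes the identity for $i=1$ by an explicit expansion: it writes $\cxwt\,\cwd^{\;p-k}Q^{k+1}$ using Lemma~\ref{lem Q powers}, pushes $\cxwt$ through the $\tau$-term and the $e^{-2(k+1)}\kappa$-term via the Frobenius relation and the relation $\cxwt\cwd = (1-\kappa)\cwt\cxwd + e^2$, simplifies back to $2\,\cwd^{\;p-k}\cxwd\,Q^k$, cancels a single $\cxwt$, and then iterates. You instead observe that (\ref{eqn:zeta02Q}) raised to the $i$-th power gives $\cxwt^{2i}Q^i = 2^i\cxwt^{\;i}\cxwd^{\;i}$ outright, multiply by $\cwd^{\;p-k}Q^k$, and cancel $\cxwt^{2i}$ once; this handles all $i$ in one stroke and avoids repeating the Frobenius/relation manipulations that the paper performs. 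Both proofs rest on the same underlying cancellation principle — that $\cxwt$-multiplication is injective on the subspace of classes that are multiples of $\cwd^{\;p}$ (and of their divided versions) — and you are in fact more explicit about needing this than the paper is. For the divisibility claim, your decomposition into the $\tau$-summand and the $(e^{-2}\kappa)^k\cwd^{\;p}\cxwd^{\;k}$-summand is essentially the same idea as the paper's rewriting of $\cwd^{\;p-k}Q^k$ as $2^{k-1}\cwd^{\;p}(\tau(\zeta^{-k}) + e^{-2k}\kappa\cxwd^{\;k})$; the paper collects both terms into a single multiple of $\cwd^{\;p}$, while you treat them separately, using Frobenius to exhibit the $\tau$-term directly as $\cxwt$ times another $\tau$-class. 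One small caution: the word "infinitely divisible" in this paper (Theorem~\ref{thm:cohomStructure}) includes uniqueness, and, like the paper, your argument only exhibits a divided class; this gap is equally present in both treatments.
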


\begin{proof}
We start with the fact that $\cwd^{\;p-k}Q^k$ is divisible by $\cxwt$, which we know is
true if $k=0$. If $k>0$, then, by Lemma~\ref{lem Q powers}, we have
\begin{align*}
    \cwd^{\;p-k}Q^k &= 2^{k-1}\cwd^{\;p-k}(\tau(c^k) + e^{-2k}\kappa\cwd^{\;k}\cxwd^{\;k}) \\
    &= 2^{k-1}\cwd^{\;p}(\tau(\zeta^{-k}) + e^{-2k}\kappa\cxwd^{\;k})
\end{align*}
which is a multiple of $\cwd^{\;p}$, hence divisible by $\cxwt$.
A similar calculation shows that $\cxwd^{\;q-k}Q^k$ is divisible by $\cwt$.

For the equalities, we first have
\begin{align*}
    \cxwt \cwd^{\;p-k}Q^{k+1}
    &= 2^k \cxwt \cwd^{\;p-k} (\tau(c^{k+1}) + e^{-2(k+1)}\kappa \cwd^{\;k+1}\cxwd^{\;k+1}) \\
    &= 2^k(\cwd^{\;p-k}\cxwd\tau(c^k) + e^{-2(k+1)}\kappa\cwd^{\;p}\cxwd^{\;k+1}\cdot \cxwt\cwd) \\
    &= 2^k\Bigl(\cwd^{\;p-k}\cxwd\tau(c^k) \\
    &\qquad\qquad + e^{-2(k+1)}\kappa\cwd^{\;p}\cxwd^{\;k+1}((1-\kappa)\cwt\cxwd + e^2) \Bigr) \\
    &= 2^k\Bigl(\cwd^{\;p-k}\cxwd\tau(c^k) \\
    &\qquad\qquad - e^{-2(k+1)}\kappa\xi\cxwt^{-1}\cwd^{\;p}\cxwd^{\;k+2} + e^{-2k}\kappa \cwd^{\;p}\cxwd^{\;k+1} \Bigr) \\
    &= 2^k(\cwd^{\;p-k}\cxwd\tau(c^k) + e^{-2k}\kappa \cwd^{\;p}\cxwd^{\;k+1}) \\
    &= 2\cwd^{\;p-k}\cxwd \cdot 2^{k-1}(\tau(c^k) + e^{-2k}\kappa\cwd^{\;k}\cxwd^{\;k}) \\
    &= 2\cwd^{\;p-k}\cxwd Q^k.
\end{align*}
From this it follows that $Q\cdot \cwd^{\;p-k}Q^{k} = 2\cxwd \cdot \cxwt^{-1}\,\cwd^{\;p-k} Q^k$
and the statement for $i > 1$ follows by iteration.
The second equality follows similarly.
\end{proof}

The next lemma will allow us to deal with powers of $\exoz$, for which we need some notation: 
for $k\in\ZZ$ an integer, let us denote by $\rem{k}\in\{0,1\}$ the remainder of the division of $k$ by 2 and by $\beta(k)$ the number of $1$s appearing in the binary representation of $k$. 

\begin{lemma} \label{lemma XQ powers}
For $k \in \NN$ one has
$$\exoz^k=\sum_{j=0}^{k}\rem{\binom{k}{j}}\:(\cxwt \cwd)^j(\cwt \cxwd)^{k-j}+\frac{2^{k}-2^{\beta(k)}}{2}\tau(\iota^{2k}c^k).$$
%$$\exoz^k=\sum_{i=0}^{k}\rem{\binom{k}{i}}\:(\cxwt \cwd)^i(\cwt \cxwd)^{k-i}+\frac{2^{k}-2^{\beta(k)}}{2}\frac{Q^{k}}{2^{k-1}}\cxwt \cwt.$$
\end{lemma}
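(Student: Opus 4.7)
The plan is to expand $\exoz^k = (\cxwt\cwd + \cwt\cxwd)^k$ via the binomial theorem and regroup the terms by the parity of each binomial coefficient. Writing $a = \cxwt\cwd$ and $b = \cwt\cxwd$, and using the decomposition $\binom{k}{j} = \rem{\binom{k}{j}} + 2\lfloor\binom{k}{j}/2\rfloor$, we get
$$\exoz^k = \sum_{j=0}^k\rem{\binom{k}{j}}\,a^j b^{k-j} + \sum_{j=0}^k\frac{\binom{k}{j}-\rem{\binom{k}{j}}}{2}\,\bigl(2a^j b^{k-j}\bigr).$$
The first sum already matches the leading term of the desired formula, so everything reduces to the key identity
$$2\,a^j b^{k-j} = \tau(\iota^{2k}c^k)\qquad\text{for every $0\le j\le k$.}$$

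I would prove this key identity by induction on $k$. For the base case $k=1$, Lemma~\ref{lem:xiQ} gives $\xi\,\eoz = \tau(\iota^2 c)$; expanding $\xi\,\eoz$ via the definition $\eoz = \tau(c) + e^{-2}\kappa\cwd\cxwd$ and using the Frobenius relation together with $\xi\kappa = 0$ shows $\xi\,\eoz = \tau(1)\,a = \tau(1)\,b$, hence $\tau(1)a = \tau(1)b = \tau(\iota^2 c)$. Writing $2 = \tau(1) + \kappa$, one then invokes the relation $\cwt\cxwd - (1-\kappa)\cxwt\cwd = e^2$ from~(\ref{eqn tensor}) together with Proposition~\ref{prop: base case}(iv) to see that the $\kappa$-part of $2a$ (and of $2b$) vanishes, giving the base identity. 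For the inductive step, assuming the identity at level $k$, the Frobenius relation yields
$$2\,a^{j+1}b^{k-j} = a\cdot\tau(\iota^{2k}c^k) = \tau(\rho(a)\,\iota^{2k}c^k) = \tau(\iota^{2(k+1)}c^{k+1})$$
and similarly $2\,a^j b^{k+1-j} = \tau(\iota^{2(k+1)}c^{k+1})$, using that $\rho(a) = \rho(b) = \iota^2 c$; between the two multiplication patterns every split at level $k+1$ is produced.

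With the key identity in hand, the second sum collapses to $\big(\sum_j\tfrac{\binom{k}{j}-\rem{\binom{k}{j}}}{2}\big)\tau(\iota^{2k}c^k) = \tfrac{2^k - 2^{\beta(k)}}{2}\,\tau(\iota^{2k}c^k)$, since $\sum_j\binom{k}{j} = 2^k$ and the number of odd entries in row $k$ of Pascal's triangle is $2^{\beta(k)}$, a classical consequence of Lucas's theorem.

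The main obstacle is the base case of the key identity: passing from $\tau(1)\,a = \tau(\iota^2 c)$ to $2a = \tau(\iota^2 c)$ requires a delicate use of the cohomological relations of $\Xpq pq$ to dispose of the residual $\kappa$-part. Once the base is secured, the Frobenius relation propagates the identity to all pairs $(j,k)$, and the Lucas count finishes the argument.
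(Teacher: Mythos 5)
Your overall structure (binomial expansion, split by parity, apply the Fine/Lucas count for the number of odd entries in row $k$ of Pascal's triangle) is the same as the paper's, and your inductive step via the Frobenius relation with $\rho(\cxwt\cwd)=\rho(\cwt\cxwd)=\iota^2 c$ is sound. However, the stated ``key identity'' $2(\cxwt\cwd)^j(\cwt\cxwd)^{k-j}=\tau(\iota^{2k}c^k)$ is \emph{false} at the endpoints $j=0$ and $j=k$, and your proposed base case $k=1$ is wrong. Indeed, $(\cxwt\cwd)^\GG=(0,1)$, so $(2\cxwt\cwd)^\GG=(0,2)\ne 0=\tau(\iota^2 c)^\GG$; alternatively, $\cxwt\cwd+\cwt\cxwd=\chi Q=\tau(\iota^2 c)+e^2$, so if both $2\cxwt\cwd$ and $2\cwt\cxwd$ equalled $\tau(\iota^2c)$ we would get $2e^2=0$, which fails since the cohomology of $\Xpq pq$ is free over $\copt$. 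The step where you assert ``the $\kappa$-part of $2a$ vanishes'' is precisely the unproved claim $\kappa\cxwt\cwd=0$, which is equivalent to what you are trying to prove and is false.

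The fix, which is also what the paper does, is to observe that the endpoint terms never appear: $\binom k 0-\overline{\binom k0}=\binom kk-\overline{\binom kk}=0$, so the second sum runs only over $1\le j\le k-1$. For those $j$ one has both $\cxwt$ and $\cwt$ present, so $(\cxwt\cwd)^j(\cwt\cxwd)^{k-j}$ is divisible by $\xi=\cxwt\cwt$, and one uses $2\xi=\tau(\iota^2)$ together with Frobenius in a single step to get $\tau(\iota^{2k}c^k)$ directly. If you prefer your inductive framing, the correct range is $1\le j\le k-1$, with base case $k=2$, $j=1$: $2\cxwt\cwd\cwt\cxwd=2\xi\cwd\cxwd=\tau(\iota^2)\cwd\cxwd=\tau(\iota^4 c^2)$. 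As written, your argument contains a false intermediate claim and a circular base case, so it is not a valid proof, even though the final formula is correct.
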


\begin{proof}
Since the statement is trivial for $k=0$, we can assume $k\geq 1$. 
\begin{align*}
\exoz^k &= (\cxwt\cwd + \cwt\cxwd)^k \\
%%%%%%%%%%%%%%%%%%%%%%%%%%%%%%%%%%%%%%
&=\sum_{j=0}^{k}\binom{k}{j}(\cxwt \cwd)^j(\cwt \cxwd)^{k-j} \\
%%%%%%%%%%%%%%%%%%%%%%%%%%%%%%%%%%%%%%
&=\sum_{j=0}^{k}\rem{\binom{k}{j}}(\cxwt \cwd)^j(\cwt \cxwd)^{k-j}+
\sum_{j=0}^{k}\left[\binom{k}{j}-\rem{\binom{k}{j}}\right](\cxwt \cwd)^j(\cwt \cxwd)^{k-j}\\
%%%%%%%%%%%%%%%%%%%%%%%%%%%%%%%%%%%%%%
&=\sum_{j=0}^{k}\rem{\binom{k}{j}}(\cxwt \cwd)^j(\cwt \cxwd)^{k-j}+
\sum_{j=1}^{k-1}\left[\binom{k}{j}-\rem{\binom{k}{j}}\right](\cxwt \cwd)^j(\cwt \cxwd)^{k-j}\\
%%%%%%%%%%%%%%%%%%%%%%%%%%%%%%%%%%%%%%
&=\sum_{j=0}^{k}\rem{\binom{k}{j}}(\cxwt \cwd)^j(\cwt \cxwd)^{k-j}+
2\xi\sum_{j=1}^{k-1}\frac{\binom{k}{j}-\rem{\binom{k}{j}}}2\cxwt^{j-1} \cwd^j\cwt^{k-j-1} \cxwd^{k-j}\\
%%%%%%%%%%%%%%%%%%%%%%%%%%%%%%%%%%%%%%
&\stackrel{(*)}=\sum_{j=0}^{k}\rem{\binom{k}{j}}(\cxwt \cwd)^{\:j}(\cwt \cxwd)^{\:k-j}+
\tau(\iota^2)\sum_{j=1}^{k-1}\frac{\binom{k}{j}-\rem{\binom{k}{j}}}2\cxwt^{j-1} \cwd^{\:j}\cwt^{k-j-1} \cxwd^{\:k-j}\\
%%%%%%%%%%%%%%%%%%%%%%%%%%%%%%%%%%%%%%
&=\sum_{j=0}^{k}\rem{\binom{k}{j}}(\cxwt \cwd)^j(\cwt \cxwd)^{k-j}
+
\sum_{j=1}^{k-1}\frac{\binom{k}{j}-\rem{\binom{k}{j}}}2\tau(\iota^{2k}c^k)\\
%%%%%%%%%%%%%%%%%%%%%%%%%%%%%%%%%%%%%%
&=\sum_{j=0}^{k}\rem{\binom{k}{j}}(\cxwt \cwd)^j(\cwt \cxwd)^{k-j}
+
\frac{2^k-2^{\beta(k)}}2\tau(\iota^{2k}c^k)%\\
\end{align*}
In $(*)$ we used that $2\xi=\tau(\iota^2)$, while the last equality follows from \cite[Theorem 2]{BinomialFine}. The other steps are just algebraic manipulations and, in the penultimate step, an application of the Frobenius relation.
\end{proof}

The final lemma we need simplifies powers of $Q\cdot \chi Q$.

\begin{lemma}\label{lem:Q xQ}
If $k\geq 1$, then
$(Q\cdot\chi Q)^k = 2^{k-1}(2^k-1)\tau(\iota^{2k} c^{2k}) + 2^k\cwd^{\;k}\cxwd^{\;k}$.
\end{lemma}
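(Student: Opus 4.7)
The plan is to prove the formula by induction on $k\geq 1$, with the base case handled by direct expansion using the relations already established in this section, and the inductive step reducing to elementary distribution.

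\emph{Base case} ($k=1$). We must verify $Q\cdot\chi Q = \tau(\iota^2 c^2) + 2\cwd\cxwd$. Expanding,
\[
Q\cdot\chi Q = \tau(c)\cdot\chi Q + e^{-2}\kappa\,\cwd\cxwd\cdot\chi Q.
\]
For the first summand, Frobenius together with the restriction values from (\ref{eqn:restrictions}) (which give $\rho(\cxwt\cwd) = \rho(\cwt\cxwd) = \iota^2 c$) produces $2\tau(\iota^2 c^2)$. For the second summand, rewrite $\chi Q = \tau(1)\cxwt\cwd + e^2$ via the tensor relation (\ref{eqn tensor}) combined with $\kappa = 2 - \tau(1)$; the $\tau(1)\cxwt\cwd$ piece is then killed by the $e^{-2}\kappa$ factor through Frobenius and $\rho(e^{-2}\kappa) = 0$, while the remaining $e^{-2}\kappa\cwd\cxwd\cdot e^2 = \kappa\cwd\cxwd$ unfolds as $2\cwd\cxwd - \tau(\iota^2 c^2)$ (using $\rho(\cwd\cxwd) = \iota^2 c^2$). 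Summing the two contributions yields the claimed identity.

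\emph{Inductive step}. Assuming the formula at level $k$, distribute
\[
(Q\chi Q)^{k+1} = \bigl[2^{k-1}(2^k-1)\tau(\iota^{2k}c^{2k}) + 2^k\cwd^{\;k}\cxwd^{\;k}\bigr]\bigl[\tau(\iota^2 c^2) + 2\cwd\cxwd\bigr].
\]
The $\tau\cdot\tau$ product simplifies via $\tau(a)\tau(b) = \tau(2ab)$ (a direct consequence of $\rho\tau = 2\cdot\id$ and Frobenius) to $2^k(2^k-1)\tau(\iota^{2(k+1)}c^{2(k+1)})$. The two cross terms evaluate by Frobenius with $\rho(\cwd^{\;j}\cxwd^{\;j}) = \iota^{2j} c^{2j}$, contributing $2^k(2^k-1)$ and $2^k$ copies of $\tau(\iota^{2(k+1)}c^{2(k+1)})$ respectively. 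The pure product is $2^{k+1}\cwd^{\;k+1}\cxwd^{\;k+1}$. The total coefficient of $\tau(\iota^{2(k+1)}c^{2(k+1)})$ is $2^k\bigl[(2^k-1)+(2^k-1)+1\bigr] = 2^k(2^{k+1}-1)$, which matches the formula at level $k+1$.

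The only delicate point is the base case, where both the tensor relation (\ref{eqn tensor}) and the precise restriction data from the appendix must be tracked carefully to land on the exact $\iota^2$ factors. Given those, the inductive step collapses to the arithmetic identity $2(2^k - 1) + 1 = 2^{k+1} - 1$.
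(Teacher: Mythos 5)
Your proof is correct and follows essentially the same route as the paper: the substance in both is the base-case computation $Q\cdot\chi Q = \tau(\iota^2 c^2) + 2\cwd\cxwd$ (the paper rewrites $\chi Q$ as $\tau(\iota^2 c)+e^2$ where you rewrite it as $\tau(1)\cxwt\cwd + e^2$, but these are the same identity via Frobenius), after which the paper raises to the $k$-th power by direct binomial expansion while you do it by induction --- a purely stylistic difference.
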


\begin{proof}
We use the relation $\chi Q = \cxwt\cwd + \cwt\cxwd = \tau(\iota^2 c) + e^2$
and recall that $\kappa = 2 - \tau(1)$:
\begin{align*}
    Q\cdot \chi Q
    &= \big(\tau(c) + e^{-2}\kappa\cwd\cxwd\big)\big(\cxwt\cwd + \cwt\cxwd\big) \\
    &= \big(\tau(c) + e^{-2}\kappa\cwd\cxwd\big)\big(\tau(\iota^2 c) + e^2\big) \\
    &= 2\tau(\iota^2 c^2) + \kappa \cwd\cxwd \\
    &= 2\tau(\iota^2 c^2) + 2\cwd\cxwd - \tau(\iota^2 c^2) \\
    &= \tau(\iota^2 c^2) + 2\cwd\cxwd.
\end{align*}
Here we used the Frobenius relation to conclude that $\tau(c)e^2$ and  $e^{-2}\kappa \cdot \tau(\iota^2c)$ both vanish. This gives
\begin{align*}
    (Q\cdot \chi Q)^k 
    &= \bigl(\tau(\iota^2 c^2) + 2\cwd\cxwd\bigr)^k \\
    &= \sum_{j=0}^{k-1}\binom{k}{j}\tau(\iota^2 c^2)^{k-j}\cdot 2^j\cwd^{\;j}\cxwd^{\;j} + 2^k\cwd^{\;k}\cxwd^{\;k} \\
    &= \sum_{j=0}^{k-1}\binom{k}{j}2^{k-j-1}2^j\tau(\iota^{2(k-j)}c^{2(k-j)})\cwd^{\;j}\cxwd^{\;j} + 2^k\cwd^{\;k}\cxwd^{\;k} \\
    &= 2^{k-1}\sum_{j=0}^{k-1}\binom{k}{j}\tau(\iota^{2k}c^{2k}) + 2^k\cwd^{\;k}\cxwd^{\;k} \\
    &= 2^{k-1}(2^k-1)\tau(\iota^{2k} c^{2k}) + 2^k\cwd^{\;k}\cxwd^{\;k}. \qedhere
\end{align*}
\end{proof}

We now start to compute Euler classes.
The following proposition is essentially a restatement of the lemmas leading to the proof of \cite[Proposition 6.12]{CHTFiniteProjSpace}.
We use the notation discussed before Context~\ref{context}.
% We are now in the position to compute the Euler classes of the different components of $F$ and it is worth pointing out that the following proposition is essentially a restatement of the lemmas leading to the proof of \cite[Proposition 6.12]{CHTFiniteProjSpace}.

\begin{proposition}\label{prop I-IV}
\begin{align*}
\textup{ i)}&&  e(F_\I)&%=\cwd^{\:n_\I}+\frac{d_\I-1}{2^{n_\I}}\cwt^{n_\I}\eoz^{n_\I}
=\cwd^{\:n_\I}+\frac{d_\I-1}{2}\cwd^{\:n_\I-1}\cwt \eoz;\\
%%%%%%%%%%%%%%%%%%%%%%%%%%%%%%%%%%%
\textup{ ii)}&& e(F_\II)&=\frac{d_\II}{2^{n_\II}}\eoz^{n_\II} ;\\%=\frac{d_\II}2\Big(\tau(c^m)+e^{-2m}\kappa \cwd^{\:m}\cxwd^{\:m}\Big) \\
%%%%%%%%%%%%%%%%%%%%%%%%%%%%%%%%%%%
\textup{ iii)}&&  e(F_\III)&%=\cxwd^{\:n_\III}+\frac{d_\III-1}{2^{n_\III}}\cxwt^{n_\III}\eoz^{n_\III}
=\cxwd^{\:n_\III}+\frac{d_\III-1}{2}\,\cxwd^{\:n_\III-1}\cxwt\eoz;\\
%%%%%%%%%%%%%%%%%%%%%%%%%%%%%%%%%%
\textup{ iv)}&&  e(F_\IV)&=\exoz^{n_\IV}+\frac{d_{\IV}-2^{n_\IV}}{2}\tau\big(\iota^{2n_\IV}c^{n_\IV}\big)\\
%%%%%%%%%%%%%%%%%%%%%%%%%%%%%%%%%%%
&& &=\sum_{j=0}^{n_\IV}\rem{\binom{n_\IV}{j}}\:(\cxwt \cwd)^j(\cwt \cxwd)^{n_\IV-j}
+
\frac{d_{\IV}-2^{\beta(n_\IV)}}{2}\tau\big(\iota^{2n_\IV}c^{n_\IV}\big).
%&=\sum_{i=0}^{n_\IV}\rem{\binom{n_\IV}{i}}\:(\cxwt \cwd)^i(\cwt \cxwd)^{n_\IV-i}+\frac{d_{\IV}-2^{n_\IV}+2^{\beta(n_\IV)-1}}{2}\frac{Q^{n_\IV}}{2^{n_\IV-1}}\cxwt \cwt.
\end{align*}
\end{proposition}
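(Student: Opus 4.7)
The plan is to prove each of (i)--(iv) by multiplicativity of the Euler class, $e(F_\dagger)=\prod_{L_i\in\mathcal{F}_\dagger}e(L_i)$, substituting the closed form provided by Proposition~\ref{prop: base case} for each factor, expanding, and then collapsing the result using the computational lemmas proved earlier in the section. Case (ii) is immediate: each factor equals $k_i\eoz$ with $d_i=2k_i$, so the product is $\bigl(\prod_i k_i\bigr)\eoz^{\,n_\II}=(d_\II/2^{\,n_\II})\,\eoz^{\,n_\II}$.

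For case (i), I would write $d_i=2k_i+1$ and expand
\[
    \prod_{i=1}^{n_\I}(\cwd+k_i\cwt\eoz)=\sum_{j=0}^{n_\I}e_j(k_1,\ldots,k_{n_\I})\,\cwd^{\,n_\I-j}\cwt^{\,j}\eoz^{\,j},
\]
where $e_j$ denotes the $j$th elementary symmetric polynomial. The $j=0$ contribution supplies the leading $\cwd^{\,n_\I}$. For $j\geq 1$, Lemma~\ref{lemma Q conversion}(iv) taken with $i=k=j$ gives $\cwt^{\,j}\eoz^{\,j}=2^{j-1}\cwt\cwd^{\,j-1}\eoz$, so every remaining term collapses to a multiple of $\cwt\cwd^{\,n_\I-1}\eoz$. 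The remaining scalar is $\sum_{j=1}^{n_\I}2^{j-1}e_j(k_1,\ldots,k_{n_\I})=\tfrac12\bigl(\prod_i(1+2k_i)-1\bigr)=\tfrac12(d_\I-1)$, which yields (i). Case (iii) is obtained by the verbatim argument with $\cwd,\cwt$ replaced by $\cxwd,\cxwt$ and part (iii) of Lemma~\ref{lemma Q conversion} used in place of (iv).

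Case (iv) is the heart of the proof. Setting $a_i=k_i-1$, the expansion reads
\[
    e(F_\IV)=\sum_{s=0}^{n_\IV}e_s(a_1,\ldots,a_{n_\IV})\,\exoz^{\,n_\IV-s}\,\tau(\iota^2c)^{\,s}.
\]
The key step is the uniform-collapse identity $\exoz^{\,n_\IV-s}\tau(\iota^2c)^{\,s}=2^{\,n_\IV-1}\tau(\iota^{2n_\IV}c^{\,n_\IV})$ for every $s\geq 1$. This would follow by combining three ingredients: the decomposition $\exoz=\tau(\iota^2c)+e^2$ (established at the start of the proof of Lemma~\ref{lem:Q xQ}), the vanishing $e^2\cdot\tau(\,\cdot\,)=0$ coming from $\rho(e^2)=0$ and Frobenius, and the Frobenius doubling $\tau(\iota^2c)\tau(\iota^{2b}c^b)=2\tau(\iota^{2(b+1)}c^{b+1})$. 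These yield $\tau(\iota^2c)^{s}=2^{s-1}\tau(\iota^{2s}c^s)$ by iteration and $\exoz^{a}\tau(\iota^{2b}c^b)=2^{a}\tau(\iota^{2(a+b)}c^{a+b})$ for $b\geq 1$ by induction on $a$; multiplying the two powers of~$2$ always gives $2^{n_\IV-1}$, independent of $s$. Summing telescopes $\sum_{s\geq 1}e_s(a_1,\ldots,a_{n_\IV})=\prod_i k_i-1=d_\IV/2^{n_\IV}-1$, producing the first form of (iv). For the second form I would substitute the expansion of $\exoz^{n_\IV}$ from Lemma~\ref{lemma XQ powers} and combine the two $\tau(\iota^{2n_\IV}c^{n_\IV})$ contributions, using $\tfrac{d_\IV-2^{n_\IV}}{2}+\tfrac{2^{n_\IV}-2^{\beta(n_\IV)}}{2}=\tfrac{d_\IV-2^{\beta(n_\IV)}}{2}$.

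The main obstacle I expect is precisely the uniform-collapse identity in case (iv): one has to verify carefully that the Frobenius doubling on $\tau(\iota^2c)^s$ and the repeated absorption $\exoz\cdot\tau(\iota^{2b}c^b)=2\tau(\iota^{2(b+1)}c^{b+1})$ conspire to give the same coefficient $2^{n_\IV-1}$ for every $s\geq 1$, so that the sum over $s\geq 1$ truly reduces to a single elementary symmetric expression in the $k_i$. Once that bookkeeping is done, the passage between the two forms of (iv) and all of (i)--(iii) are essentially mechanical.
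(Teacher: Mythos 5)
Your proposal is correct, and it takes a genuinely different route from the paper. The paper proves parts (i), (iii), and (iv) by induction on $n_\dagger$, multiplying the inductive expression for $e(\widehat F_\dagger)$ by $e(L_{n_\dagger})$ and simplifying at each step; you instead expand the full product $\prod_i e(L_i)$ in one pass via elementary symmetric polynomials and collapse everything at once. For (i) and (iii) the collapse is term-by-term through Lemma~\ref{lemma Q conversion}, and the coefficient $(d_\dagger-1)/2$ emerges directly from the identity $\prod_i(1+2k_i)=d_\dagger$. For (iv) the load-bearing new step is your uniform-collapse identity $\exoz^{\,n_\IV-s}\tau(\iota^2c)^s=2^{\,n_\IV-1}\tau(\iota^{2n_\IV}c^{n_\IV})$ for $s\geq 1$; this is not stated in the paper but follows cleanly from $\exoz=\tau(\iota^2c)+e^2$, $e^2\tau(-)=0$, and Frobenius doubling, and it produces the coefficient $(d_\IV-2^{n_\IV})/2$ transparently via $\sum_{s\geq 1}e_s(a_1,\ldots,a_{n_\IV})=\prod_i k_i-1=d_\IV/2^{n_\IV}-1$. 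Both arguments rest on the same Frobenius machinery; yours trades the paper's mechanical induction for a closed-form expansion that makes the combinatorial origin of the coefficients visible at a glance.
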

\begin{proof}
(i) We proceed by induction, with the case $n_\I = 0$ being true because $d_\I = 1$ in that case.
%, with the basis of the induction $n_\I=1$ being covered by part $i)$ of Proposition \ref{prop: base case}.

Inductive step. We assume the statement to be true for $n_\I-1$ and set $\widehat{F}_\I=L_1\oplus\cdots\oplus L_{n_\I-1}$ and $\widehat{d}_\I:=\prod_{i=1}^{n_\I-1}d_i$. Using part (i) of Proposition~\ref{prop: base case}, we have
\begin{align*}
e(F_\I)&=e(\widehat{F}_\I)\cdot e(L_{n_\I})\\
%\prod_{i=1}^{n_\I}\Big(\cwd+\frac{d_i-1}2\cwt\eoz\Big)=\prod_{i=1}^{n_\I-1}\Big(\cwd+\frac{d_i-1}2\cwt\eoz\Big)\cdot\Big(\cwd+\frac{d_{n_\I}-1}2\cwt\eoz\Big)\\
&=\Big(\cwd^{\:n_\I-1}+\frac{\widehat{d}_\I -1}{2}\cwd^{\:n_\I-2}\cwt\eoz\Big)\cdot\Big(\cwd+\frac{d_{n_\I}-1}2\cwt\eoz\Big)\\
&=\cwd^{\:n_\I}+\frac{\widehat{d}_\I-1}{2}\cwd^{\:n_\I-1}\cwt\eoz+ \frac{d_{n_\I}-1}2\cwd^{\:n_\I-1}\cwt\eoz+\frac{d_\I-d_{n_\I}-\widehat{d}_\I+1}{4}\cwd^{\:n_\I-2}\cwt^{2}\eoz^{2}\\
&=\cwd^{\:n_\I}+\frac{\widehat{d}_\I-1}{2}\cwd^{\:n_\I-1}\cwt\eoz+ \frac{d_{n_\I}-1}2\cwd^{\:n_\I-1}\cwt\eoz+\frac{d_\I-d_{n_\I}-\widehat{d}_\I+1}{2}\cwd^{\:n_\I-1}\cwt\eoz\\
%&=\cwd^{\:n_\I}+\frac{\widehat{d_\I}-1}{2^{n_\I-1}\cdot 2}2\cwd\cwt^{n_\I-1}\eoz^{n_\I-1}+ \frac{d_{n_\I}-1}{2\cdot 2^{n_\I-1} }2^{n_\I-1}\cwd^{\:n_\I-1}\cwt\eoz+\frac{d_\I-d_{n_I}-\widehat{d_\I}+1}{2^{n_\I}}\cwt^{n_I}\eoz^{n_\I}\\
%&=\cwd^{\:n_\I}+\frac{\widehat{d_\I}-1}{2^{n_\I}}\cwt^{n_\I}\eoz^{n_\I}+ \frac{d_{n_\I}-1}{2^{n_\I} }\cwt^{n_\I}\eoz^{n_\I}+\frac{d_\I-d_{n_I}-\widehat{d_\I}+1}{2^{n_\I}}\cwt^{n_I}\eoz^{n_\I}\\
&=\cwd^{\:n_\I}+\left(\frac{\widehat{d}_\I-1}{2}+ \frac{d_{n_\I}-1}{2}+\frac{d_\I-d_{n_\I}-\widehat{d}_\I+1}{2}\right)\cwd^{\:n_\I-1}\cwt\eoz\\
&=\cwd^{\:n_\I}+\left(\frac{d_\I-1}{2}\right)
\cwd^{\:n_\I-1}\cwt\eoz
\end{align*}
With the exception of the usage of the inductive hypothesis in the second step and of part (iv) of Lemma \ref{lemma Q conversion} in the fourth, the remaining manipulations are elementary.

(ii) This is a straightforward consequence of part (ii) of Proposition \ref{prop: base case}.

(iii) The proof is similar to that of (i), or can be reduced to (i) by applying $\chi$.

(iv) The second equality is a straightforward consequence of Lemma \ref{lemma XQ powers}. We prove the first equality by induction on $n_{\IV}$. 
The base case is $n_\IV = 0$, which is true because $d_\IV = 1$ in that case.
%The base case of the induction is $n_\IV=1$ and is covered by part $iv)$ of Proposition \ref{prop: base case}. 
For the inductive step, We use \textit{mutatis mutandis} the notations introduced in part (i).
\begin{align*}
%\phantom{a} \hspace{-2.5 cm}
e(F_\IV)&=e(\widehat{F}_\IV)\cdot e(L_{n_\IV}) \\
&=\left[\exoz^{n_\IV-1}+\frac{\widehat{d}_\IV-2^{n_{\IV}-1}}2\tau\Big(\iota^{2n_\IV-2}\,c^{n_\IV-1}\Big)\right]
\left[\exoz+\frac{d_{n_\IV}-2}2\tau\big(\iota^2\,c\big)\right]\\
&=\exoz^{n_\IV}+
\exoz^{n_\IV-1}\cdot \frac{d_{n_\IV}-2}2\cdot \tau\big(\iota^2c\big)\\
&\qquad {}+
\exoz\cdot \frac{\widehat{d}_\IV-2^{n_{\IV}-1}}2\cdot \tau\Big(\iota^{2n_\IV-2}\,c^{n_\IV-1}\Big)\\
&\qquad {}+
\frac{d_\IV-2\widehat{d}_\IV-2^{n_{\IV}-1}d_{n_\IV}+2^{n_\IV}}4\cdot \tau\Big(\iota^{2n_\IV-2}\,c^{n_\IV-1}\Big)\cdot \tau\big(\iota^2\,c\big)\\
%%%%%%%%%%%%%%%%%%%%%%%%%%%%%%%%%%%%%%%%
%\stackrel{(*)}
&=\exoz^{n_\IV}+
\Big(\cxwt \cwd+\cwt \cxwd\Big)^{n_\IV-1}\cdot \frac{d_{n_\IV}-2}2\cdot \tau\big(\iota^2c\big) \\
&\qquad{} +
\Big(\cxwt \cwd+\cwt \cxwd\Big)\cdot \frac{\widehat{d}_\IV-2^{n_{\IV}-1}}2\cdot \tau\Big(\iota^{2n_\IV-2}\,c^{n_\IV-1}\Big)\\
%\vspace{0.2 cm}
&\qquad {}+
\frac{d_\IV-2\widehat{ d}_\IV-2^{n_{\IV}-1}d_{n_\IV}+2^{n_\IV}}4\cdot 2\tau\Big(\iota^{2n_\IV}\,c^{n_\IV}\Big)
%\end{align*}
% In the last step, after making use of the Frobenius relation to reduce to 1 the arguments of $  \tau$ in the last summand, we employed (\ref{eqn iota}). To finish the proof we now have to perform some basic algebraic manipulation and apply the Frobenius relation to the second and third summands. 
% {\color{red}No need to expand out the powers of $\cxwt \cwd+\cwt \cxwd$. We can use the Frobenius relation right away, together
% with $\rho(\cxwt \cwd+\cwt \cxwd)^k = (2\iota c)^k$.}
\intertext{We now collect all of the terms involving $\tau$, using the Frobenius relation and the fact that
$\rho(\cxwt \cwd+\cwt \cxwd)^k = (2\iota^2 c)^k$:}
e(F_\IV) 
&=\exoz^{n_\IV}+
\frac{d_{n_\IV}-2}2 \cdot 2^{n_{\IV}-1}\cdot\tau\Big(\iota^{2n_\IV}\,c^{n_\IV}\Big) \\
&\qquad{} +
\frac{\widehat{d}_\IV-2^{n_{\IV}-1}}2\cdot 2\tau\Big(\iota^{2n_\IV}\,c^{n_\IV}\Big)\\
&\qquad {}+
\frac{d_\IV-2\widehat{ d}_\IV-2^{n_{\IV}-1}d_{n_\IV}+2^{n_\IV}}{2}\cdot \tau\Big(\iota^{2n_\IV}\,c^{n_\IV}\Big) \\
&=\exoz^{n_\IV}+
    \frac{d_\IV-2^{n_{\IV}}}2 \cdot \tau\Big(\iota^{2n_\IV}\,c^{n_\IV}\Big)\qedhere
\end{align*}
\end{proof}

We now want to multiply these expressions together to find the Euler class of $F$.
The results break down into cases depending on the relative numbers of line bundles of types $\II$ and $\IV$
we have, so we begin with the following computation.

\begin{lemma}\label{lem:IandIII}
\begin{align*}
e(F_\I\oplus F_\III)&=
\cwd^{\:n_\I}\cxwd^{\:n_\III}
+
\frac{d_\I-1}{2}\cwd^{\:n_\I-1}\,\cxwd^{\:n_\III}\cwt \eoz
+
\frac{d_\III-1}{2}\,\cwd^{\:n_\I}\,\cxwd^{\:n_\III-1}\cxwt\eoz \\
&\ \ +
\frac{(d_\I-1)(d_\III-1)}{2}\tau\Big(\iota^{2n_\III}\,\zeta^{n_\I-n_\III}\,c^{n_\I+n_\III}\Big)\notag.
\end{align*}
\end{lemma}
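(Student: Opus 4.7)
The approach is to multiply the expressions for $e(F_\I)$ and $e(F_\III)$ given in parts (i) and (iii) of Proposition~\ref{prop I-IV}, expand, and simplify the resulting cross term. By multiplicativity of the Euler class,
\[
e(F_\I \oplus F_\III) = \Bigl(\cwd^{\,n_\I} + \tfrac{d_\I - 1}{2}\,\cwd^{\,n_\I-1}\cwt \eoz\Bigr)\Bigl(\cxwd^{\,n_\III} + \tfrac{d_\III - 1}{2}\,\cxwd^{\,n_\III-1}\cxwt \eoz\Bigr).
\]
Distributing gives four summands. Three of them are precisely the first three terms in the claimed identity, so all of the work is in rewriting the ``cross'' term
\[
\tfrac{(d_\I - 1)(d_\III - 1)}{4}\,\cwd^{\,n_\I - 1}\cxwd^{\,n_\III - 1}\cwt\cxwt \eoz^{\,2}
\]
as the asserted multiple of $\tau\bigl(\iota^{2n_\III}\,\zeta^{n_\I-n_\III}\,c^{n_\I+n_\III}\bigr)$.

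The key observation is that $\cwt\cxwt = \xi$, an identity visible in the calculation of $\cxwt^2 Q$ inside the proof of Lemma~\ref{lemma Q conversion}. Consequently, Lemma~\ref{lem:xiQ} with $k=2$ gives
\[
\cwt\cxwt\,\eoz^{\,2} = \xi Q^{2} = 2\tau(\iota^2 c^2).
\]
The remaining factor $\cwd^{\,n_\I-1}\cxwd^{\,n_\III-1}$ can then be pulled inside $\tau$ using the Frobenius relation (\ref{eqn:Frobenius}), once we know the restriction values $\rho(\cwd)=\zeta c$ and $\rho(\cxwd)=\iota^{2}\zeta^{-1}c$ (both of which can be read off from the identities $\tau(1)\cwd = \tau(\zeta c)$ used in the proof of Proposition~\ref{prop: base case}, together with the calculation of $\rho(\cxwt^2)=\iota^4\zeta^{-2}$ inside the proof of Lemma~\ref{lemma Q conversion}). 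Computing
\[
\rho\bigl(\cwd^{\,n_\I-1}\cxwd^{\,n_\III-1}\bigr)\cdot\iota^2 c^2 = \iota^{2n_\III}\,\zeta^{n_\I-n_\III}\,c^{n_\I+n_\III},
\]
the cross term becomes $\tfrac{(d_\I-1)(d_\III-1)}{2}\,\tau\bigl(\iota^{2n_\III}\zeta^{n_\I-n_\III} c^{n_\I+n_\III}\bigr)$, matching the fourth term in the statement.

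The main obstacle is purely bookkeeping: identifying $\xi$ with $\cwt\cxwt$ and tracking the exponents of $\iota$, $\zeta$, and $c$ contributed by the restriction map. Once those conventions are in hand, the proof reduces to a single product expansion combined with one application each of Lemma~\ref{lem:xiQ} and the Frobenius relation.
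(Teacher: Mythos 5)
Your proof is correct and follows essentially the same route as the paper: expand the product from Proposition~\ref{prop I-IV}, recognize $\cwt\cxwt=\xi$ in the cross term, apply Lemma~\ref{lem:xiQ} with $k=2$ to get $\xi Q^2=2\tau(\iota^2 c^2)$, and absorb $\cwd^{\,n_\I-1}\cxwd^{\,n_\III-1}$ into the transfer via the Frobenius relation and the restriction values $\rho(\cwd)=\zeta\cd$, $\rho(\cxwd)=\iota^2\zeta^{-1}\cd$. One minor note: the relation $\cxwt\cwt=\xi$ and the restriction formulas are stated directly in the appendix (Theorem~\ref{thm:cohomStructure} and equations~(\ref{eqn:restrictions})), so you need not extract them from the interiors of other proofs; but the content and the exponent bookkeeping are all correct.
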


\begin{proof}
By Proposition \ref{prop I-IV} and Lemma~\ref{lem:xiQ}, we have 
\begin{align*}
e(F_\I\oplus F_\III)&=e(F_\I)\cdot e(F_\III) \\
&=\left[
\cwd^{\:n_\I}+\frac{d_\I-1}{2}\cwd^{\:n_\I-1}\cwt \eoz
\right]
\left[
\cxwd^{\:n_\III}+\frac{d_\III-1}{2}\,\cxwd^{\:n_\III-1}\cxwt\eoz
\right] \\
&=\cwd^{\:n_\I}\cxwd^{\:n_\III}
+
\frac{d_\I-1}{2}\cwd^{\:n_\I-1}\,\cxwd^{\:n_\III}\cwt \eoz
+
\frac{d_\III-1}{2}\,\cwd^{\:n_\I}\,\cxwd^{\:n_\III-1}\cxwt\eoz \\
&\qquad +
\frac{(d_\I-1)(d_\III-1)}{4} \cwd^{\:n_\I-1}\,\cxwd^{\:n_\III-1}\xi \eoz^2 \\
&= 
\cwd^{\:n_\I}\cxwd^{\:n_\III}
+
\frac{d_\I-1}{2}\cwd^{\:n_\I-1}\,\cxwd^{\:n_\III}\cwt \eoz
+
\frac{d_\III-1}{2}\,\cwd^{\:n_\I}\,\cxwd^{\:n_\III-1}\cxwt\eoz \\
&\qquad +
\frac{(d_\I-1)(d_\III-1)}{2}\tau\Big(\iota^{2n_\III}\,\zeta^{n_\I-n_\III}\,c^{n_\I+n_\III}\Big).
\qedhere
\end{align*}
\end{proof}

We now compute $e(F)$, breaking the result into two cases, corresponding to the following two propositions. 
We express the results using the notations introduced around Context~\ref{context}.
In the next two proposition we will make use of the following quantities to simplify exponents, let
\begin{align*}
    k_0 &= q-(m-m_0) \\
    k_1 &= p-(m-m_1)
  %  \ell &= m - m_0 - m_1
\end{align*}
and note that these are both nonnegative by assumption.
Note that these results depend only on the
ranks $(n,n_0,n_1)$ and degrees $(\Delta,\Delta_0,\Delta_1)$.

% {\color{red}To allow for negative degrees, do we want to define $\Delta_{\min}$ to be the one of $\Delta_0$ or $\Delta_1$
% with the smallest \emph{absolute value}?}

\begin{proposition}\label{prop:nIV less nII}
Suppose that $\ell = m_0 + m_1 - m \leq 0$.
%$m_0 + m_1 \leq m$. 
Let $\Delta_{\min} = \min(\Delta_0,\Delta_1)$ and let $\Delta_{\max} = \max(\Delta_0,\Delta_1)$.
Then
\begin{align*}
    e(F) &= \frac{\Delta_{\min}}{2^{|\ell|}}\cwd^{\;k_1}\cxwd^{\;k_0} Q^{|\ell|} 
    + \frac{\Delta_0-\Delta_{\min}}{2^{|\ell| + 1}} \cwd^{\;k_1-1}\cxwd^{\;k_0} \cwt Q^{|\ell|+1} \\
    &\qquad {}+ \frac{\Delta_1-\Delta_{\min}}{2^{|\ell| + 1}} \cwd^{\;k_1}\cxwd^{\;k_0-1} \cxwt Q^{|\ell|+1} 
    + \frac{\Delta - \Delta_{\max}}{2} \tau(\iota^{2k_0}\zeta^{k_1-k_0} \cd[p+q-m]).
\end{align*}
If $m_0 \leq 0$, this simplifies to
\[ 
    e(F) = \frac{\Delta_1}{2^{m-m_1}}\cxwt^{m_0}\cwd^{\;k_1}\cxwd^{\;q-m}Q^{m-m_1} 
            + \frac{\Delta-\Delta_1}{2}\tau(\iota^{2k_0}\zeta^{k_1-k_0} \cd[p+q-m]).
\]
If $m_1 \leq 0$, it simplifies to
\[
    e(F) = \frac{\Delta_0}{2^{m-m_0}}\cwt^{m_1}\cwd^{\;p-m}\cxwd^{\;k_0}Q^{m-m_0} 
            + \frac{\Delta-\Delta_0}{2}\tau(\iota^{2k_0}\zeta^{k_1-k_0} \cd[p+q-m]).
\]
If both $m_0\leq 0$ and $m_1\leq 0$, it simplifies to
\[
    e(F) = \frac{\Delta}{2}\tau(\iota^{2k_0}\zeta^{k_1-k_0} \cd[p+q-m]).
\]
% Suppose that $n_0 + n_1 \geq n$. Let $\Delta_{\min} = \min(\Delta_0,\Delta_1)$ and let $\Delta_{\max} = \max(\Delta_0,\Delta_1)$.
% Then
% \begin{align*}
%     e(F) &= \frac{\Delta_{\min}}{2^{\ell}}\cwd^{\;m_1}\cxwd^{\;m_0} Q^{\ell} 
%     + \frac{\Delta_0-\Delta_{\min}}{2^{\ell + 1}} \cwd^{\;m_1-1}\cxwd^{\;m_0} \cwt Q^{\ell+1} \\
%     &\qquad {}+ \frac{\Delta_1-\Delta_{\min}}{2^{\ell + 1}} \cwd^{\;m_1}\cxwd^{\;m_0-1} \cxwt Q^{\ell+1} 
%     + \frac{\Delta - \Delta_{\max}}{2} \tau(\iota^{2m_0}\zeta^{n_0-n_1} c^n).
% \end{align*}
% If $n_0 \geq p$, this simplifies to
% \[ 
%     e(F) = \frac{\Delta_1}{2^{p-m_1}}\cxwt^{-(n_0-p)}\cwd^{\;m_1}\cxwd^{\;n-p}Q^{p-m_1} + \frac{\Delta-\Delta_1}{2}\tau(\iota^{2m_0}\zeta^{n_0-n_1} c^n).
% \]
% If $n_1\geq q$, it simplifies to
% \[
%     e(F) = \frac{\Delta_0}{2^{q-m_0}}\cwt^{-(n_1-q)}\cwd^{\;n-q}\cxwd^{\;m_0}Q^{q-m_0} + \frac{\Delta-\Delta_0}{2}\tau(\iota^{2m_0}\zeta^{n_0-n_1} c^n).
% \]
% If both $n_0\geq p$ and $n_1\geq q$, it simplifies to
% \[
%     e(F) = \frac{\Delta}{2}\tau(\iota^{2m_0}\zeta^{n_0-n_1} c^n).
% \]
\end{proposition}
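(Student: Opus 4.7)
The strategy is to multiply out $e(F) = e(F_\I)e(F_\II)e(F_\III)e(F_\IV)$ using Proposition~\ref{prop I-IV} and Lemma~\ref{lem:IandIII}, then simplify via Lemmas~\ref{lem Q powers}--\ref{lem:Q xQ} and one key rewriting identity. I would first combine types~II and~IV: since $\ell \leq 0$ means $n_\II \geq n_\IV$, I factor
\begin{equation*}
Q^{n_\II}\chi Q^{n_\IV} = Q^{|\ell|}(Q\,\chi Q)^{n_\IV},
\end{equation*}
apply Lemma~\ref{lem:Q xQ} to $(Q\,\chi Q)^{n_\IV}$, and use the Frobenius relation with $\rho(Q^{|\ell|}) = 2^{|\ell|}c^{|\ell|}$ to collapse the resulting $\tau$-factor. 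Combined with the $\tau$-correction from Proposition~\ref{prop I-IV}(iv), this gives
\begin{equation*}
e(F_\II)\,e(F_\IV) = \frac{d_\II}{2^{|\ell|}}\,\cwd^{\,n_\IV}\cxwd^{\,n_\IV}Q^{|\ell|} + \frac{d_\II(d_\IV-1)}{2}\,\tau(\iota^{2n_\IV}c^{n_\II+n_\IV}).
\end{equation*}

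Multiplying this by $e(F_\I\oplus F_\III)$ from Lemma~\ref{lem:IandIII} gives eight cross-products. With $k_0 = n_\III+n_\IV$ and $k_1 = n_\I+n_\IV$, the three pure-monomial products land directly on
\begin{equation*}
M_1 = \cwd^{\,k_1}\cxwd^{\,k_0}Q^{|\ell|},\quad M_2 = \cwd^{\,k_1-1}\cxwd^{\,k_0}\cwt Q^{|\ell|+1},\quad M_3 = \cwd^{\,k_1}\cxwd^{\,k_0-1}\cxwt Q^{|\ell|+1},
\end{equation*}
with coefficients $d_\II/2^{|\ell|}$, $(d_\I-1)d_\II/2^{|\ell|+1}$, $(d_\III-1)d_\II/2^{|\ell|+1}$ respectively; the five $\tau$-involving cross-products collapse via Frobenius, using the restrictions $\rho(\cwd) = \zeta c$, $\rho(\cxwd) = \iota^2\zeta^{-1}c$, $\rho(\cwt) = \zeta$, and $\rho(Q) = 2c$, to scalar multiples of $\tau(\iota^{2k_0}\zeta^{k_1-k_0}c^n)$. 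The crucial step to match the proposition's form is the rewriting identity
\begin{equation*}
M_2 + M_3 = 2M_1 + 2^{|\ell|}\,\tau(\iota^{2k_0}\zeta^{k_1-k_0}c^n),
\end{equation*}
which I would derive by expanding $\cwt Q = \cwd\,\tau(1) + e^{-2}\kappa\,\cwt\cwd\cxwd$ and its $\chi$-analogue, so that $M_2+M_3$ produces a factor $\cxwd\cwt + \cwd\cxwt = \chi Q$, then using $e^{-2}\kappa\cdot\chi Q = \kappa$ (since $\tau(\cdot)\cdot e^{-j}\kappa = 0$) together with $\kappa M_1 = 2M_1 - \tau(1)M_1 = 2M_1 - 2^{|\ell|}\tau(\iota^{2k_0}\zeta^{k_1-k_0}c^n)$. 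Under the WLOG assumption $d_\I \leq d_\III$ (so $\Delta_{\min} = \Delta_0$, $\Delta_{\max} = \Delta_1$), using the identity to absorb the $M_2$-term yields the stated monomial coefficients $\Delta_{\min}/2^{|\ell|}$, $(\Delta_0-\Delta_{\min})/2^{|\ell|+1} = 0$, and $(\Delta_1-\Delta_{\min})/2^{|\ell|+1}$, while the five direct $\tau$-contributions plus the extra $(d_\I-1)d_\II/2$ from the rewrite collapse to $(\Delta-\Delta_{\max})/2$ after a short arithmetic consolidation.

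For the three simplified forms: when $m_0 \leq 0$ we have $\Delta_0 = 0$, hence $\Delta_{\min} = 0$ and the first two monomial terms vanish; Lemma~\ref{lem:simplification} (with $p-k = k_1$ and $i = 1-m_0$) rewrites $M_3$ as $2^{1-m_0}\cxwt^{m_0}\cwd^{\,k_1}\cxwd^{\,q-m}Q^{m-m_1}$, and the identities $k_0+|m_0| = q-m$ and $|\ell|+m_0 = m-m_1$ produce the stated coefficient $\Delta_1/2^{m-m_1}$. The $m_1 \leq 0$ case is symmetric, and the doubly-reduced case leaves only the $\tau$-term. The main obstacle is establishing the rewriting identity for $M_2+M_3$ cleanly and then carrying out the arithmetic consolidation of the five $\tau$-cross-products with the rewrite contribution; although each individual step is just an application of Frobenius and the relations in the coefficient ring, verifying that the final $\tau$-coefficient simplifies to $(\Delta-\Delta_{\max})/2$ requires careful tracking of how factors of $(d_\I-1)$, $(d_\III-1)$, and $(d_\IV-1)$ combine.
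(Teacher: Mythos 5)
Your proposal is correct and follows essentially the same route as the paper: first compute $e(F_\II)e(F_\IV)$ by factoring out $(Q\cdot\chi Q)^{n_\IV}$ and applying Lemma~\ref{lem:Q xQ}, then multiply by $e(F_\I\oplus F_\III)$ from Lemma~\ref{lem:IandIII}, and finally reorganize the $M_2,M_3$ coefficients around $\Delta_{\min}$ by converting the excess $(\Delta_{\min}-d_\II)/2^{|\ell|+1}(M_2+M_3)$ into contributions to $M_1$ and the $\tau$-term. Your rewriting identity $M_2+M_3 = 2M_1 + 2^{|\ell|}\tau(\iota^{2k_0}\zeta^{k_1-k_0}\cd[n])$ is exactly the content of the paper's reorganization step (it factors out $\chi Q\cdot Q^{|\ell|+1}$ and uses Lemma~\ref{lem:Q xQ} with $\rho(Q)=2\cd$); your derivation via expanding $\cwt Q$ and $\cxwt Q$ and using $e^{-2}\kappa\,\chi Q = \kappa$ is a minor variant that also checks out.
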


\begin{proof}
We have $\ell = n_\IV - n_\II$, so our assumption is equivalent to $n_\IV \leq n_\II$.
We start with the following computation, where we use that $\rho(Q) = 2\cd$.
\begin{align*}
    e(F_\II\dirsum F_\IV)
    &= e(F_\II)e(F_\IV) \\
    &= \frac{d_\II}{2^{n_\II}} Q^{n_\II} \cdot \left(\chi Q^{n_\IV} + \frac{d_\IV - 2^{n_\IV}}{2}\tau(\iota^{2n_\IV} \cd[n_\IV])\right) \\
    &= \frac{d_\II}{2^{n_\II}}(Q\cdot \chi Q)^{n_\IV} Q^{n_\II-n_\IV} + \frac{d_\II(d_\IV-2^{n_\IV})}{2}\tau(\iota^{2n_\IV}\cd[n_\II+n_\IV]) \\
    &= \frac{d_\II}{2^{n_\II}}Q^{n_\II-n_\IV}\bigl(2^{n_\IV-1}(2^{n_\IV}-1)\tau(\iota^{2n_\IV} \cd[2n_\IV]) + 2^{n_\IV}\cwd^{\;n_\IV}\cxwd^{\;n_\IV}\bigr) \\
    &\qquad + \frac{d_\II(d_\IV-2^{n_\IV})}{2}\tau(\iota^{2n_\IV}\cd[n_\II+n_\IV]) \\
    &= \frac{d_\II}{2^{n_\II}}\bigl(2^{n_\II-1}(2^{n_\IV}-1)\tau(\iota^{2n_\IV}\cd[n_\II+n_\IV]) + 2^{n_\IV}\cwd^{\;n_\IV}\cxwd^{\;n_\IV}Q^{n_\II-n_\IV}\bigr) \\
    &\qquad + \frac{d_\II(d_\IV-2^{n_\IV})}{2}\tau(\iota^{2n_\IV}\cd[n_\II+n_\IV]) \\
    &= \frac{d_\II}{2^{n_\II-n_\IV}}\cwd^{\;n_\IV}\cxwd^{\;n_\IV}Q^{n_\II-n_\IV}
        + \frac{d_\II(d_\IV-1)}{2}\tau(\iota^{2n_\IV}\cd[n_\II+n_\IV])
\end{align*}
Now we multiply by $e(F_\I\dirsum F_\III)$ using the result of Lemma~\ref{lem:IandIII}.
We suppress the calculation of all of the terms involving $\tau(\iota^{2(n_\III+n_\IV)}\zeta^{n_\I-n_\III}\cd[n])$.
\begin{align*}
    e(F) &= e(F_\I\dirsum F_\III)e(F_\II\dirsum F_\IV) \\
    &= \frac{d_\II}{2^{n_\II-n_\IV}}\cwd^{\;n_\I+n_\IV}\cxwd^{\;n_\III+n_\IV}Q^{n_\II-n_\IV} \\
    &\qquad + \frac{d_\II(d_\I-1)}{2^{n_\II-n_\IV+1}}\cwd^{\;n_\I+n_\IV-1}\cxwd^{\;n_\III+n_\IV}\cwt Q^{n_\II-n_\IV+1} \\
    &\qquad + \frac{d_\II(d_\III-1)}{2^{n_\II-n_\IV+1}}\cwd^{\;n_\I+n_\IV}\cxwd^{\;n_\III+n_\IV-1}\cxwt Q^{n_\II-n_\IV+1} \\
    &\qquad + \frac{d-d_\II(d_\I+d_\III-1)}{2}\tau(\iota^{2(n_\III+n_\IV)}\zeta^{n_\I-n_\III}\cd[n])
\end{align*}
Now recall that $\Delta_0 = d_\I d_\II$ and $\Delta_1 = d_\II d_\III$. We can write the sum of the middle two terms above
as
\begin{align*}
    \frac{d_\II(d_\I-1)}{2^{n_\II-n_\IV+1}}&\cwd^{\;n_\I+n_\IV-1}\cxwd^{\;n_\III+n_\IV}\cwt Q^{n_\II-n_\IV+1} \\
    & + \frac{d_\II(d_\III-1)}{2^{n_\II-n_\IV+1}}\cwd^{\;n_\I+n_\IV}\cxwd^{\;n_\III+n_\IV-1}\cxwt Q^{n_\II-n_\IV+1} \\
    &\qquad = \frac{\Delta_0 - d_\II}{2^{n_\II-n_\IV+1}}\cwd^{\;n_\I+n_\IV-1}\cxwd^{\;n_\III+n_\IV}\cwt Q^{n_\II-n_\IV+1} \\
    &\qquad\qquad + \frac{\Delta_1 - d_\II}{2^{n_\II-n_\IV+1}}\cwd^{\;n_\I+n_\IV}\cxwd^{\;n_\III+n_\IV-1}\cxwt Q^{n_\II-n_\IV+1} \\
    &\qquad = \frac{\Delta_0 - \Delta_{\min}}{2^{n_\II-n_\IV+1}}\cwd^{\;n_\I+n_\IV-1}\cxwd^{\;n_\III+n_\IV}\cwt Q^{n_\II-n_\IV+1} \\
    &\qquad\qquad + \frac{\Delta_1 - \Delta_{\min}}{2^{n_\II-n_\IV+1}}\cwd^{\;n_\I+n_\IV}\cxwd^{\;n_\III+n_\IV-1}\cxwt Q^{n_\II-n_\IV+1} \\
    &\qquad\qquad + \frac{\Delta_{\min} - d_\II}{2^{n_\II-n_\IV+1}} \cwd^{\;n_\I+n_\IV-1}\cxwd^{\;n_\III+n_\IV-1} \chi Q \cdot Q^{n_\II-n_\IV+1} \\
    &\qquad = \frac{\Delta_0 - \Delta_{\min}}{2^{n_\II-n_\IV+1}}\cwd^{\;n_\I+n_\IV-1}\cxwd^{\;n_\III+n_\IV}\cwt Q^{n_\II-n_\IV+1} \\
    &\qquad\qquad + \frac{\Delta_1 - \Delta_{\min}}{2^{n_\II-n_\IV+1}}\cwd^{\;n_\I+n_\IV}\cxwd^{\;n_\III+n_\IV-1}\cxwt Q^{n_\II-n_\IV+1} \\
    &\qquad\qquad + \frac{\Delta_{\min} - d_\II}{2^{n_\II-n_\IV}} \cwd^{\;n_\I+n_\IV}\cxwd^{\;n_\III+n_\IV} Q^{n_\II-n_\IV} \\
    &\qquad\qquad + \frac{\Delta_{\min} - d_\II}{2}\tau(\iota^{2(n_\III+n_\IV)}\zeta^{n_\I-n_\III}\cd[n])
\end{align*}
Here, we used that $\chi Q=\cxwt\cwd+\cwt\cxwd$ in the second step, and Lemma \ref{lem:Q xQ} and the fact that $\rho(Q) = 2\cd$ in the last step.
Combining with the other terms gives us
\begin{align*}
    e(F)
    &= \frac{\Delta_{\min}}{2^{n_\II-n_\IV}}\cwd^{\;n_\I+n_\IV}\cxwd^{\;n_\III+n_\IV}Q^{n_\II-n_\IV} \\
    &\qquad + \frac{\Delta_0-\Delta_{\min}}{2^{n_\II-n_\IV+1}}\cwd^{\;n_\I+n_\IV-1}\cxwd^{\;n_\III+n_\IV}\cwt Q^{n_\II-n_\IV+1} \\
    &\qquad + \frac{\Delta_1-\Delta_{\min}}{2^{n_\II-n_\IV+1}}\cwd^{\;n_\I+n_\IV}\cxwd^{\;n_\III+n_\IV-1}\cxwt Q^{n_\II-n_\IV+1} \\
    &\qquad + \frac{d-d_\II(d_\I+d_\III)+\Delta_{\min}}{2}\tau(\iota^{2(n_\III+n_\IV)}\zeta^{n_\I-n_\III}\cd[n])
\end{align*}
We now notice that $d-d_\II(d_\I+d_\III)+\Delta_{\min} = d - \Delta_{\max} = \Delta - \Delta_{\max}$. On substituting
\begin{align*}
    n_\II - n_\IV &= n_0 + n_1 - n = |\ell| \\
    n_\I + n_\IV &= n - n_1 = k_1 \\
    n_\III + n_\IV &= n - n_0 = k_0 \\
    n_\I - n_\III &= n_0 - n_1 = k_1 - k_0,
\end{align*}
we get the first expression in the proposition.

If $m_0 \leq 0$, then $\Delta_0 = 0$, so $\Delta_{\min} = 0$ as well (this assumes that $\Delta_1\geq 0$,
the case $\Delta_1 < 0$ can be handled similarly), and the first two terms of the general formula
are both 0. 
We have $\ell + k_1 = p - m_0 \geq p$, so we can apply Lemma~\ref{lem:simplification} to rewrite the third term as
\begin{align*}
    \frac{\Delta_1-\Delta_{\min}}{2^{|\ell| + 1}} &\cwd^{\;k_1}\cxwd^{\;k_0-1} \cxwt Q^{|\ell|+1} \\
    &= \frac{\Delta_1}{2^{|\ell| + 1}} 2^{|\ell|+1+k_1-p}\cxwt^{-(|\ell|+1+k_1-p)+1}\cwd^{\;k_1}\cxwd^{\;k_0-1+|\ell|+1+k_1-p} Q^{p-k_1} \\
    &= \frac{\Delta_1}{2^{m-m_1}}\cxwt^{m_0}\cwd^{\;k_1}\cxwd^{\;q-m}Q^{m-m_1}.
\end{align*}
Similar simplifications occur if $m_1\leq 0$. If both $m_0\leq 0$ and $m_1\leq 0$, then $\Delta_0 = \Delta_1 = 0$
and all but the last term of the general formula are 0.
\end{proof}

\begin{proposition}\label{prop:nII less nIV} %\label{prop:nII 0}
Suppose that $\ell> 0$.
Let $\epsilon = \overline \Delta_0$, so $\epsilon = 0$ if $\Delta_0$ and $\Delta_1$ are even
or $1$ if $\Delta_0$ and $\Delta_1$ are odd. Then
\begin{align*}
    e(F) &= \epsilon\xi\sum_{j=1}^{\ell-1}\rem{\binom{\ell}{j}}\cwd^{\;p-m_0+j}\;\cxwd^{\;k_0-j}\,\cxwt^{j-1}\,\cwt^{\;\ell-j-1} \\
    &\qquad {} + \Delta_0 \cwd^{\;p-m_0}\cxwd^{\;k_0}\cwt^{\;\ell}
        + \Delta_1 \cwd^{\;k_1}\cxwd^{\;q-m_1}\cxwt^{\;\ell}   \\
    &\qquad {}+ \frac{\Delta - \Delta_0 - \Delta_1 - \epsilon(2^{\beta(\ell)}-2)}{2}\tau(\iota^{2k_0}\zeta^{k_1-k_0}\cd[p+q-m]).
\end{align*}
\end{proposition}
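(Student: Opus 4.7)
The plan is to parallel the proof of Proposition~\ref{prop:nIV less nII} but for the case with an excess of type-$\IV$ line bundles over type-$\II$ ones. First I would compute
\[
e(F_\II\oplus F_\IV) = \frac{d_\II}{2^{n_\II}}\,Q^{n_\II}\chi Q^{n_\IV} + \frac{d_\II(d_\IV-2^{n_\IV})}{2^{n_\II+1}}\,Q^{n_\II}\tau(\iota^{2n_\IV}\cd[n_\IV])
\]
via Proposition~\ref{prop I-IV}(ii, iv). Since $\ell = n_\IV - n_\II > 0$, I would then rewrite $Q^{n_\II}\chi Q^{n_\IV} = (Q\cdot\chi Q)^{n_\II}\cdot \chi Q^\ell$ and substitute the expansions from Lemma~\ref{lem:Q xQ} for $(Q\chi Q)^{n_\II}$ and Lemma~\ref{lemma XQ powers} for $\chi Q^\ell$.

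The parity $\epsilon=\overline{\Delta}_0$ encodes the dichotomy $n_\II = 0$ versus $n_\II\geq 1$: every line bundle in $\mathcal{F}_\II$ has even degree, so $d_\II$, and hence $\Delta_0 = d_\I d_\II$ and $\Delta_1 = d_\II d_\III$, is odd exactly when $n_\II = 0$, giving $\epsilon = 1$; otherwise $\epsilon = 0$. For $n_\II \geq 1$, the interior terms ($1\leq j\leq \ell-1$) of the $\chi Q^{\ell}$ expansion each contain the factor $\xi = \cxwt\cwt$, and by Lemma~\ref{lem:xiQ} we have $\xi\,Q^{n_\II} = 2^{n_\II-1}\tau(\iota^{2}\cd[n_\II])$; this forces those interior contributions to collapse into a single $\tau(\iota^{2k_0}\zeta^{k_1-k_0}\cd[p+q-m])$ term via the Frobenius relation, so they contribute only to the $\tau$-coefficient of the final answer. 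The two endpoint terms $j=0$ and $j=\ell$ survive and, once multiplied by the leading polynomial piece of $e(F_\I\oplus F_\III)$ from Lemma~\ref{lem:IandIII}, yield the monomials $\Delta_0\cwd^{\,p-m_0}\cxwd^{\,k_0}\cwt^{\,\ell}$ and $\Delta_1\cwd^{\,k_1}\cxwd^{\,q-m_1}\cxwt^{\,\ell}$. When $n_\II = 0$, those interior terms persist and produce the $\epsilon\xi\sum_j\rem{\binom{\ell}{j}}\cdots$ sum.

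Finally I would multiply by $e(F_\I\oplus F_\III)$ from Lemma~\ref{lem:IandIII}, collecting the four sources of $\tau$ contributions---the $(d_\I-1)(d_\III-1)$ piece of Lemma~\ref{lem:IandIII}, the $d_\IV-2^{n_\IV}$ piece of Proposition~\ref{prop I-IV}(iv), the $2^{n_\II-1}(2^{n_\II}-1)$ piece of Lemma~\ref{lem:Q xQ}, and the $(2^\ell-2^{\beta(\ell)})/2$ piece of Lemma~\ref{lemma XQ powers}---using the Frobenius relation together with $\rho(Q) = 2\cd$ and $\rho(\chi Q) = 2\iota^2\cd$, and substituting $n_0 = n_\I + n_\II$, $n_1 = n_\II+n_\III$, $k_0 = n_\III+n_\IV$, $k_1 = n_\I+n_\IV$, $\Delta_0 = d_\I d_\II$, $\Delta_1 = d_\II d_\III$, and $\Delta = d$.

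The main obstacle will be the combinatorial bookkeeping of the $\tau$-coefficient: verifying that the sum of all these contributions reduces exactly to $(\Delta-\Delta_0-\Delta_1 - \epsilon(2^{\beta(\ell)}-2))/2$. The $2^{\beta(\ell)}$ piece originates from Lemma~\ref{lemma XQ powers} and, when $n_\II \geq 1$, should be cancelled precisely by the absorption of the collapsed interior terms, so that a $2^{\beta(\ell)}$ correction survives only in the $n_\II = 0$ case; that this cancellation occurs on the nose is the subtle arithmetic heart of the statement.
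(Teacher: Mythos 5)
Your plan differs from the paper's in a substantive way and, as written, has a gap in the $n_\II\geq 1$ (that is, $\epsilon=0$) case. The paper proves the $n_\II=0$ case by computing $e(F_\I\oplus F_\III\oplus F_\IV) = e(F_\I\oplus F_\III)\cdot e(F_\IV)$, obtaining the $\rem{\binom{n_\IV}{j}}$-indexed $\xi$-divisible sum explicitly, and only then multiplies by $e(F_\II) = \tfrac{d_\II}{2^{n_\II}}Q^{n_\II}$ to handle $n_\II\geq 1$; the order is chosen precisely so that a bare factor $Q^{n_\II}$ meets the $\xi$-divisible sum and Lemma~\ref{lem:xiQ} applies. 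You instead begin with $e(F_\II\oplus F_\IV)$, rewrite $Q^{n_\II}\chi Q^{n_\IV}$ as $(Q\chi Q)^{n_\II}\chi Q^\ell$, and substitute Lemma~\ref{lem:Q xQ} for $(Q\chi Q)^{n_\II}$. But after that substitution there is no bare $Q^{n_\II}$ left to pair with the $\xi$ arising from the interior terms of $\chi Q^\ell$: the non-$\tau$ part of $(Q\chi Q)^{n_\II}$ is $2^{n_\II}\cwd^{\;n_\II}\cxwd^{\;n_\II}$, and the interior contributions take the form $d_\II\,\rem{\binom{\ell}{j}}\,\xi\,\cwd^{\;n_\II+j}\cxwd^{\;n_\II+\ell-j}\cxwt^{\;j-1}\cwt^{\;\ell-j-1}$, to which Lemma~\ref{lem:xiQ} does not apply. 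The claim that these collapse ``by Lemma~\ref{lem:xiQ}'' therefore does not follow from your own stated steps.

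The conclusion you want is still true, but for a different reason which you should supply: when $n_\II\geq 1$ the factor $d_\II$ is even, and $2\xi = g\xi = \tau(1)\xi = \tau(\iota^2)$, so $d_\II\xi = \tfrac{d_\II}{2}\tau(\iota^2)$ and the Frobenius relation turns each interior term into a $\tau$ term. With that correction your route can be pushed through; alternatively, keep $Q^{n_\II}$ unexpanded (write $Q^{n_\II}\chi Q^{n_\IV}$, expand only $\chi Q^{n_\IV}$ by Lemma~\ref{lemma XQ powers}) so that Lemma~\ref{lem:xiQ} does apply as you intended, which is closer to the paper's argument save for the order in which $e(F_\I\oplus F_\III)$ is brought in. Your parity analysis of $\epsilon$, the identification of the four sources of $\tau$-contributions, and the plan to collect them via $\rho(Q)=2\cd$ and $\rho(\chi Q)=2\iota^2\cd$ together with the Frobenius relation are all sound and in the spirit of the paper's bookkeeping.
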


\begin{proof}
The assumption that $\ell > 0$ is equivalent to $n_\II < n_\IV$.
We start with the case $\epsilon = 1$, which is equivalent to $n_\II = 0$.
% The assumption that $\Delta_0$ and $\Delta_1$ are odd
% is equivalent to $n_\II = 0$.

We use the second representation of $e(F_\IV)$ from Proposition~\ref{prop I-IV} and multiply it by
the expression from Lemma~\ref{lem:IandIII}. Note that every term in the sum in $e(F_\IV)$, except for the first
and last terms, is divisible by $\xi$. This allows us to use Lemma~\ref{lem:xiQ} to simplify several of the terms
in the product. We use Lemma~\ref{lemma Q conversion} to simplify others. We get
\begin{align*}
    e(F) &= e(F_\I \dirsum F_\III)e(F_\IV) \\
    &= \cwd^{\:n_\I}\cxwd^{\:n_\III}\sum_{j=0}^{n_\IV}\rem{\binom{n_\IV}{j}}\:(\cxwt \cwd)^j(\cwt \cxwd)^{n_\IV-j} \\
    &\qquad + \frac{d_\I-1}{2}\cwd^{\:n_\I-1}\,\cxwd^{\:n_\III+n_\IV}\cwt^{n_\IV+1} \eoz \\
    &\qquad + \frac{d_\III-1}{2}\,\cwd^{\:n_\I+n_\IV}\,\cxwd^{\:n_\III-1}\cxwt^{n_\IV+1}\eoz \\
    &\qquad + \frac{d - d_\I - d_\III - 2^{\beta(n_\IV)}-2}{2}\tau\Big(\iota^{2(n_\III+n_\IV)}\,\zeta^{n_\I-n_\III}\,\cd[n_\I+n_\III+n_\IV]\Big) \\
    &= \xi \sum_{j=1}^{n_\IV-1}\rem{\binom{n_\IV}{j}} \cwd^{\;n_\I+j}\cxwd^{\;n_\III+n_\IV-j}\cxwt^{j-1}\cwt^{n_\IV-j-1} \\
    &\qquad + \cwd^{\;n_\I}\cxwd^{\;n_\III+n_\IV}\cwt^{n_\IV} + \cwd^{\;n_\I+n_\IV}\cxwd^{\;n_\III}\cxwt^{n_\IV} \\
    &\qquad + (d_\I-1)\cwd^{\;n_\I}\cxwd^{\;n_\III+n_\IV}\cwt^{n_\IV} + (d_\III-1)\cwd^{\;n_\I+n_\IV}\cxwd^{\;n_\III}\cxwt^{n_\IV} \\
    &\qquad + \frac{d - d_\I - d_\III - 2^{\beta(n_\IV)}-2}{2}\tau\Big(\iota^{2(n_\III+n_\IV)}\,\zeta^{n_\I-n_\III}\,\cd[n_\I+n_\III+n_\IV]\Big) \\
    &= \xi \sum_{j=1}^{n_\IV-1}\rem{\binom{n_\IV}{j}} \cwd^{\;n_\I+j}\cxwd^{\;n_\III+n_\IV-j}\cxwt^{j-1}\cwt^{n_\IV-j-1} \\
    &\qquad + d_\I \cwd^{\;n_\I}\cxwd^{\;n_\III+n_\IV}\cwt^{n_\IV} + d_\III \cwd^{\;n_\I+n_\IV}\cxwd^{\;n_\III}\cxwt^{n_\IV} \\
    &\qquad + \frac{d - d_\I - d_\III - 2^{\beta(n_\IV)}-2}{2}\tau\Big(\iota^{2(n_\III+n_\IV)}\,\zeta^{n_\I-n_\III}\,\cd[n_\I+n_\III+n_\IV]\Big) \\
\end{align*}
In this case, because $n_\II = 0$, we have
\begin{align*}
    n_\IV &= \ell \\
    n_\I &= n_0 = p - m_0 \\
    n_\III &= n_1 = q - m_1 \\
    n_\I + n_\IV &= n - n_1 = k_1 \\
    n_\III + n_\IV &= n - n_0 = k_0.
\end{align*}
Substituting these, along with $\Delta_0 = d_\I$ and $\Delta_1 = d_\III$, we get the expression in the proposition
with $\epsilon = 1$.

Now consider the case where $\epsilon = 0$, which is equivalent to $n_\II > 0$.
We take the expression for $e(F_\I\dirsum F_\III\dirsum F_\IV)$ above,
multiply by $e(F_\II)$, and use Lemmas~\ref{lemma Q conversion} and~\ref{lem:xiQ}:
\begin{align*}
    e(F) &= e(F_\II)e(F_\I\dirsum F_\III\dirsum F_\IV) \\
    &= \frac{d_\II}{2^{n_\II}} Q^{n_\II} \cdot
    \Biggl( \xi \sum_{j=1}^{n_\IV-1}\rem{\binom{n_\IV}{j}} \cwd^{\;n_\I+j}\cxwd^{\;n_\III+n_\IV-j}\cxwt^{j-1}\cwt^{n_\IV-j-1} \\
    &\qquad + d_\I \cwd^{\;n_\I}\cxwd^{\;n_\III+n_\IV}\cwt^{n_\IV} + d_\III \cwd^{\;n_\I+n_\IV}\cxwd^{\;n_\III}\cxwt^{n_\IV} \\
    &\qquad + \frac{d_\I d_\III d_\IV - d_\I - d_\III - 2^{\beta(n_\IV)}-2}{2}\tau\Big(\iota^{2(n_\III+n_\IV)}\,\zeta^{n_\I-n_\III}\,\cd[n_\I+n_\III+n_\IV]\Big) \Biggr) \\
    &= \frac{d_\II}{2} \tau(\iota^2 c^{n_\II}) \sum_{j=1}^{n_\IV-1}\rem{\binom{n_\IV}{j}} \cwd^{\;n_\I+j}\cxwd^{\;n_\III+n_\IV-j}\cxwt^{j-1}\cwt^{n_\IV-j-1} \\
    &\qquad + \frac{d_\II}{2^{n_\II}}d_\I \cwd^{\;n_\I}\cxwd^{\;n_\III+n_\IV}\cwt^{n_\IV}Q^{n_\II} 
        + \frac{d_\II}{2^{n_\II}}d_\III \cwd^{\;n_\I+n_\IV}\cxwd^{\;n_\III}\cxwt^{n_\IV}Q^{n_\II} \\
    &\qquad + \frac{d - d_\I d_\II - d_\II d_\III - 2^{\beta(n_\IV)}d_\II-2d_\II}{2}\tau\Big(\iota^{2(n_\III+n_\IV)}\,\zeta^{n_\I-n_\III}\,\cd[n]\Big) \\
    &= d_\I d_\II \cwd^{\;n_\I+n_\II}\cxwd^{\;n_\III+n_\IV}\cwt^{n_\IV-n_\II}
        + d_\II d_\III \cwd^{\;n_\I+n_\IV}\cxwd^{\;n_\II+n_\III}\cxwt^{n_\IV-n_\II} \\
    &\qquad + \frac{d - d_\I d_\II - d_\II d_\III}{2}\tau\Big(\iota^{2(n_\III+n_\IV)}\,\zeta^{n_\I-n_\III}\,\cd[n]\Big)
\end{align*}
Rewriting with the notation given in the proposition, we get the expression in the proposition
with $\epsilon = 0$.
\end{proof}

\section{Singular manifolds}\label{sec:singular}

We now want to discuss how the various terms in Propositions~\ref{prop:nIV less nII} and~\ref{prop:nII less nIV}
can be represented by manifolds in a way analogous to how varieties are used
to define the Chow ring. It is well known that manifolds {\it per se} cannot be used to define singular (co)homology,
but we can get around this by allowing limited singularities.

\subsection{Basics}
In \cite{HM:singularities}, Hastings and Waner showed that ordinary $RO(G)$-graded equivariant homology could be
defined using bordism of manifolds with a certain kind of singularity, similar to the Baas-Sullivan approach nonequivariantly.
The Hastings-Waner result can be generalized to show that ordinary $RO(\Pi B)$-graded homology can be given
by bordism of singular manifolds as well. 
% We do not need the full power of this result, just that singular manifolds
% can be used to represent some elements in ordinary homology, so we now review the definitions and results we need.
We review the definitions and the results we need.
Our context for the moment is a finite group $G$, a $G$-space $B$, and $G$-spaces over $B$.
In the following, we use the notion of equivariant orientation
discussed in \cite{CMW:orientation}.

\begin{definition}
For $\gamma\in RO(\Pi B)$, we define a {\em $\gamma$-oriented manifold over $B$} to be a smooth $G$-manifold $M$
with a map $f\colon M\to B$ together with a specified stable map $\tau_M\to f^*\gamma$ of representations
of $\Pi M$, where $\tau_M$ is the tangent representation of $M$.
We say that $M$ has \emph{dimension} $\gamma$ to mean that $M$ is $\gamma$-orientable,
that is, there exists a $\gamma$-orientation $\tau_M\to f^*\gamma$, but we do not pick one.
\end{definition}

Here, stable maps are defined with respect to addition of arbitrary (finite) representations of $G$.

\begin{definition}\label{def:singularmfld}
We wish to define a {\em $\gamma$-dimensional singular manifold of depth $n$ over $B$}, where $n\geq 0$,
which we do recursively on $n$. 
A $\gamma$-dimensional manifold of depth 0 is defined to be a compact $\gamma$-dimensional manifold over $B$,
with possibly empty boundary.
For $n>0$, a singular manifold of depth (at most) $n$ consists of a $G$-space $X$ over $B$ together with a decomposition
$X = N\union_{\bndry_1 N} M\phi$, where 
\begin{enumerate}
\item
$N$ is a compact $\gamma$-dimensional manifold over $B$ with (possibly empty) boundary $\bndry N$;

\item
$\bndry N = \bndry_1 N\union_\bndry \bndry_2 N$ is a decomposition of $\bndry N$ into two
codimension-0 submanifolds sharing a common boundary $\bndry = \bndry\bndry_1 N = \bndry\bndry_2 N$;

\item
$M\phi$ is the mapping cylinder of a $G$-map
$\phi\colon (\bndry_1 N,\bndry\bndry_1 N) \to (P,\bndry P)$,
where $(P,\bndry P)$ is a disjoint union of singular manifolds of depth no more than $n-1$,
each of which has dimension $\gamma-k$ for some integer $k\geq 0$.
We call $P$ the {\em singular part} of $X$.
\end{enumerate}
Recursively, we define the boundary of $X$ to be the $(\gamma-1)$-dimensional singular manifold
\[
 \bndry X = \bndry_2 N \union_{\bndry\bndry_2 N} M(\bndry\phi)
\]
where $\bndry\phi\colon (\bndry\bndry_2N,\emptyset) = (\bndry\bndry_1N,\emptyset) \to (\bndry P,\emptyset)$ is the restriction of $\phi$.
We orient $\bndry_2 N$ using the identification $\R\dirsum\tau_{\bndry_2 N} = \tau_N|\bndry_2 N$ with
$\R$ the inward normal.

If we wish to specify the singular parts of $X$ in our notation, we will write
$(X;P_1,P_2,\ldots,P_n)$ where $P_1$ is the singular part of $X$, $P_2$ is the singular part of $P_1$, and so on.

We say that a $\gamma$-dimensional singular manifold $N\union_{\bndry_1 N} M\phi$ is
\emph{$\gamma$-oriented} if we have specified a $\gamma$-orientation of $(N,\bndry N)$.
\end{definition}

We use the phrase ``singular manifold,'' without specifying the depth,
to mean a singular manifold of any finite depth.

\begin{definition}\label{def:codim2singularities}
A $\gamma$-dimensional singular manifold $X$ {\em has codimension-2 singularities} if its singular
part $P$ has components of dimensions each no larger than $\gamma-2$.
\end{definition}

Note that we put no further restrictions on $P$---in particular, its own singularities can
be of any non-negative codimensions.

A simple but important observation is the following.

\begin{lemma}\label{lem:homiso}
If $X = N\union_{\bndry_1 N} M\phi$ is a $\gamma$-oriented singular manifold with co\-di\-men\-sion-2 singularities, then
\begin{align*}
 H^G_\gamma(X_+) &\iso H^G_\gamma(N,\bndry_1 N)
\intertext{and}
 H^G_\gamma(X,\bndry X) &\iso H^G_\gamma (N,\bndry N).
\end{align*}
%and similarly for cohomology.
\end{lemma}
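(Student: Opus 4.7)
My plan is to prove both claims simultaneously by induction on the depth of $X$. The base case, depth $0$, is immediate: then $M\phi$ is empty, so $\partial_1 N$ is empty and $X = N$, and both proposed identifications reduce to tautologies. For the inductive step, the essential tool is that the mapping cylinder $M\phi$ deformation retracts $G$-equivariantly onto its singular part $P$. Combined with the decomposition $X = N \cup_{\partial_1 N} M\phi$, this gives a $G$-homotopy equivalence of pairs
\[
    (X, \partial X) \simeq (N \cup_\phi P,\; \partial_2 N \cup_{\partial\phi} \partial P),
\]
which is the starting point for both arguments.

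For the first isomorphism, I would consider the cofiber sequence $P_+ \to X_+ \to X/P$ and observe that $M\phi / P = C\partial_1 N$ (collapsing the target end of a mapping cylinder yields the cone on the domain), so $X/P = N \cup_{\partial_1 N} C\partial_1 N = N/\partial_1 N$. The resulting long exact sequence
\[
    H^G_\gamma(P_+) \longrightarrow H^G_\gamma(X_+) \longrightarrow H^G_\gamma(N, \partial_1 N) \longrightarrow H^G_{\gamma-1}(P_+)
\]
reduces the claim to the vanishing $H^G_\gamma(P_+) = H^G_{\gamma-1}(P_+) = 0$. Since $P$ is a disjoint union of singular manifolds each of dimension $\gamma - k$ with $k \geq 2$, this is a pure dimensional statement.

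For the second isomorphism, I would combine the same deformation retraction with an excision argument. Removing the open cylinder away from $P$ realizes the inclusion $(N, \partial N) \hookrightarrow (X, M\phi \cup \partial_2 N)$ as an equivalence of pairs, so $H^G_\gamma(X, M\phi \cup \partial_2 N) \cong H^G_\gamma(N, \partial N)$. The long exact sequence of the triple $\partial X \subset M\phi \cup \partial_2 N \subset X$, together with excision identifying $H^G_*(M\phi \cup \partial_2 N,\, \partial X) \cong H^G_*(P, \partial P)$ via the same deformation retraction, then reduces the claim to $H^G_\gamma(P, \partial P) = H^G_{\gamma-1}(P, \partial P) = 0$, which again follows from the dimensional bound on $P$.

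The main obstacle is establishing rigorously this dimensional vanishing $H^G_{\gamma'+k}(Y_+) = 0$ (and its relative version) for $k > 0$ when $Y$ is a $\gamma'$-dimensional singular manifold with $\gamma' \in RO(\Pi B)$. In the $RO(G)$-graded setting, Hastings--Waner obtained this by equipping singular manifolds with an equivariant cellular structure compatible with the cells of the underlying manifolds; extending their argument to the extended grading of \cite{CostenobleWanerBook} requires the analogous parametrized cellular theory for singular manifolds over $B$. One would establish this auxiliary vanishing by a parallel induction on depth, reducing at the bottom to the corresponding cellular bound for genuine $\gamma'$-manifolds recorded in the appendix. Once that vanishing is in place, the main induction above is formal.
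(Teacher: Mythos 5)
Your proposal follows essentially the same route as the paper's proof. Both arguments hinge on (a) the long exact sequence of the pair $(X, M\phi)$ (your cofiber sequence $P_+ \to X_+ \to X/P$ is the same thing once one uses the deformation retraction $M\phi \simeq P$ and the identification of $X/P$ with $N/\bndry_1 N$ up to homotopy, which is excision in disguise), and (b) the dimensional vanishing of $H^G_\gamma(P_+)$ and $H^G_{\gamma-1}(P_+)$ coming from the codimension-$2$ bound on the singular part. Likewise your triple $\bndry X \subset M\phi \cup \bndry_2 N \subset X$ is identical to the paper's triple $(X, \bndry X \cup M\phi, \bndry X)$, since $\bndry X \cup M\phi = M\phi \cup \bndry_2 N$.

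One small remark: the outer "induction on depth" that you use to frame the proof is not actually needed for the main argument. Once you know (or assume, as the paper does) that a singular manifold of dimension $\gamma'$ admits an equivariant cell structure with cells only through dimension $\gamma'$, the lemma follows in a single step with no recursion; the depth-induction only enters in establishing that cellular bound on $P$. You correctly flag this as the genuine foundational content, whereas the paper takes it as given ("a cell complex of dimension no more than $\gamma - 2$"). Your explicit acknowledgment of this gap is a reasonable, slightly more cautious, presentation of the same argument.
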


\begin{proof}
Consider the following part of the long exact sequence of the pair $(X,M\phi)$:
\[
 H^G_\gamma(M\phi_+) \to H^G_\gamma(X_+) \to H^G_\gamma(X,M\phi) \to H^G_{\gamma-1}(M\phi_+).
\]
Because $M\phi$ is homotopy equivalent to $P$, which is a cell complex of dimension no more than $\gamma-2$,
the first and last terms are 0. Therefore the middle two terms are isomorphic.
By excision, $H^G_\gamma(X,M\phi) \iso H^G_\gamma(N,\bndry_1 N)$, so the first isomorphism follows.

The second isomorphism follows similarly from the long exact sequence of the triple
$(X,\bndry X\union M\phi,\bndry X)$.
%, and the cohomology isomorphisms are the same argument.
\end{proof}

Note that $(N,\bndry N)$ is a smooth $\gamma$-oriented manifold with boundary,
hence has a fundamental class $[N,\bndry N]\in H^G_\gamma(N,\bndry N)$
(\cite[3.11.8]{CostenobleWanerBook}).

\begin{definition}\label{def:fundclass}
If $X = N\union_{\bndry_1 N} M\phi$ is a $\gamma$-oriented singular manifold with co\-di\-men\-sion-2 singularities,
we define $[X,\bndry X]\in H^G_\gamma(X,\bndry X)$ to be the image of the
fundamental class $[N,\bndry N]$ under the isomorphism of Lemma~\ref{lem:homiso}.
If we want to specify the singular parts, we will write
$[X,\bndry X; P_1, P_2, \ldots]$, or $[X;P_1,P_2,\ldots]$ when the boundary is empty.
\end{definition}

Up to this point we have been careful to use the letter $X$ to denote a singular manifold, to prevent confusion
with the $M$ used to denote a mapping cylinder. We will now start to use $M$ rather than $X$
to denote a general singular manifold.

Given any pair of spaces $(X,Y)$ over $B$, a $\gamma$-oriented singular manifold over $(X,Y)$
is a map $f\colon (M,\bndry M)\to (X,Y)$ over $B$ with $(M,\bndry M)$
being a $\gamma$-oriented singular manifold with codimension-2 singularities.
We can then define oriented bordism of such singular manifolds in the usual way, using appropriate singular bordisms,
again with codimension-2 singularities.

More generally, we define a {\em stable} $\gamma$-oriented singular manifold over $(X,Y)$ to be
a $(\gamma+V)$-oriented singular manifold $(M,\bndry M) \to (X,Y)\times (D(V),S(V))$ over $B$
for some $G$-representation $V$.
We call these stable manifolds because we identify such a manifold with its ``suspension''
\[
    (M,\bndry M)\times (D(W),S(W)) \to (X,Y)\times (D(V\dirsum W), S(V\dirsum W))
\]
for any representation $W$.
%(We use $\times$ here for the usual monoidal product of pairs of spaces.)
% This gives an element
% \[
%  [M,\bndry M]_* \in H^G_{\gamma+V}((X,Y)\times (D(V),S(V))) \iso H^G_\gamma(X,Y).
% \]
% Given the failure of equivariant transversality, in general, this allows us to represent even more elements in ordinary homology.
When $Y=\emptyset$, it is often useful to think of a stable manifold over $X$ as given
by two maps, a map $(M,\bndry M)\to (D(V),S(V))$ (which we think of as the stable manifold) and
a map $M\to X$.
Write $\bar\Omega^G_{RO(\Pi B)}(X,Y)$ for the resulting stable, oriented singular bordism groups of $(X,Y)$.
The proof given by Hastings and Waner generalizes in a straightforward way to show the following,
in which $H^G_{RO(\Pi B)}(X,Y)$ denotes ordinary homology with Burnside ring coefficients.

\begin{theorem}[cf. \protect{\cite[Theorem A]{HM:singularities}}] \label{thm:singularbordism}
The map 
\[
 \bar\Omega^G_{RO(\Pi B)}(X,Y)\to H^G_{RO(\Pi B)}(X,Y)
\]
taking a $(\gamma+V)$-oriented stable manifold
$f\colon (M,\bndry M)\to (X,Y)\times (D(V),S(V))$ to 
\[
    f_*[M,\bndry M] \in H^G_{\gamma+V}((X,Y)\times(D(V),S(V))) \iso H^G_{\gamma}(X,Y)
\]
is a natural isomorphism of homology theories.
\qed
\end{theorem}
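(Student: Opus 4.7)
The plan is to mirror Hastings and Waner's strategy and reduce the $RO(\Pi B)$-graded statement to their $RO(G)$-graded result by a comparison-theorem argument. First I would verify that $\bar\Omega^G_{RO(\Pi B)}(-,-)$ is itself an $RO(\Pi B)$-graded equivariant homology theory on $G$-spaces over $B$. Homotopy invariance is straightforward, since a homotopy of maps into $(X,Y)\times (D(V),S(V))$ does not change the underlying stable oriented singular bordism class. The long exact sequence of a pair is the usual one, with connecting map sending $(M,\bndry M)\to (X,Y)\times (D(V),S(V))$ to $\bndry M\to Y\times(D(V),S(V))$; exactness is proved by collaring neighborhoods of boundary and singular strata. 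Additivity for disjoint unions is immediate. Excision is the main input and follows from equivariant transversality applied stratum by stratum to a singular manifold with codimension-2 singularities, just as in the Baas--Sullivan/Hastings--Waner setting.

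Next I would show that $(M,\bndry M)\mapsto f_*[M,\bndry M]$ is a well-defined natural transformation of $RO(\Pi B)$-graded homology theories. Well-definedness on bordism classes uses Lemma~\ref{lem:homiso}: the fundamental class $[M,\bndry M]$ depends only on the top smooth piece $(N,\bndry N)$, and given a singular bordism $(W,\bndry W)\to (X,Y)\times(D(V\dirsum\R),S(V\dirsum\R))$, the same lemma applied to $W$ produces a nullhomology of the image class. Naturality in $(X,Y)$, stability under suspension by representations, and compatibility with the long exact sequence of a pair are all immediate from the construction.

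With both functors identified as $RO(\Pi B)$-graded homology theories and the map identified as a natural transformation, the isomorphism follows from an equivariant comparison theorem: it suffices to check the statement on the basic cells of the parametrized orbit category of $\Pi B$, namely pairs of the form $(G/H\times D(\gamma_0),\, G/H\times S(\gamma_0))$ where $\gamma_0$ is a representation of $H$ indexed by a component of $B$. On such cells the $RO(\Pi B)$-grading collapses to an $RO(H)$-grading, and both sides pull back to the corresponding theories on a point; the comparison is then exactly the case treated by Hastings and Waner, where the isomorphism on coefficients identifies $\bar\Omega^G_*(\mathrm{pt})$ in degree $0$ with the Burnside ring.

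The main obstacle will be the transversality step in the excision axiom. One must transversalize a map $M\to X$ where $M$ is a recursively singular manifold, while preserving both the codimension-2 singularity structure and the orientation data taking values in the pullback of a class in $RO(\Pi B)$ (rather than a globally defined class in $RO(G)$). The key point is that the stratum-by-stratum transverse approximation must be arranged so that orientations on the strata match compatibly under the gluing maps $\phi\colon \bndry_1 N\to P$, and that these compatibilities are preserved as we move from $B$-local to globally defined orientation data. Once this technical point is handled as in \cite[Theorem A]{HM:singularities}, everything else proceeds by the comparison argument sketched above.
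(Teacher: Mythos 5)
The paper does not actually supply a proof of this theorem: it states that ``the proof given by Hastings and Waner generalizes in a straightforward way,'' and then asserts the result with an immediate \verb|\qed|. Your proposal is a reasonable fleshing out of what such a generalization would look like, and it is consistent in spirit with what the authors are invoking: verify that stable oriented singular bordism with codimension-2 singularities forms an $RO(\Pi B)$-graded homology theory on spaces over $B$, check that the pushforward-of-fundamental-class assignment is a natural transformation (which is exactly what Lemma~\ref{lem:homiso} provides), and then reduce to the Hastings--Waner case by a comparison argument on cells. Your identification of the transversality step for recursively singular manifolds, with orientation data valued in a class of $RO(\Pi B)$ rather than $RO(G)$, as the essential technical bottleneck is also apt; this is exactly the point where the ``straightforward generalization'' claim does real work and would need to be checked against \cite[Theorem A]{HM:singularities}. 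The one place where your sketch is stated a bit loosely is the comparison step: for a theory graded on $RO(\Pi B)$ and defined on ex-$G$-spaces over $B$, the appropriate cells are parametrized cells $G\times_H D(V)$ equipped with a map to $B$, and one should phrase the comparison in terms of the parametrized stable category over $B$ from \cite{CostenobleWanerBook} rather than literally ``pairs of the form $(G/H\times D(\gamma_0), G/H\times S(\gamma_0))$.'' Once stated in that form, the restriction to a single cell collapses the $RO(\Pi B)$-grading to an $RO(H)$-grading as you say, and the coefficient comparison is the Hastings--Waner isomorphism. So: your approach matches the paper's intent, but since the paper gives no proof, the burden of the transversality argument remains genuinely open in your sketch as well.
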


When $x\in H^G_{\gamma}(X,Y)$ and $x = f_*[M,\bndry M]$ for a 
(stable, oriented) singular manifold $f\colon (M,\bndry M) \to (X,Y)$,
we say that $x$ is {\em represented by} $(M,\bndry M)$.
When $f$ is understood, we will write $[M,\bndry M]_*$ for $f_*[M,\bndry M]$.

Note that, if $h\colon (X,Y)\to (X',Y')$ is a $G$-map and $x\in H^G_{RO(\Pi B)}(X,Y)$
is represented as $x = f_*[M,\bndry M]$, then
$h_*x = (hf)_*[M,\bndry M]$. That is, the map $h_*$ is given simply by composition with representative
maps $f\colon M\to X$.

The following results give some useful ways to think about singular manifolds and the elements they represent.
Suppose that $(X,Y)$ is a pair of $G$-CW($\gamma$) complexes, with $X^{\gamma+k}$ being the skeleta
of $X$.
We then have that
\[
 H_\gamma^G(X,Y) \iso H_\gamma^G(X,X^{\gamma-2}\union Y),
\]
by looking at the long exact sequence of the evident triple.

\begin{lemma}\label{lem:relative}
Let $(X,Y)$ be a pair of $G$-CW($\gamma$) complexes and
let $M$ represent an element of $H_\gamma^G(X,Y)$, so we have a map
\[
 f\colon (M,\bndry M)\to ((X,Y)\times (D(V),S(V)))
\]
for some $V$.
If $M = N\union_{\bndry_1 N} M\phi$, where $N$ is a $(\gamma+V)$-oriented manifold,
then we may assume that $\bndry N$ maps to 
$(X\times D(V))^{\gamma+V-2}\union (X\times S(V))\union (Y\times D(V))$, and then
the homology class represented by $M$ is also represented by the map
\[
 f|N\colon (N,\bndry N)\to (X\times D(V), (X\times D(V))^{\gamma+V-2}\union (X\times S(V))\union (Y\times D(V))).
\]
\end{lemma}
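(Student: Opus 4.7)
The plan is to apply equivariant cellular approximation to the singular part of $M$, extend the resulting homotopy to all of $M$ while leaving $\bndry_2 N$ in place, and then invoke Lemma~\ref{lem:homiso} together with naturality to identify the represented class.

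Writing $M = N\union_{\bndry_1 N} M\phi$ with $\phi\colon \bndry_1 N\to P$, the restriction $f|M\phi$ factors up to homotopy through the deformation retraction $M\phi\to P$ followed by a map $g\colon P\to X\times D(V)$ whose restriction to $\bndry P$ already lands in $(X\times S(V))\cup (Y\times D(V))$. Because each component of $P$ is a singular manifold of dimension at most $\gamma-2$, the space $P$ is a $G$-CW($\gamma$) complex of stable dimension at most $\gamma+V-2$ over $D(V)$. By equivariant cellular approximation in this setting, $g$ is homotopic rel $\bndry P$ to a map landing in the $(\gamma+V-2)$-skeleton of $X\times D(V)$.

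Transport this homotopy through $\phi$ to a homotopy of $f|\bndry_1 N$, and extend it to $N$ using a $G$-invariant collar of $\bndry_1 N$ that is disjoint from $\bndry_2 N$. Gluing with the homotopy of $f|M\phi$ obtained from the mapping-cylinder parametrization produces a representative $f'$ of the same bordism class with $f'|\bndry_2 N = f|\bndry_2 N$ and $f'(M\phi)\subseteq (X\times D(V))^{\gamma+V-2}$, so $f'(\bndry N)$ lies in the required subspace. Now Lemma~\ref{lem:homiso} identifies $[M,\bndry M]$ with $[N,\bndry N]$ via the natural composite (inclusion into the pair with $M\phi$ adjoined, followed by excision). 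Since $f'(M\phi)$ lies in the $(\gamma+V-2)$-skeleton, the pushforward $f'_*$ factors through the refined target pair, and a diagram chase shows that $(f'|N)_*[N,\bndry N]$ and $f'_*[M,\bndry M]$ agree in $H^G_{\gamma+V}(X\times D(V),(X\times S(V))\cup (Y\times D(V)))$ and hence represent the same element of $H^G_\gamma(X,Y)$ under the suspension identification.

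The main obstacle will be coordinating the homotopy across the three pieces of $M$ --- the cylinder, $\bndry_1 N$, and its collar --- so that the mapping-cylinder structure and the placement of $\bndry_2 N$ are preserved throughout. This is standard once one sets up $G$-invariant collars and applies equivariant $G$-CW($\gamma$) cellular approximation, but it needs some care given the $RO(\Pi B)$-graded dimension conventions used here.
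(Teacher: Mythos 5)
Your proposal tracks the paper's proof closely in its first half: both you and the authors use the fact that $P$ (equivalently $M\phi$) has dimension at most $\gamma+V-2$ to homotope $f$, rel $\bndry_2 N$, so that $\bndry_1 N$ lands in the $(\gamma+V-2)$-skeleton. The second half diverges. The paper finishes by exhibiting an explicit bordism $W = M\times[0,1/2]\union N\times[1/2,1]$ from $f$ to $f|N$ over the pair $(X\times D(V), (X\times D(V))^{\gamma+V-2}\union (X\times S(V))\union (Y\times D(V)))$, staying entirely within the singular-bordism picture that Theorem~\ref{thm:singularbordism} sets up. You instead unwind Definition~\ref{def:fundclass}: $[M,\bndry M]$ is by definition the image of $[N,\bndry N]$ under the excision/triple isomorphism of Lemma~\ref{lem:homiso}, and since after the homotopy $f'(M\phi)$ lies in the $(\gamma+V-2)$-skeleton, the square built from $(f'|N)_*$, $f'_*$, and the two isomorphisms commutes by naturality, giving the desired equality of pushforwards. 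Both arguments are correct. The paper's bordism is arguably more elegant here because it keeps the proof in the same geometric language as the rest of the section and avoids the careful bookkeeping of which isomorphism goes which way; your version is a more direct algebraic unwinding of the definitions, and would be the natural thing to write if one didn't want to re-package the homotopy itself as a bordism.
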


\begin{proof}
Because $P$ is $(\gamma+V-2)$-dimensional, the restriction $f|P$ is homotopic to a map
into $(X\times D(V))^{\gamma+V-2}$, with $\bndry P$ mapping to $X\times S(V)\union Y\times D(V)$
throughout the homotopy. We can extend to a homotopy of $f$ to a map taking
$\bndry_1 N$ into $(X\times D(V))^{\gamma+V-2}$ while $\bndry_2 N$ still maps
into $X\times S(V)\union Y\times D(V)$.
This homotopy does not change the represented homology class (and can itself
be viewed as a bordism). So we may assume that $f$ takes $\bndry N$
to $(X\times D(V))^{\gamma+V-2}\union X\times S(V)\union Y\times D(V)$.

Now $W = M\times[0,1/2] \union N\times [1/2,1]$ can be viewed as a bordism from
$f$ to $f|N$ over $(X\times D(V), (X\times D(V))^{\gamma+V-2}\union X\times S(V)\union Y\times D(V))$, 
showing that they represent the same homology element in
\begin{multline*}
 H_{\gamma+V}^G(X\times D(V), (X\times D(V))^{\gamma+V-2}\union (X\times S(V))\union (Y\times D(V))) \\
  \iso H_{\gamma+V}^G((X,Y)\times (D(V),S(V)))
  \iso H_\gamma^G(X,Y),
\end{multline*}
proving the lemma.
\end{proof}

The gist of this lemma is that the homology class represented by a singular manifold
with codimension-2 singularities
is essentially determined by the nonsingular part.
This is the idea behind the following two lemmas.

For the first result, recall that an empty manifold can be considered to have any dimension.

\begin{lemma}\label{lem:empty}
Let $(P,\bndry P)$ be a $(\gamma-k)$-oriented singular manifold over $(X,Y)$, 
with $k\geq 2$, and let $(M,\bndry M)$ be the $\gamma$-oriented singular
manifold $M = \emptyset\union_\emptyset M\phi = P$ for $\phi\colon\emptyset\to P$.
Then $(M,\bndry M)$ is cobordant to the empty manifold, hence represents $0$ in homology.
\end{lemma}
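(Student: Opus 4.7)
The key observation is that the singular manifold $(M, \bndry M)$ has empty nonsingular part, namely $N = \emptyset$ in its decomposition $M = \emptyset \cup_\emptyset M\phi = P$. The plan is to extract the vanishing directly from Lemma~\ref{lem:homiso} and then upgrade it to a cobordism via Theorem~\ref{thm:singularbordism}, without having to write down an explicit null-bordism.

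Since the components of $P$ have dimension $\gamma - k$ with $k \geq 2$, we have $\dim P \leq \gamma - 2$, so $(M, \bndry M)$ has codimension-2 singularities in the sense of Definition~\ref{def:codim2singularities}, and Lemma~\ref{lem:homiso} applies. It yields
\[
    H^G_\gamma(M, \bndry M) \iso H^G_\gamma(N, \bndry N) = H^G_\gamma(\emptyset, \emptyset) = 0.
\]
The fundamental class $[M, \bndry M]$ of Definition~\ref{def:fundclass}, defined as the image of $[N, \bndry N]$ under this isomorphism, is therefore zero. Consequently, for the structural map $f \colon (M, \bndry M) \to (X, Y)$ coming from the structure of $P$ as a singular manifold over $(X, Y)$, the represented class $f_*[M, \bndry M]$ vanishes in $H^G_\gamma(X, Y)$.

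To upgrade this homological vanishing to an actual cobordism, I invoke Theorem~\ref{thm:singularbordism}: the natural map $\bar\Omega^G_{RO(\Pi B)}(X, Y) \to H^G_{RO(\Pi B)}(X, Y)$ is an isomorphism, so since the represented class is trivial, the singular-bordism class of $f \colon (M, \bndry M) \to (X, Y)$ must also be zero. That is precisely the statement that $(M, \bndry M)$ is cobordant to the empty singular manifold over $(X, Y)$, and the ``hence represents $0$ in homology'' clause of the lemma is then either immediate from this cobordism or already in hand from the previous paragraph.

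I do not expect any substantive obstacle: the hard work is packaged into Lemma~\ref{lem:homiso} and Theorem~\ref{thm:singularbordism}, and the only care required is verifying that the codimension-2 hypothesis holds, which is exactly the assumption $k \geq 2$. One could alternatively try to exhibit an explicit null-bordism by inducting on the depth of $P$, building a $(\gamma+1)$-dimensional singular manifold with empty regular part whose boundary is $M$; but this appears substantially less clean than the homological route, so I would not pursue it.
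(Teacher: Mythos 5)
Your proof is correct, but it takes a genuinely different route from the paper's. The paper proves this lemma as a one-liner consequence of Lemma~\ref{lem:relative}: since the nonsingular part $M \smallsetminus P = \emptyset$, the represented class agrees with that of the empty map $\emptyset \to X$, so it is zero (and the explicit bordism $W$ constructed in the proof of Lemma~\ref{lem:relative} furnishes the null-cobordism). You instead apply Lemma~\ref{lem:homiso} directly to the decomposition $M = \emptyset \union_\emptyset M\phi$ to get $H^G_\gamma(M,\bndry M) \iso H^G_\gamma(\emptyset,\emptyset) = 0$, concluding that the fundamental class $[M,\bndry M]$ itself already vanishes before pushing forward, and then invoke the isomorphism of Theorem~\ref{thm:singularbordism} to convert the homological vanishing into null-bordance. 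Your argument is slightly stronger at the intermediate step (you show $[M,\bndry M] = 0$, not merely $f_*[M,\bndry M] = 0$), and it also establishes the two conclusions of the lemma in the reverse order from how they are phrased in the statement; the paper's route is more elementary in that it does not need the full strength of Theorem~\ref{thm:singularbordism} to produce the cobordism, since Lemma~\ref{lem:relative}'s proof already hands it over explicitly. Both are sound, and your verification of the codimension-2 hypothesis from $k \geq 2$ is exactly the point that must be checked.
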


\begin{proof}
The restriction of $f\colon M\to X$ to $M \smallsetminus P = \emptyset$ agrees with the similar
restriction of the map $\emptyset\to X$, so the result is implied by Lemma~\ref{lem:relative}.
\end{proof}

\begin{lemma}\label{lem:nonsingular}
% Let $(M,\bndry M)$ be a $\gamma$-oriented smooth manifold over $(X,Y)$ and form a singular manifold as follows:
% Let $(P,\bndry P)\subset (M,\bndry M)$ be a submanifold of codimension $k\in\Z$ with $k\geq 2$.
% Let $U$ be an open normal tube around $P$ and let $N = M\smallsetminus U$ be its complement.
% Round corners to make $N$ a manifold with boundary $\bndry N = \bndry_1 N\union_\bndry \bndry_2 N$,
% where $\bndry_1 N = \bndry U\intersect (M\smallsetminus U)$. Let $\phi\colon \bndry_1 N\to P$ be the projection of the sphere bundle.
% Let $M'$ be the $\gamma$-oriented singular manifold with codimension-2 singularities
% defined by $M' = N\union_{\bndry_1 N} M\phi$
% (which is diffeomorphic to $M$). Then $(M',\bndry M'; P)$ and $(M,\bndry M)$
% represent the same element in the homology of $(X,Y)$.
Let $(M,\bndry M; P)$ be a $\gamma$-oriented manifold with codimension-2 singularities over $(X,Y)$,
such that $(M,\bndry M)$ is itself a smooth $\gamma$-dimensional manifold.
Then $(M,\bndry M;P)$ and $(M;\bndry M)$ represent the same element in $H_\gamma^G(X,Y)$.
\end{lemma}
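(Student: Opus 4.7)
The plan is to reduce the statement to an equality of two fundamental classes inside $H^G_\gamma(M,\bndry M)$. Since both singular manifolds use the same underlying map $f\colon(M,\bndry M)\to(X,Y)$, it suffices to show that the singular fundamental class $[M,\bndry M;P]$ and the smooth fundamental class $[M,\bndry M]$ agree in $H^G_\gamma(M,\bndry M)$; pushing forward by $f$ then gives that both presentations represent the same element of $H^G_\gamma(X,Y)$. The stable version of the statement reduces to the unstable one by suspending by a representation disk $(D(V),S(V))$.

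To establish this equality of fundamental classes, apply the isomorphism
\[
    \Phi\colon H^G_\gamma(M,\bndry M)\xrightarrow{\iso} H^G_\gamma(N,\bndry N)
\]
of Lemma~\ref{lem:homiso}. By Definition~\ref{def:fundclass}, $\Phi([M,\bndry M;P]) = [N,\bndry N]$ by construction. It therefore remains to show that $\Phi$ also sends the smooth fundamental class $[M,\bndry M]$ to $[N,\bndry N]$.

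Unwinding the proof of Lemma~\ref{lem:homiso}, $\Phi$ is the composition of the inclusion-induced map $H^G_\gamma(M,\bndry M)\to H^G_\gamma(M,\bndry M\cup M\phi)$ (an isomorphism because $M\phi\simeq P$ has dimension at most $\gamma-2$) with the inverse of the excision isomorphism $H^G_\gamma(N,\bndry N)\xrightarrow{\iso}H^G_\gamma(M,\bndry M\cup M\phi)$, excising along the codimension-0 decomposition $M = N\cup M\phi$ with $N\cap M\phi=\bndry_1 N$. Because $M$ is smooth by assumption, $N$ sits inside $M$ as a codimension-0 oriented smooth submanifold-with-corners, and the standard naturality of smooth fundamental classes under such inclusions (\cite[3.11.8]{CostenobleWanerBook}) gives that $[M,\bndry M]$ and $[N,\bndry N]$ are mapped to the same class in $H^G_\gamma(M,\bndry M\cup M\phi)$. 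Hence $\Phi([M,\bndry M]) = [N,\bndry N]$, as required.

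The main obstacle I anticipate is checking the orientation compatibility: the $\gamma$-orientation of $N$ specified as part of the singular-manifold datum $M = N\cup_{\bndry_1 N}M\phi$ must coincide with the orientation $N$ inherits from the smooth $\gamma$-orientation of $M$ as a codimension-0 submanifold. In the $RO(\Pi B)$-graded equivariant setting this amounts to verifying that the stable map $\tau_N\to (f|_N)^*\gamma$ used in the two contexts is the same, which is a matter of unwinding definitions rather than a substantive difficulty; both ultimately come from restricting the stable orientation $\tau_M\to f^*\gamma$ along the codimension-0 inclusion $N\hookrightarrow M$.
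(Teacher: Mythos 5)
Your proof is correct and takes essentially the same approach as the paper. The paper's own proof is a two-sentence pointer to Lemma~\ref{lem:homiso}; what you supply is exactly the unwinding the authors leave implicit: identifying the isomorphism of Lemma~\ref{lem:homiso} as inclusion followed by inverse excision, noting that Definition~\ref{def:fundclass} sends $[N,\bndry N]$ to $[M,\bndry M;P]$ by fiat, and then invoking naturality of smooth fundamental classes under codimension-0 inclusion to see that $[N,\bndry N]$ also hits $[M,\bndry M]$ through the same zig-zag. Your closing remark on orientation compatibility correctly identifies the one point that needs to be checked, and your resolution of it is right.
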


\begin{proof}
% The restrictions to $N$ of $f\colon M\to X$ and $f'\colon M'\to X$ are the same,
% so Lemma~\ref{lem:relative} implies that $f$ and $f'$ represent the same homology element.
Write $M = N\union_{\bndry_1 N} M\phi$ as a singular manifold.
Then Lemma~\ref{lem:homiso} tells us that $H_\gamma^G(N,\bndry N) \iso H_\gamma^G(M,\bndry M)$,
from which the current lemma follows.
\end{proof}

We will use this primarily in the nonequivariant case, where it says that considering a codimension-2 submanifold
as singular part doesn't change the homology class.
Equivariantly, there will be interesting cases where we start with a smooth manifold $M$ and
take a submanifold $P$ as singular part, such that we may consider $M\smallsetminus P$ to have a different 
equivariant dimension than $M$. In that case, $M$ and $(M;P)$ will represent 
two different equivariant homology elements, in two different gradings, that restrict to the same
nonequivariant homology element.
See \S\ref{subsub:xi} for an example.

Finally, here is a useful general result that crystallizes the idea that it is the
nonsingular part of a singular manifold that determines what it represents in homology.

\begin{lemma}\label{lem:replacement}
Let $(M_1,\bndry M_1)$ and $(M_2,\bndry M_2)$ be two stable $\gamma$-oriented manifolds with
codimension-2 singularities over $(X,Y)$ related as follows:
Suppose $M_i = N\union_{\bndry_1 N} M\phi_i$ for $i = 1$ and $2$, for the same smooth
$\gamma$-oriented manifold $N$ over $(X,Y)$.
Then $[M_1,\bndry M_1]_* = [M_2,\bndry M_2]_* \in H_\gamma^\GG(X,Y)$.
\end{lemma}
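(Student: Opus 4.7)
The plan is to reduce the equality to comparing two pushforwards of a single fundamental class $[N,\bndry N]$ that clearly agree. Per Definition~\ref{def:fundclass}, each $[M_i,\bndry M_i]$ is by construction the image of $[N,\bndry N]$ under the isomorphism $H^\GG_\gamma(N,\bndry N)\iso H^\GG_\gamma(M_i,\bndry M_i)$ supplied by Lemma~\ref{lem:homiso}. The decisive observation will be that this isomorphism is simply the pushforward along the inclusion $\iota_i\colon(N,\bndry N)\hookrightarrow(M_i,\bndry M_i)$.

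The first step is to verify that claim by tracing through the proof of Lemma~\ref{lem:homiso}. Since $\iota_i(\bndry_1 N)\subset M\phi_i$, the inclusion of pairs factors as
\[
(N,\bndry_1 N\union\bndry_2 N)\longrightarrow(M_i,M\phi_i\union\bndry M_i)\longrightarrow(M_i,\bndry M_i),
\]
where the first map induces an excision isomorphism on $H^\GG_\gamma$, exactly as in the proof of Lemma~\ref{lem:homiso}, and the second induces an isomorphism because in the long exact sequence of the triple $(M_i,M\phi_i\union\bndry M_i,\bndry M_i)$ the neighboring terms vanish: up to excision they compute $H^\GG_*(M\phi_i,M(\bndry\phi_i))\iso H^\GG_*(P_i,\bndry P_i)$, and $P_i$ is a cell complex of dimension at most $\gamma-2$. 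It follows that $[M_i,\bndry M_i]=(\iota_i)_*[N,\bndry N]$.

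The second step is then immediate. Since $M_i=N\union_{\bndry_1 N}M\phi_i$ is formed by gluing over $B$, the structure map $f_i\colon M_i\to X$ restricts on $N$ to the structure map of $N$ over $B$, which by hypothesis is the same for $i=1$ and $i=2$. Therefore
\[
[M_i,\bndry M_i]_*=(f_i)_*(\iota_i)_*[N,\bndry N]=(f_i|_N)_*[N,\bndry N],
\]
and the right-hand side does not depend on $i$. The stable case is handled by the usual suspension reduction, replacing $(X,Y)$ with $(X,Y)\times(D(V),S(V))$ and $\gamma$ with $\gamma+V$. The main (and only mildly subtle) obstacle is the bookkeeping of the first step, confirming that the Lemma~\ref{lem:homiso} isomorphism is induced by $\iota_i$ in the correct direction; beyond that the argument is essentially formal.
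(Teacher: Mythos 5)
Your argument has a genuine gap in the passage from $(f_i)_*(\iota_i)_*[N,\bndry N]$ to ``$(f_i|_N)_*[N,\bndry N]$,'' and it is precisely the gap that the paper's proof closes by invoking Lemma~\ref{lem:relative}. The key problem is that $\iota_i\colon N\hookrightarrow M_i$ is \emph{not} a map of pairs $(N,\bndry N)\to (M_i,\bndry M_i)$: the piece $\bndry_1 N$ of $\bndry N$ lands in $M\phi_i$, not in $\bndry M_i = \bndry_2 N\union M(\bndry\phi_i)$. Your first step correctly identifies that the Lemma~\ref{lem:homiso} isomorphism is a zig-zag --- a pushforward into $(M_i, M\phi_i\union\bndry M_i)$ followed by the \emph{inverse} of the map from $(M_i,\bndry M_i)$ --- but you then treat the composite ``$(\iota_i)_*$'' as if it were an honest pushforward and apply functoriality to factor $(f_i)_*(\iota_i)_*$ through $f_i|_N$. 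Functoriality does not give that, because one of the arrows in the zig-zag is a wrong-way map. If you try to make the factorization precise, you are forced to push into the pair $(X, f_i(M\phi_i)\union Y)$, whose subspace part depends on $\phi_i$, and the map $H^\GG_\gamma(X,Y)\to H^\GG_\gamma(X, f_i(M\phi_i)\union Y)$ need not be injective; so ``$(f_i|_N)_*[N,\bndry N]$'' does not live in $H^\GG_\gamma(X,Y)$ and even if placed in the enlarged group it cannot be compared across the two values of $i$.

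The paper's actual argument, via Lemma~\ref{lem:relative}, supplies exactly the missing ingredients: it first homotopes $f_i$ so that the singular part (and hence $\bndry_1 N$) lands in the $(\gamma+V-2)$-skeleton of $X\times D(V)$, which is a subspace independent of $\phi_i$, then uses a bordism $M_i\times[0,1/2]\union N\times[1/2,1]$ to identify $[M_i,\bndry M_i]_*$ with the class represented by $f_i|_N$ into the pair $(X\times D(V),\ (\text{skeleton})\union X\times S(V)\union Y\times D(V))$, and finally uses that the map from $H^\GG_\gamma(X,Y)$ into that group \emph{is} an isomorphism (by looking at the long exact sequence of the triple). That is what legitimizes reading off independence from $i$ out of $f_1|_N = f_2|_N$. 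To repair your proof you would essentially need to reproduce that homotopy-plus-skeleton argument, i.e.\ to invoke (or reprove) Lemma~\ref{lem:relative}; asserting $(f_i)_*(\iota_i)_* = (f_i|_N)_*$ by itself does not suffice.
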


\begin{proof}
This follows directly from Lemma~\ref{lem:relative}.
\end{proof}

Although we have been talking about representing homology classes, our real interest is in representing
cohomology classes. 

\begin{definition}\label{def:cohomrep}
If $X$ is a smooth closed $G$-manifold of dimension $\tau$, we say that an element $x\in H_G^\gamma(X_+)$
is {\em represented by} $f\colon M\to X$ if $M$ is a $(\tau-\gamma)$-oriented singular manifold with codimension-2 singularities
such that $f_*[M] = x\cap [X] \in H^G_{\tau-\gamma}(X_+)$. In other words, $f_*[M]$ is the Poincar\'e dual of $x$.
We write $x = [M]^*$ when $f$ is understood.
As usual, if we want to specify the singular part, we will write $[M;P_1,P_2,\ldots]^*$.
\end{definition} 

We can also interpret this in terms of pushforwards.

\begin{definition}
Given a map $f\colon M\to X$, with $M$ a $\tau_M$-dimensional closed singular $G$-manifold with codimension-2 singularities
and $X$ a $\tau_X$-dimensional closed smooth $G$-manifold, the pushforward map 
\[
 f_!\colon H_G^{\tau_M-\alpha}(M_+)\to H_G^{\tau_X-\alpha}(X_+)
\]
is defined for $\alpha\in RO(\Pi X)$ by the following diagram:
\[
 \xymatrix{
  H_G^{\tau_M-\alpha}(M_+) \ar[r]^{f_!} \ar[d]_{-\cap[M]}
    & H_G^{\tau_X-\alpha}(X_+) \ar[d]^{-\cap[X]}_{\iso} \\
  H^G_{\alpha}(M_+) \ar[r]_{f_*} & H^G_{\alpha}(X_+)
 }
\]
where the rightmost vertical map is the Poincar\'e duality isomorphism.
\end{definition}

Definition~\ref{def:cohomrep} then says that $[M]^* = f_!(1)$ where $1\in H_G^0(M_+)$ is the identity.
Note that, in particular, we have $1 = [X]^* \in H_G^0(X_+)$ using
the identity map $X\to X$.

If $X$ and $Y$ are closed smooth manifolds and $h\colon X\to Y$, then the pushforward map
$h_!\colon H_G^{\tau_X - \alpha}(X_+) \to H_G^{\tau_Y - \alpha}(Y)$
takes an element $x$ represented by $f\colon M\to X$ to $h_! x$ represented by $hf\colon M\to Y$.
It is less easy to describe the map $h^*$ in these terms.

\subsection{Representing elements in the cohomology of a point}\label{sub:cohompoint}
As first examples of how cohomology elements can be represented by singular manifolds, we look at several
elements in the cohomology of a point for $G = \GG$.
(See the appendix for a summary of this cohomology.)
We regard the point as a closed 0-dimensional $\GG$-manifold, so Poincar\'e duality
is the identification $H_\GG^\alpha(S^0) \iso H^\GG_{-\alpha}(S^0)$.
It is easy to see in this case that products in $H_\GG^{RO(\GG)}(S^0)$
correspond to products of the representing manifolds.

\subsubsection{Elements of the zeroth cohomology}\label{subsub:zeroth}
We start by looking at elements in
\[
 H_\GG^0(S^0) \iso A(\GG),
\]
the Burnside ring.
These are represented by 0-dimensional manifolds, that is to say, finite $\GG$-sets.
There are two obvious cases: We can have a single fixed point, which represents the identity, 1,
or we could have a free orbit $\GG/e$ representing the class $g = [\GG/e]\in A(\GG)$.
But these both assume the usual positive orientation.
Putting a negative orientation on the free orbit represents $-g$.

Orientations of a fixed point are more interesting,
because there are four stable orientations of any representation of $\GG$,
determined by the orientation of the trivial part and the nontrivial part of the (virtual) representation.
We analyze the remaining three cases using the fact that elements of $A(\GG)$ are determined by their
nonequivariant restrictions and their fixed sets.
Consider a point with the orientation that is positive on the trivial part and negative on the nontrivial part.
This gives a negative nonequivariant orientation but a positive orientation on taking fixed sets.
The element of the Burnside ring this determines is $1-g$, thus a point with this orientation represents $1-g$.
Similarly, a point with the orientation that is negative on the trivial part and positive on the nontrivial part
represents $-1$, while a point with negative orientations on both parts represents $g-1$.
Note that the units in $A(\GG)$ are exactly the four elements $\pm 1, \pm(1-g)$,
each of which squares to 1.

Put another way: For a point, changing the orientation of the trivial part multiplies by $-1$
while changing the orientation of the nontrivial part multiplies by $1-g$.

\subsubsection{The element $\iota$}\label{sec:iota}
Write $\R^\sigma$ for the nontrivial 1-dimensional real representation of $\GG$.
Consider the element
\[
 \iota \in \Mackey H_{\GG}^{\sigma-1}(S^0)(\GG/e) \iso H^{\sigma-1}(S^0;\ZZ) \iso H_{\GG}^{\sigma-1}(\GG/e_+).
\]
Here is the only place, other than the appendix, where we explicitly think of cohomology as Mackey functor-valued,
so that $\iota$ is an element of that Mackey functor evaluated at the free orbit $\GG/e$.
Via the indicated isomorphisms, we also think of $\iota$ as 
an element of nonequivariant cohomology graded on $RO(\GG)$ via the dimension
homomorphism $RO(\GG)\to\ZZ$, and
as an element of the equivariant cohomology of $\GG/e_+$.
As an element of nonequivariant cohomology, it is represented by the
virtual manifold given by the
identity function $D(\R)\to D(\R^\sigma)$, with the representations there to
make the grading clear.
As an element of the equivariant cohomology of $\GG/e_+$, it
is represented by the $(1-\sigma)$-dimensional stable manifold
\[
 \GG\times (D(\R), S(\R)) \to \GG \times (D(\R^\sigma), S(\R^\sigma))
\]
induced by the nonequivariant identification $D(\R)\to D(\R^\sigma)$,
where we give $\GG\times D(\R)$ the canonical (positive) 1-orientation.

\subsubsection{The element $\xi$}\label{subsub:xi}
The class
\[
 \xi \in H_\GG^{2\sigma-2}(S^0)
\]
can be represented by the $(2-2\sigma)$-dimensional stable singular manifold represented by the identity map $D(\R^{2\sigma})\to D(\R^{2\sigma})$,
where we give the source $M = D(\R^{2\sigma})$ the following
structure as a $2$-oriented singular manifold:
Let $U\subset D(\R^{2\sigma})$ be the open disc around the origin of radius $1/2$
and let $N = M - U$, an annulus. Using the fact that $G$ acts freely on $N$, we can give it
a $2$-orientation. (Note that we cannot do this on all of $M$, because of the fixed point at the origin,
where the tangent space is nontrivial.) Let $\bndry_1 N$ be the inner boundary and $\bndry_2 N$ be the outer
one, let $P$ be a point, considered as a $2-2 = 0$-dimensional manifold, and let $\phi\colon \bndry_1 N\to P$ be the projection. Then
$M$ is homeomorphic to $N\union_{\bndry_1 N} M\phi$ and this structure makes it a $2$-oriented singular manifold.
To emphasize this singular structure, we will use the notation introduced earlier, $M = (D(\R^{2\sigma});0)$,
where $0$ denotes the origin considered as the singular part of $D(\R^{2\sigma})$.

To see that $(D(\R^{2\sigma});0)\to D(\R^{2\sigma})$ represents $\xi$, we use that $\xi$
is characterized by the fact that
$\rho(\xi) = \iota^2$.
To apply $\rho$ to $(D(\R^{2\sigma});0)\to D(\R^{2\sigma})$, we simply forget the action of $\GG$, while remembering
the singular structure. Now we are in a situation where we can apply Lemma~\ref{lem:nonsingular}
(with the trivial group acting),
to say that the nonequivariant map $(D(\R^{2\sigma});0)\to D(\R^{2\sigma})$ represents the same element as
the identity map $D(\R^2)\to D(\R^{2\sigma})$,
which represents $\iota^2$; see \S\ref{sec:iota} above.

We can also use this representation to see that $\xi^\GG = 0$: $(D(\R^{2\sigma});0)^\GG = (0;0)$ 
as a (nonequivariant) $2$-oriented singular manifold and we can apply
Lemma~\ref{lem:empty} to conclude that $(0;0)$ represents $0$.

We could try something analogous to represent an element in $H_\GG^{2-2\sigma}(S^0)$ inverse to $\xi$,
but it will not work:
Starting with the identity $D(\R^2)\to D(\R^2)$, we could try to make the source $D(\R^2)$ into a singular
$2\sigma$-oriented manifold. We can give the annulus $N$ a $2\sigma$-orientation, but $P = 0$ would have to be
considered as a $(2\sigma-2)$-dimensional manifold, which is not possible.
This helps explain the fact that $\xi$ is not invertible in $H_\GG^{RO(G)}(S^0)$.

Positive powers $\xi^n$ can be given a similar description, using $D(\R^{2n\sigma})$ with
singular part the origin.

\subsubsection{The elements $e^n$ and $e^{-n}\kappa$}\label{subsub:e}
For $n > 0$, $e^n$ is represented by the stable $(-n\sigma)$-dimensional manifold
$0\to D(\R^{n\sigma})$.
To see this, we use that $e^n\in H_\GG^{n\sigma}(S^0)$ is characterized by the fact that
$(e^n)^\GG = 1$, and verify that 
\[ 
    [0\to D(\R^{n\sigma})]^\GG = [0\to 0] = 1\in H^0(S^0).
\]

Similarly, for $n > 0$, $e^{-n}\kappa \in H_\GG^{-n\sigma}(S^0)$ is characterized by
$(e^{-n}\kappa)^\GG = 2$. We can represent it by the $\GG$-manifold $S(\R^{1+n\sigma})$,
the unit sphere in $\R^{1+n\sigma}$,
with the following orientation:
% Call the fixed point $(1,0)\in S(\R\dirsum\R^{n\sigma})$ the north pole and $(-1,0)$ the south pole.
% Think of orienting the stable normal bundle rather than the tangent bundle, so
% embed $S(\R^{1+n\sigma})$ in $\R^{1+n\sigma+\sigma}$ with normal bundle $\nu\dirsum\R^\sigma$, where $\nu$ is the
% normal bundle to the usual embedding in $\R^{1+n\sigma}$.
% There is a canonical trivialization $\nu\dirsum\R^\sigma \iso \R\dirsum\R^\sigma$ that
% maps $\nu\to\R$ with the outward normal mapping in the positive direction.
% We use instead a different trivialization: Replace the canonical one with one you get by
% rotating, as you go from the north to the south pole,
% so that $\nu$ still points in the positive direction at the north pole but now points in the negative
% direction at the south pole; the $\R^\sigma$ component will also point in the opposite direction at the south pole
% to compensate.
% Comparing with the ambient space $\R^{1+n\sigma+\sigma}$, at the south pole we are orienting
% the trivial part in the positive direction and the nontrivial part in the negative direction
% (whereas the canonical trivialization would do the reverse).
Embed $S(\R^{1+n\sigma}) \includesin \R^{1+n\sigma}$ and use as the orientation the identification of the stable tangent bundle
(thought of as the sum of the tangent bundle with the normal bundle, which is trivial) with
$S(\R^{1+n\sigma})\times \R^{1+n\sigma}$.
Nonequivariantly, this gives the usual orientation, but equivariantly it does something interesting.
Call the fixed point $(1,0)\in S(\R\dirsum\R^{n\sigma})$ the north pole and $(-1,0)$ the south pole.
Recalling the discussion of orientations of points in Section~\ref{subsub:zeroth},
the orientation at the north pole is positive in both its trivial and nontrivial parts.
However, at the south pole, if we transport the nonequivariant orientation of the nontrivial part
(that is, the tangent space)
from north to south, we see that the map to $\R^{n\sigma}$ reverses orientation, while on the trivial part
the south pole is oriented in the positive direction. That is, the south pole is oriented positively
in the trivial direction and negatively in the nontrivial direction, the opposite of how we would usually
think about it in terms of an outward normal direction.
Thus, when taking fixed points, the two fixed points embedded in $(\R^{1+n\sigma})^\GG = \R$
will both be oriented in the positive direction, hence their sum represents 2 rather than 0,
making $S(\R^{1+n\sigma})$ with this orientation represent $e^{-n}\kappa$ as claimed.

We can also check from this description that $e^n\cdot e^{-n}\kappa = \kappa$:
%(Recall the discussion of orientations of points in section~\ref{subsub:zeroth}.)
The product is represented by the map $S(\R^{1+n\sigma}) \to 0 \includesin D(\R^{n\sigma})$, and there is an obvious homotopy
to a map transverse to the origin with discs of the form $D(\R^{n\sigma})$ at the two poles both
mapping by the identity. The inverse image of the origin is then the two poles,
with the north pole still carrying the orientation that is positive in both the trivial and nontrivial parts
and the south pole oriented in the positive direction on the trivial part and the negative direction on the nontrivial part.
Thus, from our discussion in Section~\ref{subsub:zeroth}, the north pole represents 1 while the south pole represents $1-g$,
so their sum represents $2 - g = \kappa$ as claimed.

This discussion also gives us the case $n = 0$, where we can write $\kappa = [S(\R)]^*$, where
$S(\R)$ is oriented as above.

% \subsubsection{Products $e^m\xi^n$, $m$, $n \geq 0$}
% Taking products of manifolds, $e^m\xi^n$ is represented by the stable singular manifold given by the inclusion
% \[
%     (D(\R^{2n\sigma});0)\includesin D(\R^{(m+2n)\sigma}).
% \]
% One thing interesting about these elements is that, if $m$ and $n$ are both greater than 0,
% then $2e^m\xi^n = 0$. [Fill in a bordism to illustrate this.]

\subsection{Representing elements in the cohomology of $\Xpq pq$}\label{sub:cohomxpq}
We now turn to the cohomology of finite projective spaces, as summarized in the Appendix.
Throughout we will write $B = \Xpq \infty\infty$ and grade cohomology on $RO(\Pi B)$.

We noted in \cite{CHTFiniteProjSpace} that, if $0\leq p'\leq p$ and $0\leq q'\leq q$, then
\[
    \cwd^{\;p-p'}\cxwd^{\;q-q'} = [\Xpq{p'}{q'}]^* = i_!(1)
\]
where $i\colon \Xpq{p'}{q'}\to \Xpq pq$ is the inclusion. It follows that
\[
    x\cwd^{\;p-p'}\cxwd^{\;q-q'} = i_!i^*(x)
\]
for $x\in H_\GG^{RO(\Pi B)}(\Xpq pq_+)$.
So it suffices in many cases to examine how to represent elements $x$ that are
not multiples of $\cwd$ or $\cxwd$.
In particular, if we want to represent the standard basis given in the Appendix,
then the only thing left to consider is powers of $\cxwt$ and $\cwt$.

\subsubsection{The elements $\cxwt^k$ and $\cwt^k$}\label{subsub:zetas}
For the purpose of representing the terms that appear in 
Propositions~\ref{prop:nIV less nII} and~\ref{prop:nII less nIV},
we need to look at $\cxwt^k$ for $0 < k \leq q$ and at $\cwt^k$ for $0 < k \leq p$.
We continue to use the notation from Definition~\ref{def:singularmfld}.

\begin{proposition}\label{prop:powersOfZeta}
If $0 < k \leq q$ then
\[
 \cxwt^k = [\Xpq pq; \Xpq p{(q-k)}]^*.
\]
If $0 < k \leq p$, then
\[
    \cwt^k = [\Xpq pq; \Xpq {(p-k)}q]^*.
\]
\end{proposition}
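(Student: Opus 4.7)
The plan is to prove both formulas in parallel, by induction on $k$. By the evident symmetry between the trivial and sign representations (which interchanges $\cwd\leftrightarrow\cxwd$, $\cwt\leftrightarrow\cxwt$, and the two components of the fixed set), we describe the argument for $\cxwt^k$; the parallel argument handles $\cwt^k$.

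The base case $k=1$ carries the essential geometric content. The subspace $\Xpq p{(q-1)} \hookrightarrow \Xpq pq$ is a closed equivariant smooth submanifold of complex codimension one, cut out by the vanishing of the last $\Cq{}$-coordinate; its normal bundle is a complex $\GG$-line bundle whose fiber action involves the sign representation $\sigma$. Let $U$ be an open equivariant tubular neighborhood of $\Xpq p{(q-1)}$, set $N = \Xpq pq \smallsetminus U$, and let $\phi\colon \bndry_1 N \to \Xpq p{(q-1)}$ be the boundary sphere bundle projection. This realizes $\Xpq pq$ as the singular manifold $N\cup_{\bndry_1 N}M\phi$. The nontrivial point is to orient $N$ stably, which I would do by rotating the $\sigma$-twisted normal direction to $\Xpq p{(q-1)}$ into a trivial direction on the complement, in direct analogy with the local model $(D(\R^{2\sigma}); 0)$ for $\xi$ of \S\ref{subsub:xi}. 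The resulting oriented singular manifold has equivariant dimension matching the degree of $\cxwt$, and its represented class equals $\cxwt$ itself, as checked by restricting to the two fixed components $\Xp p$ and $\Xq q$ of $\Xpq pq$ and comparing against the values recorded in the appendix.

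For the inductive step, I would use that products of cohomology classes correspond to transverse intersection of representing singular manifolds, with singular parts combining by intersection. Having represented $\cxwt$ by $(\Xpq pq; \Xpq p{(q-1)})$ in the base case and $\cxwt^{k-1}$ by $(\Xpq pq; \Xpq p{(q-k+1)})$ by the inductive hypothesis, I would choose a generic translate of $\Xpq p{(q-1)}$ whose intersection with $\Xpq p{(q-k+1)}$ is $\Xpq p{(q-k)}$; the intersection product then yields the representation $\cxwt^k = [\Xpq pq; \Xpq p{(q-k)}]^*$, as required.

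The main obstacle is the base case: producing the orientation on $N$ compatibly with the global geometry of $\Xpq pq$, and then matching the represented class against $\cxwt$ on the nose, i.e.\ not off by a unit of the Burnside ring $A(\GG)$. This last verification is in principle controlled by restrictions to the two fixed components $\Xp p$ and $\Xq q$, but keeping careful track of the sign conventions in the $\sigma$-twisted normal direction (analogous to the local orientation analysis in the description of $e^{-n}\kappa$ in \S\ref{subsub:e}) is the delicate step.
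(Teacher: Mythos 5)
The base case $k=1$ is close in spirit to the paper's argument (check the dimension, then verify against the restriction $\rho$ and the fixed-point map, leaning on Lemmas~\ref{lem:empty} and~\ref{lem:nonsingular}), though you reach for a tubular-neighborhood construction where the paper just uses the two restriction maps to pin down the class in $A(\GG)$. The inductive step, however, has a genuine gap.

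The principle "products of cohomology classes correspond to transverse intersection of representing singular manifolds" does not apply in this situation and is not established anywhere in the paper. The class $\cxwt$ lives in grading $\chiw-2$, and $|\chiw-2|=0$: it has \emph{virtual codimension zero}. Its representative is the identity map on $\Xpq pq$ equipped with a twisted orientation and a singular structure, not a proper submanifold of positive codimension. There is nothing to intersect transversally. If one instead tries to form the fiber product $M_1\times_X M_2\to X$ of the two representatives (the correct translation of the projection formula $xy=f_!f^*(y)$ into geometry), the underlying space is again $\Xpq pq$ and the singular locus one inherits is the \emph{union} of the two singular parts, $\Xpq p{(q-1)}\union\Xpq p{(q-k+1)}$, not their intersection $\Xpq p{(q-k)}$. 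But that union is $\Xpq p{(q-1)}$, whose codimension in the required dimension $p\omega+(q-k)\chiw+2(k-1)$ equals $(k-1)(2-\chiw)+2$. For $k>1$ this is not an integer on the component $\Xp p\subset\Xpq p{(q-1)}$ (where $\chiw=2\sigma$), so condition (3) of Definition~\ref{def:singularmfld} fails and this is not a valid singular manifold with codimension-2 singularities. Choosing generic translates does not repair this: it only changes \emph{which} union of subvarieties you get, not the fact that the natural singular locus is a union rather than the intersection $\Xpq p{(q-k)}$ you want.

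The paper sidesteps all of this by arguing directly for all $k$ at once: since $\cxwt^k$ generates a copy of $A(\GG)$, it suffices to show that the proposed representative $(\Xpq pq;\Xpq p{(q-k)})$ is a bona fide singular manifold in the correct dimension (the singular part has honest integer codimension $2k$), and that its restriction $\rho$ and its fixed-point image agree with those of $\cxwt^k$. That verification uses Lemma~\ref{lem:nonsingular} on $(\Xp{p+q};\Xp{p+q-k})$ and on $(\Xq q;\Xq{q-k})$, and Lemma~\ref{lem:empty} on $(\Xp p;\Xp p)$. Your base case contains the core of this, but the induction replaces the clean direct argument with an unproved (and, as the codimension calculation shows, not-quite-right) product formula.
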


\begin{proof}
Consider the case of $\cxwt^k$; the case of $\cwt^k$ is similar.
From \cite{CHTFiniteProjSpace}, we know that $\cxwt^k$ generates
$H_\GG^{k(\chiw-2)}(\Xpq pq_+) \iso A(\GG)$, hence is determined by its restriction to
nonequivariant cohomology,
\[
 \rho(\cxwt^k) = 1 \in H^0(\Xp{p+q}_+)
\]
and its fixed points,
\[
 (\cxwt^k)^\GG = (0,1) \in H^{-2k}(\Xp p_+)\dirsum H^0(\Xq q_+).
\]
Because $\cxwt^k$ lives in grading $k(\chiw-2)$, we need its representing manifold to have this codimension.
The dimension of the ambient $\Xpq pq$ is $p\omega+q\chiw-2$, so the representing manifold needs to have dimension
\[
 p\omega + q\chiw - 2 - k(\chiw - 2) = p\omega + (q-k)\chiw + 2(k-1).
\]
The complement $\Xpq pq \smallsetminus \Xpq p{(q-k)}$ has no fixed points over $\Xp p$ and, over
$\Xq q$, $\chiw = 2$, so we have
\[
 p\omega + q\chiw - 2 = p\omega + (q-k)\chiw + 2(k-1) \qquad\text{away from $\Xp p$},
\]
which is part of what we need. The other thing we need is for the singular part to have integer codimension of at least two, but
the dimension of $\Xpq p{(q-k)}$ is $p\omega+(q-k)\chiw-2$, so its codimension is $2k$, hence it satisfies that requirement.

At this point, we know that $(\Xpq pq; \Xpq p{(q-k)})$ is a singular manifold of the right dimension to represent $\cxwt^k$;
now we have to check that it has the right restriction and fixed points.
Its nonequivariant restriction is the singular manifold $(\Xp{p+q}; \Xp{p+q-k})$.
By Lemma~\ref{lem:nonsingular} (applied with the ambient group being trivial), this represents the same element
as the smooth manifold $\Xp{p+q}$, which represents the identity, as required.
The fixed points are
\[
 (\Xpq pq; \Xpq p{(q-k)})^\GG = (\Xp p; \Xp p) \disjunion (\Xq q; \Xq{(p-k)})
\]
with the first component mapping to $\Xp p$ and the second to $\Xq q$.
Using Lemma~\ref{lem:empty}, the first component represents 0.
By Lemma~\ref{lem:nonsingular}, the second represents 1.
Hence we have verified that we have the correct restriction and fixed points, so
$\cxwt^k = [\Xpq pq; \Xpq p{(q-k)}]^*$ as claimed.
\end{proof}

From the equation $x\cwd^{\;p-p'}\cxwd^{\;q-q'} = i_!i^*(x)$ we noted earlier, we get the following.

\begin{corollary}\label{cor:powersOfZeta}
If $p'\leq p$ and $0 < k \leq q'\leq q$, then
\[
    \cxwt^k\cwd^{\;p-p'}\cxwd^{\;q-q'} = [\Xpq{p'}{q'}; \Xpq {p'}{(q'-k)}]^*.
\]
If $0 < k \leq p'\leq p$ and $q'\leq q$, then
\[
    \cwt^k\cwd^{\;p-p'}\cxwd^{\;q-q'} = [\Xpq{p'}{q'}; \Xpq {(p'-k)}{q'}]^*.
\]
\qed
\end{corollary}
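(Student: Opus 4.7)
The plan is to deduce the corollary directly from Proposition~\ref{prop:powersOfZeta} together with the identity
\[
    x\cwd^{\;p-p'}\cxwd^{\;q-q'} = i_!i^*(x),
\]
recalled just before the statement, where $i\colon \Xpq{p'}{q'}\hookrightarrow \Xpq pq$ is the standard inclusion. This identity holds for any $x\in H_\GG^{RO(\Pi B)}(\Xpq pq_+)$ and reduces the claim to a computation inside $\Xpq{p'}{q'}$.

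To carry it out, I would first take $x=\cxwt^k$ (respectively $x=\cwt^k$). Since both sides of the formula are computed from the universal classes on $B=\Xpq\infty\infty$ by pulling back along the canonical maps, we have $i^*(\cxwt^k)=\cxwt^k$ and $i^*(\cwt^k)=\cwt^k$ as classes in the cohomology of $\Xpq{p'}{q'}$. Under the hypothesis $0<k\leq q'$ (resp.\ $0<k\leq p'$), Proposition~\ref{prop:powersOfZeta} applies inside $\Xpq{p'}{q'}$ and gives
\[
    i^*(\cxwt^k) = [\Xpq{p'}{q'};\Xpq{p'}{(q'-k)}]^*,
    \qquad
    i^*(\cwt^k) = [\Xpq{p'}{q'};\Xpq{(p'-k)}{q'}]^*,
\]
with the representing singular manifolds sitting over $\Xpq{p'}{q'}$ via the identity map.

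Next, I would apply $i_!$. By the description of pushforward on representing manifolds recorded just after Theorem~\ref{thm:singularbordism} and Definition~\ref{def:cohomrep}, pushforward along a closed inclusion of smooth $\GG$-manifolds is realized on the level of representatives by post-composition with $i$. Thus $i_!i^*(\cxwt^k)$ is represented by the singular manifold $(\Xpq{p'}{q'};\Xpq{p'}{(q'-k)})$ mapped into $\Xpq pq$ by $i$, and similarly for $\cwt^k$. Both desired equalities in the corollary follow immediately.

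The only real point to check is that the formula $x\cwd^{\;p-p'}\cxwd^{\;q-q'}=i_!i^*(x)$ applies in the cases at hand and that the grading conditions $0<k\leq q'$ (or $\leq p'$) are exactly what is needed for Proposition~\ref{prop:powersOfZeta} to apply in the smaller ambient space; these are built into the hypotheses. There is no calculational obstacle, so the proof amounts to a one-line assembly and can be recorded simply as follows: apply $i_!$ to the identity of Proposition~\ref{prop:powersOfZeta} in $\Xpq{p'}{q'}$ and invoke the projection-type identity above.
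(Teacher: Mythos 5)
Your proof is correct and follows exactly the argument the paper intends: the paper's ``proof'' is nothing more than the remark preceding the corollary that $x\cwd^{\;p-p'}\cxwd^{\;q-q'} = i_!i^*(x)$, and you have simply spelled out the straightforward application of this identity together with Proposition~\ref{prop:powersOfZeta} (applied inside $\Xpq{p'}{q'}$) and the fact that $i_!$ acts on representatives by post-composition with $i$. No gaps, no deviation from the paper's route.
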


\subsubsection{The elements $\cxwt^i\cwt^j$}
We also need to know how to represent $\cxwt^i\cwt^j$ when $1\leq i\leq q$ and $1\leq j \leq p$.
This product lives in grading $(j-i)\omega -2j + 2i\sigma$ and is characterized by the fact that
$\rho(\cxwt^i\cwt^j) = 1$ and $(\cxwt^i\cwt^j)^\GG = (0,0)$. 
The following is then proved in much the same way as Proposition~\ref{prop:powersOfZeta}.

\begin{proposition}
If $1\leq i\leq q$ and $1\leq j \leq p$, then
\[
    \cxwt^i\cwt^j = [\Xpq pq; \Xpq{p-j}q \union \Xpq p{(q-i)}; \Xpq{p-j}{(q-i)}]^*.
\]
\qed
\end{proposition}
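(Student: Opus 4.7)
The plan is to follow the blueprint of Proposition~\ref{prop:powersOfZeta}. Because $\cxwt^i\cwt^j$ generates a copy of $A(\GG)$ inside $H_\GG^{(j-i)\omega - 2j + 2i\sigma}(\Xpq pq_+)$, it is determined by its nonequivariant restriction $\rho(\cxwt^i\cwt^j) = 1$ and its geometric fixed-point restriction $(\cxwt^i\cwt^j)^\GG = (0,0)$. Thus it suffices to check that the proposed representative $(\Xpq pq;\ \Xpq{p-j}q \cup \Xpq p{(q-i)};\ \Xpq{p-j}{(q-i)})$ has these two invariants.

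First I would verify that this is a legitimate singular manifold with codimension-2 singularities in the sense of Definition~\ref{def:singularmfld}. The ambient $\Xpq pq$ is smooth, while the first-level singular locus $\Xpq{p-j}q \cup \Xpq p{(q-i)}$ consists of two smooth subvarieties meeting along $\Xpq{p-j}{(q-i)}$; the second-level singular part absorbs exactly this failure of smoothness. The successive integer codimensions $2j$, $2i$, and $2(i+j)$ all exceed $1$, so the codimension-2 hypothesis holds. The nonsingular complement $\Xpq pq \setminus (\Xpq{p-j}q \cup \Xpq p{(q-i)})$ is $\GG$-free, since every $\GG$-fixed point of $\Xpq pq$ already lies in one of the two first-level components; this is what allows us to orient it in the grading $(j-i)\omega - 2j + 2i\sigma$, matching dimensions in the same manner as in the proof of Proposition~\ref{prop:powersOfZeta}.

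The nonequivariant restriction follows by applying Lemma~\ref{lem:nonsingular} twice to peel off each level of singular structure, leaving the smooth representative $\Xp{p+q}$, which represents $1$. For the fixed-point restriction, the crucial observation is the set-theoretic identity
\[
    (\Xpq{p-j}q \cup \Xpq p{(q-i)})^\GG = (\Xp{p-j}\cup\Xp p) \disjunion (\Xq q\cup \Xq{q-i}) = \Xp p \disjunion \Xq q = (\Xpq pq)^\GG.
\]
Thus, on each of the two fixed components the entirety of the manifold is already swallowed by the first-level singular part, so the nonsingular part of the restricted singular manifold is empty. Lemma~\ref{lem:empty} then forces both fixed-point contributions to be $0$, matching $(\cxwt^i\cwt^j)^\GG = (0,0)$.

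The main obstacle will be the careful bookkeeping for the $RO(\Pi B)$-graded orientation on the free complement, together with ensuring the two-stage singular structure fits Definition~\ref{def:singularmfld}: one must package tubular neighborhoods of $\Xpq{p-j}q$ and $\Xpq p{(q-i)}$, together with the mutual intersection $\Xpq{p-j}{(q-i)}$, as a mapping-cylinder $M\phi$ over $\bndry_1 N$ in a manner consistent with the recursive singular structure. Once this is in place, the verification of $\rho$ and $(-)^\GG$ above closes the argument.
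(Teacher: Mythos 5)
Your proposal follows exactly the route the paper intends: the paper itself says the proof is "in much the same way as Proposition~\ref{prop:powersOfZeta}," and your argument carries that out correctly, checking the grading/codimension-$2$ conditions, using Lemma~\ref{lem:nonsingular} to compute $\rho(\cxwt^i\cwt^j) = 1$ (equivalently $\iota^{2i}\zeta^{j-i}$ up to the regrading units), and using the set-theoretic identity for the fixed locus together with Lemma~\ref{lem:empty} to get $(\cxwt^i\cwt^j)^\GG = (0,0)$. The observation that $\Xp p\subseteq\Xpq p{(q-i)}$ and $\Xq q\subseteq\Xpq{p-j}q$ (so the complement is $\GG$-free, allowing the dimension reinterpretation) is the key point and you state it correctly.
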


This implies the following.

\begin{corollary}\label{cor:cs times zetas}
If $1\leq i\leq q'$ and $1\leq j \leq p'$, then
\[
    \cxwt^i\cwt^j\cwd^{\;p-p'}\cxwd^{\;q-q'} =
    [\Xpq {p'}{q'}; \Xpq{p'-j}{q'} \union \Xpq {p'}{(q'-i)}; \Xpq{p'-j}{(q'-i)}]^*
\]
\qed
\end{corollary}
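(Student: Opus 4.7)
The plan is to derive this corollary directly from the preceding Proposition by combining it with the identity $x\cwd^{\;p-p'}\cxwd^{\;q-q'} = i_!i^*(x)$ noted at the start of \S\ref{sub:cohomxpq}, where $i\colon \Xpq{p'}{q'}\to \Xpq{p}{q}$ is the inclusion.

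First I would apply the Proposition with $p,q$ replaced by $p',q'$ to represent $\cxwt^i\cwt^j \in H_\GG^{RO(\Pi B)}(\Xpq{p'}{q'}_+)$ by the singular manifold
\[
    [\Xpq{p'}{q'}; \Xpq{p'-j}{q'}\union \Xpq{p'}{(q'-i)}; \Xpq{p'-j}{(q'-i)}]^*,
\]
using the identity map to $\Xpq{p'}{q'}$. The hypotheses $1\leq i\leq q'$ and $1\leq j\leq p'$ are exactly what the Proposition requires in this range. Next, because the classes $\cxwt$ and $\cwt$ are pulled back from $B = \Xpq{\infty}{\infty}$, we have $i^*(\cxwt^i\cwt^j) = \cxwt^i\cwt^j$ on $\Xpq{p'}{q'}$, so the displayed representation is also the representation of $i^*(\cxwt^i\cwt^j)$.

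Then I would push forward along $i$. Using the identity $x\cwd^{\;p-p'}\cxwd^{\;q-q'} = i_!i^*(x)$ with $x = \cxwt^i\cwt^j$, together with the fact from the remarks after Theorem~\ref{thm:singularbordism} that the pushforward of a cohomology class represented by $f\colon M\to\Xpq{p'}{q'}$ is represented by the composition $i\circ f\colon M\to\Xpq{p}{q}$, we obtain the claimed formula. The representing singular manifold is unchanged as a space-with-singular-structure; only the reference map to the ambient projective space is replaced by its composition with $i$.

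The only real point to check is that the singular structure of Definition~\ref{def:singularmfld} on $\Xpq{p'}{q'}$ used in the Proposition continues to be a valid singular structure after composing with $i$, and that the orientation data used to verify the fixed-point and nonequivariant restrictions in the proof of the Proposition transport unchanged under $i$. Both are immediate since $i$ is a closed smooth $\GG$-embedding, so no obstacle arises; the corollary is essentially a functorial restatement of the Proposition together with the pushforward description of multiplication by $\cwd^{\;p-p'}\cxwd^{\;q-q'}$.
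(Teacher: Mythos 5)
Your proposal is correct and follows essentially the same route the paper intends: the paper derives this corollary (and the parallel Corollary~\ref{cor:powersOfZeta}) from the corresponding proposition via the identity $x\cwd^{\;p-p'}\cxwd^{\;q-q'} = i_!i^*(x)$ together with the observation that pushforward along $i$ is implemented on representatives by composition with $i$. Your additional remark that $i^*(\cxwt^i\cwt^j) = \cxwt^i\cwt^j$ because these classes are pulled back from $B$ is exactly the point left implicit in the paper.
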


\subsubsection{The elements $\cxwt^{-k}\cwd^{\;p}\cxwd^{\;q-q'}$ and $\cwt^{-k}\cwd^{\;p-p'}\cxwd^{\;q}$}
\label{subsub:divided}
In \cite{CHTFiniteProjSpace}, we defined the element $\cxwt^{-k}\cwd^{\;p}\cxwd^{\;q-q'} \in H_\GG^{RO(\Pi B)}(\Xpq pq_+)$
to be the pushforward of $\cxwt^{-k}$ from the group
$H_\GG^{-k(\chiw-2)}(\Xq{q'}_+)$, where $\cxwt$ is invertible.
In fact, because $\chiw = 2$ on $\Xq {q'}$, $\cxwt^{-k} = [\Xq{q'}]^*_{-k(\chiw-2)}$,
where the subscript indicates that we consider $\Xq{q'}$ to have codimension $-k(\chiw-2)$
in itself.
Taking the pushforward, it follows that
\[
 \cxwt^{-k}\cwd^{\;p}\cxwd^{\;q-q'} = [\Xq{q'}]^*_{p\omega+(q-q')\chiw-k(\chiw-2)}.
\]
Again, we can interpet the codimension of $\Xq{q'}$ in $\Xpq pq$ to be as indicated 
using the fact that $\chiw = 2$ on $\Xq{q'}$.
Similarly,
\[
 \cwt^{-k}\cwd^{\;p-p'}\cxwd^{\;q} = [\Xp{p'}]^*_{(p-p')\omega + q\chiw - k(\omega-2)}
\]
using the fact that $\omega = 2$ on $\Xp{p'}$.

Note that these expressions work for any $k\in\ZZ$, giving simpler representatives for
the special cases $\cxwt^{k}\cwd^{\;p}\cxwd^{\;q-q'}$ and $\cwt^{k}\cwd^{\;p-p'}\cxwd^{\;q}$
of the elements considered in Corollary~\ref{cor:powersOfZeta}.
For example, if $0 < k \leq q'$, then we have the two representations
\[
    \cxwt^{k}\cwd^{\;p}\cxwd^{\;q-q'} = [\Xq{q'}; \Xq{(q'-k)}]^* = [\Xq{q'}]^*.
\]
The equality of the two representations also follows from Lemma~\ref{lem:nonsingular}.

\section{Schubert varieties}\label{sec:Schubert}

In this section we present some notations commonly used for Schubert varieties of Grassmannians and explain how these can be generalised to the equivariant world.
We will use them only in the special case of projective spaces, but we suspect they
will be useful more generally.

Let $\Gr(D,N)$ be the Grassmannian of $D$-dimensional $\C$-linear spaces in $\C^N$. Inside of each Grassmannian there exists an important family of subvarieties known as Schubert varieties, which are usually indexed by partitions. In general a  partition 
$\underline{\lambda} = (\lambda_1,\lambda_2,\ldots,\lambda_l)$ of length $l$ is an $l$-tuple  of positive integers 
\[
    \lambda_1\geq \lambda_2\geq \cdots \geq \lambda_l> 0\,,
\]
the \emph{parts} of the partition.
To specify a Schubert variety of $\Gr(D,N)$, a partition must satisfy two extra conditions, on its parts and its length: 
\begin{align*}
\lambda_1\leq N-D\, \quad\text{and }\quad
l\leq D\,.
\end{align*}
Note that we allow the empty partition, of length 0.

%Depending on whether one wants to focus on the codimension or on the dimension of the Schubert varieties two different notations are usually employed.
Besides a partition, we must also choose a reference flag 
\[
    V_\bullet=\Big(0=V_0\subset V_1 \subset V_2 \subset \cdots\subset V_{N-1} \subset V_N=\C^N\Big)
\]
with $\dim_\C V_i=i$.
We then define the  Schubert variety
\[
    Y_{\underline{\lambda}}(V_\bullet):=\Bigl\{W\in \Gr(D,N)\ \Bigm|\ \dim_\C (W\cap V_{N-D+i-\lambda_i})\geq i, \ \forall\,i \Bigr\}\,.
\]
The sum of the parts of a partition is the codimension of the associated Schubert variety, in other words, one has 
\[
    |\underline{\lambda}|:=\sum_{i=1}^l \lambda_i =\text{codim}\Big(Y_{\underline{\lambda}}(V_\bullet)\,,\Gr(D,N)\Big)\,.
\]
Nonequivariantly, although Schubert varieties do depend on the choice of $V_\bullet$, their fundamental classes are independent of it and, as a consequence, the flag is usually omitted from the notation. 
Equivariantly, different flags can give significantly different Schubert varieties,
so we will suppress $V_\bullet$ from the notation only when it is well understood.

In the special case in which $D=1$ this picture becomes much simpler, with the Grassmannian being $\PP^{N-1}$ and non-empty partitions $\underline{\lambda}$ reducing to single integers $\lambda_1$ such that 
\[
    0< \lambda_1\leq N-1.
\]
In this case, a point $W$ of the Schubert variety is a complex line and the defining condition  implies that  $Y_{\underline{\lambda}}$ is just the smaller projective space 
\[
    \PP^{N-1-\lambda_1}:=\PP(V_{N-\lambda_1}).
\]
(When $\underline\lambda$ is empty, $Y_{\underline\lambda} = \PP^{N-1}$ is the whole projective space.)

So, nonequivariantly, the Schubert varieties in a projective space $\PP^m$ are very simple: they are
sub-projective spaces $\PP^k$ for $k\leq m$. Equivariantly, there are three classes of objects
mapping to $\Xpq pq$
we can consider as Schubert varieties, all of which will appear in our version of B\'ezout's theorem.

% \subsection{Free Schubert varieties}
% The first Schubert varieties we consider are those induced from nonequivariant varieties.
% If $m\leq p+q$ and $\PP^{m-1}\to \Xpq pq$ is a nonequivariant inclusion, then we have the induced equivariant map
% \[
%     \GG\times \PP^{m-1}\to \Xpq pq.
% \]
% The cohomology element $[\GG\times\PP^{m-1}]^*$ does not depend on which inclusion map we start with, as all such
% are homotopic, so we will write
% \[
%     \Freep m = \GG\times\PP^{m-1}
% \]
% generically and consider the element $[\Freep m]^*$ in the cohomology of $\Xpq pq$.

% Because $[\PP^{m-1}]^*$ can represent $\iota^k\zeta^\ell \cd[p+q-m]$ for any $k$ and $\ell$,
% we can write
% \[
%     [\Freep m]^* = \tau(\iota^k\zeta^\ell\cd[p+q-m]),
% \]
% interpreting the equivariant dimension of $\Freep m$ as appropriate.
% Thus, free Schubert varieties represent the images under the transfer of the additive generators of 
% the nonequivariant cohomology of projective space.

\subsection{Free Schubert varieties}
The first type of Schubert variety we consider is one induced from a nonequivariant variety.
If $\lambda_1\leq p+q-1$ and 
$$\Xp {p+q-\lambda_1}=Y_{\underline{\lambda}}\to \Xpq pq$$
is a standard nonequivariant  inclusion, then we have the induced equivariant map
\[
    \GG\times Y_{\underline{\lambda}}\to \Xpq pq.
\]
The cohomology element $[\GG\times Y_{\underline{\lambda}}]^*$ does not depend on which inclusion map we start with, as all such
are homotopic, so we will write
\[
    \Freel {\underline{\lambda}}   = \GG\times Y_{\underline{\lambda}}%
\]
generically and consider the element $[\Freel {\underline{\lambda}}]^*$ in the cohomology of $\Xpq pq$.

Because $[\Xp {p+q-\lambda_1}]^*$ can represent the nonequivariant class
$\iota^k\zeta^j \cd[\lambda_1]$ for any $k$ and $j$,
we can write
\[
    [\Freel {\underline{\lambda}}]^* = \tau(\iota^k\zeta^j\cd[\lambda_1]),
\]
interpreting the equivariant dimension of $\Freel {\underline{\lambda}}$ as appropriate.
Thus, the fundamental classes of free Schubert varieties represent the images under the transfer of the additive generators of 
the nonequivariant cohomology of projective space.

In some of the formulas it will be more convenient to refer to these Schubert varieties in terms of dimension as opposed to codimension. 
To do so, assuming that an ambient space $\Xpq pq$ has been fixed, we will use the notation
\[
    \Freep {p+q-\lambda_1}=\Freel {\underline{\lambda}}\,,
\]
where $p+q-\lambda_1$ is the affine dimension.
We will also use the simplified notation
\[
   \Freel{\lambda_1}  =\Freel{(\lambda_1)} .
\]

% \subsection{Invariant Schubert varieties}
% Nonequivariantly, the Schubert varieties of a projective space can be described
% as the projective spaces of subspaces $V\subseteq \C^p$. Equivariantly, if $V\subseteq \Cpq pq$
% is $\GG$-invariant, then it has the form $V = \Cpq {p'}{q'}$ for some $p'\leq p$ and $q'\leq q$.
% We can then consider the projective spaces
% \[
%     \Xpq{p'}{q'} \subseteq \Xpq pq
% \]
% as equivariant Schubert varieties. We call these the \emph{invariant} Schubert varieties.
% We saw in \S\ref{sub:cohomxpq} that
% \[
%     [\Xpq{p'}{q'}]^* = \cwd^{\;p-p'}\cxwd^{\;q-q'}
% \]
% and, in Corollaries~\ref{cor:powersOfZeta} and~\ref{cor:cs times zetas},
% that we can represent this element multiplied by (small) powers of $\cxwt$ and $\cwt$
% by adding suitable singular parts consisting of smaller invariant Schubert varieties.

\subsection{Invariant Schubert varieties}
Next, consider the case of an equivariant flag $V_\bullet$ in $\Cpq pq$, that is, one for which, for every $i\in\{1,\ldots,p+q\}$, we have 
\[
    tV_i=V_i.
\]
Note that we do not require $V_i$ to be fixed pointwise, just that it is mapped to itself by the action. Thus, each $V_i$ is an equivariant complex affine space, so there exist $0\leq p_i\leq p $ and $0\leq q_i\leq  q$, satisfying $p_i+q_i=i$, for which 
\[
    V_i=\Cpq{p_i}{q_i}.
\]
As in the nonequivariant case, we define the Schubert variety associated to the non-empty partition $\underline{\lambda}=(\lambda_1)$ to be
\[
    Y_{\underline{\lambda}}(V_\bullet):=\Bigl\{W\in \Xpq{p}{q}\ \Bigm|\ \dim_\C (W\cap V_{p+q-\lambda_1})\geq 1\Bigr\}=
\PP(V_{p+q-\lambda_1})
\,.
\]
We will refer to these as the \emph{invariant Schubert varieties} and we will write 
\begin{align*}
    \lambda_1^+ = p - p_{p+q-\lambda_1} \\
\intertext{and}
    \lambda_1^- = q - q_{p+q-\lambda_1}
\end{align*}
to denote, respectively, the codimensions of the components of the fixed sets of $Y_{\underline{\lambda}}(V_\bullet)$ in $\Xp{p}$ and $\Xq{q}$.
Note that $\lambda_1=\lambda_1^+ + \lambda_1^-.$ 
 
For the sake of readability we will usually drop the reference to the flag $V_\bullet$, but the reader should be aware that the dependence on the flag is crucial, even at the level of fundamental classes.
 %Notice that the quantities $p-p_i$ and $q-q_i$ coincide with the codimensions of the fixed sets of $\PP(V_i)$ into the fixed sets of $\Xpq{p}{q}$. 
%As a consequence, assuming the choice of a fixed flag $V_\bullet$, to every partition $\underline{\lambda}=(\lambda_1)$ we will associate 
%We have already noticed that $\lambda$ represents the codimension 
%As we already mentioned nonequivariantly, the Schubert varieties of a projective space can be described
%as the projective spaces of subspaces $V\subseteq \C^p$. Equivariantly, if $V\subseteq \Cpq pq$
%is $\GG$-invariant, then it has the form $V = \Cpq {p'}{q'}$ for some $p'\leq p$ and $q'\leq q$. We can interpret this data as an equivariant partition 
%$$\underline{\lambda}^{C_2}=(\underline{\lambda},\underline{\lambda}^+,\underline{\lambda}^-)=\Big((\lambda_1),(\lambda_1^+),(\lambda_1^-)\Big)=\Big((p+q-p'-q'),(p-p'),(q-q')\Big)$$
%We can then consider the projective spaces
%\[
%    \Xpq{p'}{q'}=Y_{\underline{\lambda}^{C_2}} \subseteq \Xpq pq
%\]
%as equivariant Schubert varieties. We call these the \emph{invariant} Schubert varieties.
In fact, we saw in \S\ref{sub:cohomxpq} that
\begin{align}\label{eqn FUND}
    [\Xpq{p'}{q'}]^* = \cwd^{\;p-p'}\cxwd^{\;q-q'}
\end{align}
and, in Corollaries~\ref{cor:powersOfZeta} and~\ref{cor:cs times zetas},
 we pointed out that we can represent these elements multiplied by (small) powers of $\cxwt$ and $\cwt$
by adding suitable singular parts consisting of smaller invariant Schubert varieties. Equality (\ref{eqn FUND}) can also be rewritten as
\[ [Y_{\underline{\lambda}}(V_\bullet)]^* = \cwd^{\;\lambda_1^+}\cxwd^{\;\lambda_1^-}
\]
and similarly for the products with (small) powers of $\cxwt$ and $\cwt$. 
Because this class depends only on $\lambda_1$, $\lambda_1^+$, and $\lambda_1^-$, 
and assuming that the ambient space $\Xpq{p}{q}$ is understood, we will write $\ISc{\lambda_1}{\lambda_1^+}{\lambda_1^-}$ instead of the more precise $Y_{\underline{\lambda}}(V_\bullet)$.
With this notation,
\[
    \ISc{\lambda_1}{\lambda_1^+}{\lambda_1^-} = \Xpq{p-\lambda_1^+}{(q-\lambda_1^-)}
\]
and
\[
    [\ISc{\lambda_1}{\lambda_1^+}{\lambda_1^-}]^* = \cwd^{\;\lambda_1^+}\cxwd^{\;\lambda_1^-}.
\]

\subsection{Binate Schubert varieties}\label{sub:binateSchubert}
Finally, we consider the case in which the flag $V_\bullet$ is not necessarily invariant under the action of $C_2$. For each level of the flag we can consider the intersection of $V_i$ with its translate
\[
    I(V_i):=V_i \intersect tV_i= \Cpq{p_i}{q_i}\,,
\]
a $\GG$-subspace of $\Cpq pq$.
% Before we move on and define the Schubert varieties associated to this type of flag, let us look more closely at the relationship between $i$, $p_i$ and $q_i$.
We no longer need have that $i = p_i + q_i$, but
we have the following relationship between these numbers.

\begin{proposition}
Let $i$, $p_i$, and $q_i$ be nonnegative integers.
There exists a nonequivariant subspace $V_i\subseteq \Cpq pq$ of dimension $i$
such that $I(V_i)=\Cpq{p_i}{q_i}$ if and only if
\begin{equation}
\begin{aligned}\label{eqn:validms}
    p_i + q_i &\leq i \\
    i-q_i &\leq p \\
    \mathllap{\text{and}}\quad i-p_i &\leq q.
\end{aligned}
\end{equation}
\end{proposition}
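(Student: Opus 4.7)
The plan is to prove the two implications separately: necessity by extracting dimensional constraints from the intersections of $V_i$ with the trivial and sign summands, and sufficiency by an explicit construction.

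For necessity, the key observation is that $V_i \cap \C^p$ lies inside $V_i \cap tV_i = I(V_i) = \Cpq{p_i}{q_i}$, since every vector of $\C^p$ is fixed by $t$. Intersecting once more with $\C^p$ then gives the exact identification $V_i \cap \C^p = \C^{p_i}$, and the symmetric argument yields $V_i \cap \Cq q = \Cq{q_i}$. Now consider the projection of $V_i$ onto $\C^p$ along the decomposition $\Cpq pq = \C^p \oplus \Cq q$. Its kernel is $V_i \cap \Cq q = \Cq{q_i}$, so its image has complex dimension $i - q_i$; since this image sits in $\C^p$, we obtain $i - q_i \leq p$. The projection onto $\Cq q$ gives $i - p_i \leq q$ by the same reasoning, and $p_i + q_i = \dim I(V_i) \leq \dim V_i = i$ is automatic.

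For sufficiency, given $(i, p_i, q_i)$ satisfying the three inequalities, set $k := i - p_i - q_i$, so that $0 \leq k$, $k \leq p - p_i$, and $k \leq q - q_i$. Choose linearly independent vectors $u_1, \ldots, u_k$ in a complement of $\C^{p_i}$ inside $\C^p$ and linearly independent $w_1, \ldots, w_k$ in a complement of $\Cq{q_i}$ inside $\Cq q$, and define
\[
    V_i := \Cpq{p_i}{q_i} \oplus \mathrm{span}_{\C}(u_1 + w_1, \ldots, u_k + w_k).
\]
This is a subspace of $\Cpq pq$ of complex dimension $i$ that visibly contains $\Cpq{p_i}{q_i}$.

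The step I expect to require the most care is the verification that $V_i \cap tV_i$ equals $\Cpq{p_i}{q_i}$ exactly, rather than being any larger. Suppose an element has simultaneous expressions $a + \sum_j \alpha_j (u_j + w_j)$ in $V_i$ and $b + \sum_j \beta_j (u_j - w_j)$ in $tV_i$ with $a, b \in \Cpq{p_i}{q_i}$. Separating the difference into its $\C^p$- and $\Cq q$-components and using that the $u_j$ are linearly independent modulo $\C^{p_i}$ and the $w_j$ linearly independent modulo $\Cq{q_i}$ will force $\alpha_j = \beta_j$ and simultaneously $\alpha_j = -\beta_j$, hence all coefficients vanish. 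This is the only non-routine check; once it is in place, the proposition follows.
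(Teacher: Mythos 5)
Your proof is correct, and the necessity direction takes a genuinely different route from the paper's.

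For necessity, the paper decomposes $V_i = I(V_i)\oplus W$, observes $W\cap tW = 0$, and pins down $W + tW = \Cpq kk$ with $k = \dim W$ by a dimension-counting contradiction (if $W+tW = \Cpq k{k'}$ with $k > k'$, then $W\cap\C^k\neq 0$ forces a nonzero fixed vector inside $W\cap tW$). From there, $V_i + tV_i = \Cpq{p_i}{q_i}\oplus\Cpq kk = \Cpq{i-q_i}{(i-p_i)} \subseteq \Cpq pq$ yields the inequalities. Your argument avoids this decomposition entirely: you observe directly that $V_i\cap\C^p \subseteq V_i\cap tV_i$, so $V_i\cap\C^p = \C^{p_i}$ exactly, and then rank–nullity applied to the projection $V_i\to\C^p$ (whose kernel is $V_i\cap\Cq q = \Cq{q_i}$) gives $i - q_i\leq p$ immediately. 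This is cleaner and more elementary; what the paper's approach buys in exchange is the explicit identification of the $\GG$-subspace $V_i + tV_i = \Cpq{i-q_i}{(i-p_i)}$, which is of independent interest in the surrounding discussion of binate Schubert varieties. For sufficiency, your construction coincides with the paper's, but you supply the explicit check that $V_i\cap tV_i$ is no larger than $\Cpq{p_i}{q_i}$, which the paper states the construction ``is possible because of (\ref{eqn:validms})'' without writing out; your coefficient argument ($\alpha_j = \beta_j$ from the $\C^p$-projection, $\alpha_j = -\beta_j$ from the $\Cq q$-projection) does fill that gap correctly.
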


\begin{proof}
Note that (\ref{eqn:validms}) implies that $i\leq p+q$, $p_i\leq p$, and $q_i\leq q$,
so we do not need to include these as separate assumptions.

First, assume that there exists such a subspace $V_i$.
By definition, we have $\Cpq{p_i}{q_i} \subseteq V_i$, hence we must have $p_i + q_i \leq i$.
Write
\[
    V_i = I(V_i)\dirsum W,
\]
so that $W\intersect tW = 0$. We claim that $W+tW = \Cpq kk$ where $k = \dim W$.
To see this,
note that $W+tW = \Cpq k{k'}$ for some $k$ and $k'$. Suppose that $k > k'$, so $k > \dim W$.
Then $W\intersect \C^k \neq 0$, because $\dim (W+\C^k) \leq k+k' = 2\dim W < \dim W + k$.
But then we would have
\[
    W\intersect tW \supseteq W\intersect \C^k \neq 0,
\]
a contradiction. Similarly, we cannot have $k < k'$.
Now, $\dim W = i - (p_i + q_i)$, so we get
\begin{align*}
    V_i + tV_i &= \Cpq{p_i}{q_i} \dirsum \Cpq{i-(p_i+q_i)}{(i-(p_i+q_i))} \\
    &= \Cpq{i-q_i}{(i-p_i)}.
\end{align*}
Because $V_i + tV_i \subseteq \Cpq pq$,
it follows that we must have $i-q_i \leq p$ and $i-p_i \leq q$.

Conversely, suppose that $i$, $p_i$, and $q_i$ satisfy (\ref{eqn:validms}).
Let
\begin{align*}
    u_j &= (0,\ldots, 0,1,0\ldots,0; 0, \ldots, 0) && 1\leq j \leq p \\
    v_j &= (0,\ldots, 0; 0, \ldots, 0,1,0,\ldots,0) && 1\leq j \leq q
\end{align*}
be the standard basis elements of $\Cpq pq$.
Then we can take $V_i$ to be the subspace of $\Cpq pq$ with basis
\[
    \{ u_j \mid 1\leq j \leq p_i\} \union \{ v_j \mid 1\leq j \leq q_i\} 
    \union \{u_{p_i+j}+v_{q_i+j} \mid 1\leq j \leq i-(p_i+q_i)\},
\]
which is possible because of (\ref{eqn:validms}).
\end{proof}

We define the \textit{binate Schubert variety} specified by $V_\bullet$ and $\underline\lambda$
to be the union of the following two non equivariant Schubert varieties:
\[
    \BScl{\lambda}(V_\bullet):=\IScl{\lambda}(V_\bullet)\cup t\IScl{\lambda}(V_\bullet).
\]
In general such a $C_2$-space will consist of two projective spaces, $\PP(V_{p+q-\lambda_1})$ and $t\PP(V_{p+q-\lambda_1})$, which are swapped by the action and whose common intersection is $\PP(I(V_{p+q-\lambda_1}))$. As in the previous section we can replace the reference to the flag by keeping track of $\lambda_1^+$ and $\lambda_1^-$, defined the same way and again the codimensions of the fixed sets of $\PP(I(V_{p+q-\lambda_1}))$ in the components of the fixed set of $\Xpq{p}{q}$, and we will write $\BSc{\lambda_1}{\lambda_1^+}{\lambda_1^-}$ instead of $\BScl{\lambda}(V_\bullet)$. 
Now the subscript $\lambda_1$ is not superfluous, as
it is possible to have $\lambda_1^++\lambda_1^- > \lambda_1$.

We would like to able to interpret  binate Schubert varieties as singular manifolds. However, in the dimension dictated
by the singularity having integer codimension, the resulting singular manifold turns out to represent the same
cohomology class as a free Schubert variety.
We can get something more interesting as follows,
which will lead to Corollaries~\ref{cor:Q1} and~\ref{cor:Qk}.

%\begin{definition}
%Let
%\[
%    \Fpqk{m_0}{m_1}m \subseteq \Cpq pq
%\]
%be an $m$-dimensional linear subspace with $\Fpqk{m_0}{m_1}m \intersect g\Fpqk{m_0}{m_1}m = \Cpq{m_0}{m_1}$.
%\end{definition}

For convenience, and assuming that we have fixed an ambient projective space $\Xpq{p}{q}$, we will now switch to an equivalent notation which highlights the affine dimension of the variety and of its fixed sets: 
\[
    \Spqk{p_i}{q_i}i :=\BSc{p+q-i}{p-p_i}{q-q_i}\,. 
\]
For the purposes of Corollaries~\ref{cor:Q1} and~\ref{cor:Qk}, we would like this to be a singular manifold of (real) equivariant dimension
\begin{equation}\label{eqn:SchubertDim}
    p_i\omega + q_i\chi\omega - 2 + 2(i - p_i - q_i)\sigma
\end{equation}
(which implies a nonequivariant dimension of $2(i-1)$).
The singular part should be the intersection
\[
    \PP(V_i) \intersect t\PP(V_i) =\PP(I(V_i))= \Xpq{p_i}{q_i}.
\]
The action of $\GG$ is free on the complement of the intersection, so we may consider the complement
to have dimension (\ref{eqn:SchubertDim}).
However,  the intersection has dimension
\[
    p_i\omega + q_i\chi\omega - 2,
\]
and the difference of this from (\ref{eqn:SchubertDim}) is $2(i-p_i-q_i)\sigma$
% \begin{align*}
%     [(i-q_i)\omega + (i-q_i)\chi\omega &{} - 2(i-p_i-q_i+1)]  - [p_i\omega + q_i\chi\omega - 2] \\
%     &= (i-p_i-q_i)(\omega + \chi\omega - 2) \\
%     &= 2(i-p_i-q_i)\sigma
% \end{align*}
which, when $p_i + q_i \neq i$, is not an integer.
Hence, $(\Spqk{p_i}{q_i}i; \Xpq{p_i}{q_i})$ does
not form a singular manifold with codimension-2 singularities in our desired dimension.
We can fix this by taking a ``desingularization.''
With those corollaries in mind, we are lead to the following definition.

\begin{definition}\label{def:binateSchubert}
Assume that $i$, $p_i$, and $q_i$ are nonnegative and satisfy (\ref{eqn:validms}). Define
\[
    \tBSc{p+q-i}{p-p_i}{q-q_i}=\tSpqk{p_i}{q_i}i = 
    \begin{cases}
        \GG\times \PP^{i-1} \disjunion \Bigl(S(\R^{1+2(i-p_i-q_i)\sigma})\times \Xpq{p_i}{q_i}\Bigr) \\
            \mspace{300mu} \text{if $i > p_i + q_i$} \\
        \Xpq{p_i}{q_i} \disjunion \Xpq{p_i}{q_i} \\
            \mspace{300mu} \text{if $i = p_i + q_i$.}
    \end{cases}
\]
There is a map $\tSpqk{p_i}{q_i}i \to \Xpq pq$ defined as follows.
When $i > p_i+q_i$, we map one copy of $\PP^{i-1}$ onto $\PP(V_i)$, so that the other
maps to $t\PP(V_i)$.
On $S(\R^{1+2(i-p_i-q_i)\sigma})\times \Xpq{p_i}{q_i}$, we take the map to be projection to the second factor followed by inclusion. 
In the case of $i=p_i+q_i$, the map is given by the usual inclusion of $\Xpq{p_i}{q_i}$ on each component.
\end{definition}

Because $S(\R^{1+2(i-p_i-q_i)\sigma})$ has dimension $2(i-p_i-q_i)\sigma$, the space
$\tSpqk{p_i}{q_i}i$ has equivariant dimension (\ref{eqn:SchubertDim}).
By definition, the image of $\tSpqk{p_i}{q_i}i$ in $\Xpq pq$ is the binate Schubert variety
$\Spqk{p_i}{q_i}i$.
%We refer to either $\tSpqk{m_0}{m_1}m$ or $\Spqk{m_0}{m_1}m$ as a \emph{binate Schubert variety}
%in $\Xpq pq$.

Note that, in this definition, we are using the orientation on $S(\R^{1+n\sigma})$
discussed in \S\ref{subsub:e}.
Using that, we could consider the case $i=p_i+q_i$ to be a special case of the first part
of the definition, because, with this orientation, $S(\R)$ represents $\kappa = 2 - g$,
hence the first expression represents the same element as $\Xpq{p_i}{q_i} \disjunion \Xpq{p_i}{q_i}$
in the cohomology of $\Xpq pq$
when $i = p_i+q_i$.

We extend the definition as follows.

\begin{definition}
Assume that $i$ is nonnegative and that $i$, $p_i$, and $q_i$ satisfy (\ref{eqn:validms}),
with $p_i$ and $q_i$ possibly negative. Let
\[
    \tSpqk{p_i}{q_i}i = \tSpqk{\max(p_i,0)}{\max(q_i,0)}i,
\]
but with dimension still given by (\ref{eqn:SchubertDim}).
Similarly, we set
\[
    \tBSc {\lambda_1}{\lambda_1^+}{\lambda_1^-}
        = \tBSc {\lambda_1}{\min(\lambda_1^+,p)}{\min(\lambda_1^-,q)}.
\]
\end{definition}

We need to explain how we can continue to use (\ref{eqn:SchubertDim}) as the dimension when
$p_i < 0$ or $q_i < 0$. Suppose, for example, that $p_i < 0$ and $q_i \geq 0$, so we are setting
\begin{align*}
    \tSpqk{p_i}{q_i}i &= \tSpqk{0}{q_i}i \\
    &= \GG\times\PP^{i-1} \disjunion
        \Bigl(S(\R^{1+2(i-q_i)\sigma})\times \Xq{q_i}\Bigr).
\end{align*}
Because $\GG\times\PP^{i-1}$ is free, we may interpret its dimension as any equivariant dimension
whose underlying nonequivariant dimension is $2(i-1)$, as is true for (\ref{eqn:SchubertDim}).
The piece $S(\R^{1+2(i-q_i)\sigma})\times \Xq{q_i}$ maps entirely into $\Xq q$,
where we have $\omega = 2\sigma$ and $\chi\omega = 2$. Now $S(\R^{1+2(i-q_i)\sigma})\times \Xq{q_i}$
has natural dimension $2(q_i-1) + 2(i-q_i)\sigma$, which agrees with (\ref{eqn:SchubertDim})
on substituting there $\omega = 2\sigma$ and $\chi\omega=2$,
showing that $\tSpqk{0}{q_i}i$ has the desired dimension.
A similar argument works for the case where $p_i \geq 0$ and $q_i < 0$.
If both $p_i$ and $q_i$ are negative, then we are looking at $\tSpqk 00i$, which is free,
so again we may interpret it to have the dimension given by (\ref{eqn:SchubertDim}).

Finally, we identify the element $[\tSpqk{p_i}{q_i}i]^*$.

\begin{proposition}\label{prop:schubertRepresents}
In the cohomology of $\Xpq pq$ we have
% \[
%     [\tSpqk{m_0}{m_1}m]^* =
%     \cwd^{\;p-(m-m_1)}\cxwd^{\;q-(m-m_0)}\Bigl(\tau(\cd[\ell]) + e^{-2\ell}\kappa \cwd^{\;\ell}\cxwd^{\;\ell}\Bigr)      
% \]
% where $\ell = m - m_0 - m_1$.
\begin{align*}
    [\tSpqk{p_i}{q_i}i]^* &= \tau(\iota^{2(q-(i-p_i))}\zeta^{p-p_i-(q-q_i)}\cd[p+q-i]) \\
    &\qquad {}+ \begin{cases}
        e^{-2(i-p_i-q_i)}\kappa\,\cwd^{\;p-p_i}\cxwd^{\;q-q_i}
            & p_i\geq 0 \text{ and } q_i\geq 0 \\
        e^{-2(i-q_i)}\kappa\,\cxwt^{p_i}\,\cwd^{\;p}\cxwd^{\;q-q_i}
            & p_i < 0 \text{ and } q_i\geq 0 \\
        e^{-2(i-p_i)}\kappa\,\cwt^{q_i}\,\cwd^{\;p-p_i}\cxwd^{\;q}
            & p_i \geq 0 \text{ and } q_i < 0 \\
        0 & p_i < 0 \text{ and } q_i < 0.
      \end{cases} \\
    &= \cwd^{\;p-(i-q_i)}\cxwd^{\,q-(i-p_i)}
        \bigl(\tau(\cd[k]) + e^{-2k}\kappa\cwd^{\;k}\cxwd^{\;k}\bigr)
\end{align*}
where $k = i - p_i - q_i$. In particular, if $k=0$, one has
\[
[\tSpqk{p_i}{q_i}i]^*=2[\Xpq{p_i}{q_i}]^*.
\]

In the alternate notation, we have
\begin{align*}
    [\tBSc{\lambda_1}{\lambda_1^+}{\lambda_1^-}]^* &= \tau\big(\iota^{2(\lambda_1+\lambda_1^+)}\zeta^{\lambda_1^+-\lambda_1^-}\cd[\lambda]\big) \\
    &\qquad {}+ \begin{cases}
        e^{-2(\lambda_1^++\lambda_1^--\lambda_1)}\kappa\,\cwd^{\;\lambda_1^+}\cxwd^{\;\lambda_1^-}
            & p\geq \lambda_1^+ \text{ and } q\geq \lambda_1^- \\
        e^{-2(p+\lambda_1^--\lambda_1)}\kappa\,\cxwt^{p-\lambda_1^+}\,\cwd^{\;p}\,\cxwd^{\;\lambda_1^-}
            & p < \lambda_1^+ \text{ and } q\geq \lambda_1^- \\
        e^{-2(q+\lambda_1^+-\lambda_1)}\kappa\,\cwt^{q-\lambda_1^-}\,\cwd^{\;\lambda_1^+}\cxwd^{\;q}
            & p \geq \lambda_1^+ \text{ and } q < \lambda_1^- \\
        0 & p < \lambda_1^+ \text{ and } q < \lambda_1^-.
      \end{cases} \\
    &= \cwd^{\;\lambda_1-\lambda_1^-}\cxwd^{\,\lambda_1-\lambda_1^+}
        \bigl(\tau(\cd[k]) + e^{-2k}\kappa\cwd^{\;k}\cxwd^{\;k}\bigr)
\end{align*}
where $k = \lambda_1^+ +\lambda_1^- - \lambda_1$.
In the special case $k=0$ one has 
\[
[\tBSc{\lambda_1}{\lambda_1^+}{\lambda_1^-}]^*=2[\ISc{\lambda_1}{\lambda_1^+}{\lambda_1^-}]^*.
\]

\end{proposition}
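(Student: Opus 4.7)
The plan is to compute $[\tSpqk{p_i}{q_i}i]^*$ directly from the disjoint-union decomposition given in Definition~\ref{def:binateSchubert}. Since the class represented by a disjoint union of maps is the sum of the classes represented by each piece, it suffices to identify each summand. I will first handle the main case $p_i, q_i \geq 0$ with $k = i - p_i - q_i > 0$, then reduce the $k = 0$ case and the sign-changing edge cases to this, and finally observe that the alternate notation statement is obtained by the substitution $\lambda_1 = p+q-i$, $\lambda_1^+ = p-p_i$, $\lambda_1^- = q-q_i$.

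For the free summand $\GG\times \PP^{i-1}$, the map to $\Xpq pq$ factors through the inclusion of a nonequivariant $\PP^{i-1}$, so this represents a class in the image of $\tau$. Nonequivariantly the class is $c^{p+q-i}$, and the correct equivariant grading is forced by the dimension of $\tSpqk{p_i}{q_i}i$ given in~(\ref{eqn:SchubertDim}), which determines the powers of $\iota$ and $\zeta$ that must be inserted to give a class of the right codimension, yielding $\tau(\iota^{2(q-(i-p_i))}\zeta^{p-p_i-(q-q_i)}\cd[p+q-i])$. For the non-free summand $S(\R^{1+2k\sigma})\times \Xpq{p_i}{q_i}$, the map is the external product of the projection to a point with the inclusion $\Xpq{p_i}{q_i}\hookrightarrow \Xpq pq$. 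Since products of representing maps correspond to cup products of represented classes, this summand represents $[S(\R^{1+2k\sigma})]^*\cdot[\Xpq{p_i}{q_i}]^* = e^{-2k}\kappa\cdot\cwd^{\;p-p_i}\cxwd^{\;q-q_i}$, using \S\ref{subsub:e} for the sphere factor (with its specified orientation) and equation~(\ref{eqn FUND}) for the projective factor. Summing the two contributions gives the first displayed form.

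The equivalence with the condensed form $\cwd^{\;p-(i-q_i)}\cxwd^{\,q-(i-p_i)}(\tau(\cd[k]) + e^{-2k}\kappa\cwd^{\;k}\cxwd^{\;k})$ is a bookkeeping check: the identities $p-(i-q_i)+k = p-p_i$ and $q-(i-p_i)+k = q-q_i$ handle the $\kappa$-term, while the $\tau$-term is dealt with by the Frobenius relation $\cwd^{\,a}\cxwd^{\,b}\tau(\alpha) = \tau(\rho(\cwd^{\,a}\cxwd^{\,b})\alpha)$, with the values of $\rho(\cwd)$ and $\rho(\cxwd)$ recalled from the appendix matching the $\iota$ and $\zeta$ powers on the right. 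The case $k=0$ is immediate since $\tau(1)+\kappa = 2$ (cf.\ the base case of Lemma~\ref{lem Q powers}) and the two copies of $\Xpq{p_i}{q_i}$ each contribute $[\Xpq{p_i}{q_i}]^*$.

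The remaining sign-changing cases follow the same pattern, but the non-free piece now maps into $\Xq q$ (when $p_i<0$) or $\Xp p$ (when $q_i<0$), where $\chi\omega$ respectively $\omega$ behaves like $2$; the desingularized sphere factor of dimension $2(i-q_i)\sigma$ respectively $2(i-p_i)\sigma$ is interpreted using the divided representations of \S\ref{subsub:divided}, supplying the $\cxwt^{p_i}$ respectively $\cwt^{q_i}$ factor with a negative exponent. If both $p_i<0$ and $q_i<0$ then $\tSpqk{p_i}{q_i}i = \GG\times \PP^{i-1}$ and only the $\tau$-summand survives. The main obstacle will be tracking the $\iota$ and $\zeta$ exponents through the Frobenius relation consistently with the grading shifts introduced by the free action on $S(\R^{1+2k\sigma})$; everything else is bookkeeping against the definitions.
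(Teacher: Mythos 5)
Your proposal is correct and follows essentially the same route as the paper: decompose $\tSpqk{p_i}{q_i}i$ via Definition~\ref{def:binateSchubert}, identify the free summand as a transfer class with the $\iota,\zeta$ factors dictated by~(\ref{eqn:SchubertDim}), identify the other summand as $e^{-2k}\kappa\,\cwd^{\;p-p_i}\cxwd^{\;q-q_i}$ using \S\ref{subsub:e} and~(\ref{eqn FUND}), handle the sign cases via the infinite divisibility of $\cwd^{\;p}$ and $\cxwd^{\;q}$, and pass to the condensed form through the Frobenius relation. The paper spells out the divisibility manipulation for negative $p_i,q_i$ as an explicit iterated identity, where you cite \S\ref{subsub:divided}, but the content is the same.
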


\begin{proof}
We show the first equality, case by case.
Suppose that 
%$p\geq \lambda_i^+$ and $q\geq \lambda_i^-$.
$p_i\geq 0$ and $q_i\geq 0$.
If $p_i + q_i = i$, so $k = 0$, the equality follows from
the second part of Definition~\ref{def:binateSchubert} and the equality
\[ 
   [\Xpq{p_i}{q_i} \disjunion \Xpq{p_i}{q_i}]^* = 2[\Xpq{p_i}{q_i}]^* = 2\cwd^{\;p-p_i}\cxwd^{\;q-q_i}.
\]
Note that $\tau(\cd[0]) + e^0\kappa = g + (2-g) = 2$.

So assume that $p_i + q_i < i$. We then have that
\[
    \tSpqk{p_i}{q_i}i =
    \GG\times \PP^{i-1} \disjunion \Bigl(S(\R^{1+2(i-p_i-q_i)\sigma})\times \Xpq{p_i}{q_i}\Bigr).
\]
Interpreting the dimension of the free part $\GG\times\PP^{i-1}$ to be given by (\ref{eqn:SchubertDim}),
we have
\[
    [\GG\times\PP^{i-1}]^* = \tau(\iota^{2(q-(i-p_i))}\zeta^{p-p_i-(q-q_i)}\cd[p+q-i]),
\]
the factors of $\iota$ and $\zeta$ being what is necessary to put this element in the correct grading.
For the other piece, we have
\[
    [S(\R^{1+2(i-p_i-q_i)\sigma})]^* = e^{-2(i-p_i-q_i)}\kappa
\]
by \S\ref{subsub:e} and
\[
    [\Xpq{p_i}{q_i}]^* = \cwd^{\;p-p_i}\cxwd^{\;q-q_i}
\]
by \S\ref{sub:cohomxpq}. Putting this all together verifies this case of the proposition.

Now consider the case where $p_i < 0$ and $q_i \geq 0$. We are then setting
\[
    \tSpqk{p_i}{q_i}i = \tSpqk{0}{q_i}i
    = \GG\times\PP^{i-1} \disjunion \Bigl(S(\R^{1+2(i-q_i)\sigma})\times \Xq{q_i}\Bigr),
\]
but with the dimension taken to be given by (\ref{eqn:SchubertDim}), as discussed above.
The free part represents the same element as above, by the same argument.
The other part is
\[
    [S(\R^{1+2(i-q_i)\sigma})\times \Xq{q_i}]^* = e^{-2(i-q_i)}\kappa\cxwt^{p_i}\cwd^{\;p}\cxwd^{\;q-q_i},
\]
where the factor of $\cxwt^{p_i}$ is is there to put the element in the correct grading.
Notice that we are allowed to insert this negative power of $\cxwt$ because we have the factor of $\cwd^{\;p}$,
which is infinitely divisible by $\cxwt$.

The argument for the case $p_i\geq 0$ and $q_i < 0$ is similar.

When $p_i < 0$ and $q_i < 0$, we have $\tSpqk{p_i}{q_i}i = \tSpqk 00i$, which is free
because $\Xpq 00$ is empty.

For the last equality, we have first that
\[
    \cwd^{\;p-(i-p_i)}\cxwd^{\;q-(i-p_i)}\tau(\cd[i-p_i-q_i])
        = \tau(\iota^{2(q-(i-p_i))}\zeta^{p-p_i-(q-q_i)}\cd[p+q-i]),
\]
using the facts that
\[
    \rho(\cwd) = \zeta\cd \qquad\text{and}\qquad \rho(\cxwd) = \iota^2\zeta^{-1}\cd.
\]
Now consider the term
\[
    \cwd^{\;p-(i-q_i)}\cxwd^{\;q-(i-p_i)} \cdot e^{-2k}\kappa\cwd^{\;k}\cxwd^{\;k}
     = e^{-2(i-p_i-q_i)}\kappa \cwd^{\;p-p_i}\cxwd^{\;q-q_i}.
\]
If $p_i\geq 0$ and $q_i\geq 0$, this matches the formula we already proved.
If $p_i < 0$ and $q_i\geq 0$, we have
\[
    e^{-2(i-p_i-q_i)}\kappa \cwd^{\;p-p_i}\cxwd^{\;q-q_i}
    = e^{-2(i-q_i)}\kappa \cxwt^{p_i}\cwd^{\;p}\cxwd^{\;q-q_i}
\]
by iterating the calculation
\begin{align*}
    e^{-2k}\kappa \cwd^{\;p+1}
    &= e^{-2k}\kappa\cxwt^{-1}\cwd^{\;p}\cdot \cxwt\cwd \\
    &= e^{-2k}\kappa\cxwt^{-1}\cwd^{\;p}((1-\kappa)\cwt\cxwd + e^2) \\
    &= e^{-2k}\kappa \xi\cxwt^{-2}\cwd^{\;p}\cxwd + e^{-2(k-1)}\kappa \cxwt^{-1}\cwd^{\;p} \\
    &= e^{-2(k-1)}\kappa \cxwt^{-1}\cwd^{\;p}.
\end{align*}
The case $p_i\geq 0$ and $q_i < 0$ is similar, and, in the case where both are negative,
we use that $\cwd^{\;p}\cxwd^{\;q} = 0$.
\end{proof}

\section{Representing elements using Schubert varieties}\label{sec:representing}

Nonequivariantly, it is well known that the Euler class of a smooth bundle is represented
by the zero locus of a section of the bundle transverse to the zero section,
where we use ``represented'' in the sense of Definition~\ref{def:cohomrep}.
The same can be shown equivariantly and can be improved to allow cases where the
section is not transverse, but the zero locus is a singular manifold
with codimension-2 singularities as in Definition~\ref{def:codim2singularities}.
We will not give the proof here, but take this as motivation for the following discussions.
Our main goal is to show how to represent various terms that appear
in Proposition~\ref{prop:nIV less nII}.

\subsection{The Euler class of $\chi O(2)$}
At the beginning of \S\ref{sec:EulerSums} we introduced the Euler class
\[
    \chi Q = e(\chi O(2)) = \cxwt\cwd + \cwt\cxwd.
\]
The following gives two ways to interpret this geometrically.

\begin{proposition}\label{prop:chiQ}
\begin{align*}
    \chi Q &=[\ISc110;\ISc211]^*+[\ISc101;\ISc211]^*\\
    &=[\Xpq{(p-1)}q;\Xpq{(p-1)}{(q-1)}]^* + [\Xpq p{(q-1)};\Xpq{(p-1)}{(q-1)}]^* \\
    &= [\Xpq{(p-1)}q \union \Xpq p{(q-1)}; \Xpq{(p-1)}{(q-1)}]^*.
\end{align*}
\end{proposition}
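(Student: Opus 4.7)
My plan is to interpret each term of the decomposition $\chi Q = \cxwt \cwd + \cwt \cxwd$ geometrically via Corollary \ref{cor:powersOfZeta}, and then to repackage the sum of the two resulting singular manifolds as a single representative whose underlying space is a union (rather than a disjoint union). The first two displayed equalities are essentially bookkeeping; the substantive step is the third.

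For the first two equalities, I would apply Corollary \ref{cor:powersOfZeta} to each term. Taking $k=1$, $p'=p-1$, $q'=q$ in the first part of that corollary yields $\cxwt\cwd = [\Xpq{(p-1)}{q}; \Xpq{(p-1)}{(q-1)}]^*$, and taking $k=1$, $p'=p$, $q'=q-1$ in the second part yields $\cwt\cxwd = [\Xpq{p}{(q-1)}; \Xpq{(p-1)}{(q-1)}]^*$. Summing gives the middle line of the proposition, and the first line then follows by translating to the $\ISc{}{}{}$-notation via $\ISc{\lambda_1}{\lambda_1^+}{\lambda_1^-} = \Xpq{p - \lambda_1^+}{(q - \lambda_1^-)}$.

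For the third equality, I would use additivity of representations: the sum of the two classes in the middle line equals
\[
    [\Xpq{(p-1)}{q} \disjunion \Xpq{p}{(q-1)}; \Xpq{(p-1)}{(q-1)} \disjunion \Xpq{(p-1)}{(q-1)}]^*,
\]
with each piece mapped into $\Xpq pq$ by its inclusion. I would then invoke Lemma \ref{lem:replacement} to identify this with $[\Xpq{(p-1)}q \union \Xpq p{(q-1)}; \Xpq{(p-1)}{(q-1)}]^*$. The requirement is that both singular manifolds share a common nonsingular part $N$. This holds because the two invariant hyperplane subspaces meet transversely along $\Xpq{(p-1)}{(q-1)}$ in $\Xpq pq$ (they are defined by the vanishing of two linearly independent coordinates, and a codimension count $2+2=4$ confirms transversality), so removing a tubular neighborhood of $\Xpq{(p-1)}{(q-1)}$ from the union $\Xpq{(p-1)}{q}\union \Xpq{p}{(q-1)}$ gives exactly the same $N$ as removing tubular neighborhoods from each piece of the disjoint union separately. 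The two descriptions then differ only in whether the normal sphere bundles comprising $\partial_1 N$ are glued by $\phi$ into one or two copies of $\Xpq{(p-1)}{(q-1)}$, and Lemma \ref{lem:replacement} says this difference is irrelevant for the class represented.

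The only step requiring any care is verifying the transversality, which underwrites the identification of the two $N$'s; once that is in hand, everything is formal.
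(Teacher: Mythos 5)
Your proposal is correct and follows the same route as the paper: Corollary \ref{cor:powersOfZeta} for the first two equalities, and Lemma \ref{lem:replacement} for the third. The paper gives this proof in two sentences with no elaboration; your fleshed-out argument that the disjoint-union and union descriptions share the same nonsingular part $N$ (because the two hyperplanes meet transversely in $\Xpq{(p-1)}{(q-1)}$, so removing a tubular neighborhood of the intersection separates them) is exactly the content the paper leaves implicit when it invokes Lemma \ref{lem:replacement}.
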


\begin{proof}
The first two expressions follow from Corollary~\ref{cor:powersOfZeta}.
They equal the third expression by Lemma~\ref{lem:replacement}.
\end{proof}

We can motivate the third expression in the proposition by looking at sections, as suggested above.
Writing the coordinates of a point in $\Xpq pq$ as $[x_1:\cdots:x_p:y_1:\cdots:y_q]$,
the polynomial $x_1y_1$ gives a section of $\chi O(2)$.
Its zero locus is precisely $\Xpq{(p-1)}q \union \Xpq p{(q-1)}$,
which is not a smooth manifold, but if we take $\Xpq{(p-1)}{(q-1)}$, the intersection of these two hyperplanes,
as singular part, we get a singular manifold with codimension-2 singularities.

\subsection{The Euler class of $O(2)$}\label{sub:Q}
Fundamental to the formula in Proposition~\ref{prop:nIV less nII}
is the element
\[
    Q = e(O(2)) = \tau(c) + e^{-2}\kappa \cwd\cxwd.
\]
Proposition~\ref{prop:schubertRepresents} gives us the following as an immediate corollary.

\begin{corollary}\label{cor:Q1}
\[
    Q = [\tSpqk{p-1}{q-1}{p+q-1}]^* = [\tBSc 111]^*.
\]
\qed
\end{corollary}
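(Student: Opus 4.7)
The plan is essentially to specialize Proposition~\ref{prop:schubertRepresents} to the values $p_i = p-1$, $q_i = q-1$, $i = p+q-1$, which matches the notation in the first expression, and to read off the equivalence with $\tBSc 111$ from the translation between the two indexing conventions.

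First I would unpack the equivalence of the two representing spaces. Under the conversion $\tBSc{p+q-i}{p-p_i}{q-q_i} = \tSpqk{p_i}{q_i}i$ from \S\ref{sub:binateSchubert}, the triple $(\lambda_1,\lambda_1^+,\lambda_1^-) = (1,1,1)$ corresponds precisely to $(p_i,q_i,i) = (p-1,q-1,p+q-1)$, so the second equality in the corollary is just a change of notation and requires no further argument.

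For the main content, I would compute the quantity $k = i - p_i - q_i = (p+q-1) - (p-1) - (q-1) = 1$, and then plug into the compact form of Proposition~\ref{prop:schubertRepresents}, namely
\[
    [\tSpqk{p_i}{q_i}i]^* = \cwd^{\;p-(i-q_i)}\cxwd^{\,q-(i-p_i)}\bigl(\tau(\cd[k]) + e^{-2k}\kappa\cwd^{\;k}\cxwd^{\;k}\bigr).
\]
Here the exponents on $\cwd$ and $\cxwd$ outside the parenthesis become $p - p = 0$ and $q - q = 0$, so the expression collapses to $\tau(c) + e^{-2}\kappa\cwd\cxwd$, which is exactly the value of $Q$ recorded at the start of \S\ref{sub:Q} (and originally in Proposition~\ref{prop: base case}).

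There is no real obstacle here: the work was done in Proposition~\ref{prop:schubertRepresents}, and the corollary is merely the observation that $Q$ sits in the case $p_i,q_i\geq 0$ with $k=1$. The only thing worth double-checking is that $p_i = p-1$ and $q_i = q-1$ satisfy the hypotheses (\ref{eqn:validms}) of the proposition, which is immediate from $p,q\geq 1$ (so that $\Xpq pq$ and the line bundle $O(2)$ are defined in the first place).
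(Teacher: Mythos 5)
Your proposal is correct and is exactly the intended argument: the paper marks the corollary as an immediate consequence of Proposition~\ref{prop:schubertRepresents}, and you have simply filled in the specialization $p_i = p-1$, $q_i = q-1$, $i = p+q-1$, giving $k=1$ and making the outer exponents vanish. One minor quibble: $\Xpq pq$ and $O(2)$ are defined whenever $p+q>0$, not only when both $p,q\geq 1$; for $p=0$ or $q=0$ one would invoke the extended cases of the proposition (negative $p_i$ or $q_i$), though the surrounding discussion in the paper does itself adopt $p,q>0$ for simplicity.
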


% \begin{proof}
% This is clear from Proposition~\ref{prop:schubertRepresents} when $p\geq 1$ and $q\geq 1$.
% If $p = 0$ and $q > 0$, then $\cxwt$ is invertible in the cohomology of $\Xq q$ and
% \begin{align*}
%     e^{-2}\kappa\cwd\cxwd &= e^{-2}\kappa \cxwt^{-1}(\cxwt\cwd)\cxwd \\
%     &= e^{-2}\kappa\cxwt^{-1}\cxwd((1-\kappa)\cwt\cxwd + e^2) \\
%     &= e^{-2}\kappa\xi \cxwt^{-2}\cxwd^{\;2} + \kappa \cxwt^{-1}\cxwd \\
%     &= \kappa \cxwt^{-1}\cxwd.
% \end{align*}
% Comparing this with Proposition~\ref{prop:schubertRepresents}, we see that the corollary is true
% in this case.
% The case $p>0$ and $q=0$ is similar.
% (Recall that we exclude the trivial case $p=0$ and $q=0$.)
% \end{proof}

We can interpret this result in terms of a section of $O(2)$.
Assume for simplicity that $p>0$ and $q>0$.
The section $s = x_1^2 + y_1^2$ has zero locus
$\PP(W_0)\union \PP(W_1)$ where
\begin{align*}
    W_0 &= \{ (x,y) \in \Cpq pq \mid x_1 = i y_1 \} \\
    W_1 &= \{ (x,y) \in \Cpq pq \mid x_1 = -i y_1 \}.
\end{align*}
Using the notation of \S\ref{sub:binateSchubert}, $W_0$ is a model for $\Fpqk{p-1}{q-1}{p+q-1}$
and $W_1 = tW_0 = t\Fpqk{p-1}{q-1}{p+q-1}$, so the zero locus of $s$ can be identified with
$\Spqk{p-1}{q-1}{p+q-1}$.
Our first thought is to consider $\Spqk{p-1}{q-1}{p+q-1}$ as a singular manifold with singular part
$\Xpq{p-1}{(q-1)}$, but, as discussed in \S\ref{sub:binateSchubert}, the dimensions are wrong and this
does not give us a singular manifold with codimension-2 singularities in the correct dimension.

We can correct this problem by considering $\tilde P$, the blowup of $\Xpq pq$ along $\Xpq{p-1}{(q-1)}$.
Explicitly, this can be modeled as the subspace of $\Xpq pq\times \Xpq 1{}$ defined by
\[
    \tilde P = \{ ([x:y], [u:v]) \in \Xpq pq \times \Xpq 1{} \mid x_1 v = y_1 u \}.
\]
The proper transforms of $\PP(W_0)$ and $\PP(W_1)$ are
\begin{align*}
    \tilde \PP(W_0) &= \{ ([x:y],[u:v])\in \tilde X \mid x_1= iy_1 \text{ and } [u,v] = [1:i] \} \\
    \tilde \PP(W_1) &= \{ ([x:y],[u:v])\in \tilde X \mid x_1= -iy_1 \text{ and } [u,v] = [1:-i] \}.
\end{align*}
We then have $\tilde \PP(W_0)$ and $\tilde \PP(W_1)$ disjoint and 
$\tilde \PP(W_0) \union \tilde \PP(W_1) \homeo \GG\times \PP^{p+q-2}$.
The exceptional divisor in $\tilde P$ is $E = \Xpq{p-1}{(q-1)}\times \Xpq 1{}$,
which intersects $\tilde \PP(W_0)$ in $\Xpq{p-1}{(q-1)}\times [1:i]$ and intersects
$\tilde \PP(W_1)$ in $\Xpq{p-1}{(q-1)}\times [1:-i]$.
These intersections are freely interchanged by $\GG$ giving a copy of 
$\GG\times \Xpq{p-1}{(q-1)} = \GG\times\PP^{p+q-3}$.
This allows us to consider
\[
    (\tilde \PP(W_0) \union \tilde \PP(W_1) \union E; \GG\times\PP^{p+q-3})
\]
as a singular manifold with codimension-2 singularities.
We can remove the singularities using Lemma~\ref{lem:nonsingular}, and the result
is exactly $\tSpqk{p-1}{q-1}{p+q-1}$.

Note that this differs from the nonequivariant algebro-geometric analysis, which would allow us
to replace $\PP(W_0)\union \PP(W_1)$ with $\tilde\PP(W_0)\union \tilde\PP(W_1)$, with no contribution from the
exceptional divisor $E$. Equivariantly, and using ordinary cohomology, we cannot neglect
the exceptional divisor as it contributes the term $e^{-2}\kappa \cwd\cxwd$ to $Q$.
Note that $e^{-2}\kappa$ restricts to 0 nonequivariantly, recovering the classical result.

One further thought about this example: We could also consider the section of $O(2)$ given by
the polynomial $\sum_{i=1}^p x_i^2 + \sum_{j=1}^q y_j^2$. Assuming $p+q > 2$, this is a smooth
section with zero locus the Fermat hypersurface, which we could use as a representative of $Q$.
Taking the proper transform in $\tilde P$ and letting the polynomial deform to $x_1^2+y_1^2$,
we can see that the Fermat hypersurface deforms to $\tilde \PP(W_0) \union \tilde \PP(W_1) \union E$.

\subsection{Powers of $Q$}
We also need to interpret $Q^k$ for $2 \leq k < p+q$. 
By Lemma~\ref{lem Q powers}, we can write
\begin{equation}\label{eqn:Qk}
    Q^k = 2^{k-1}(\tau(c^k) + e^{-2k}\kappa\cwd^{\;k}\cxwd^{\;k}).
\end{equation}
Proposition~\ref{prop:schubertRepresents} has the following corollary.

\begin{corollary}\label{cor:Qk}
For $0\leq k < p+q$,
\[
    Q^k = 2^{k-1}[\tSpqk{p-k}{q-k}{p+q-k}]^* = 2^{k-1}[\tBSc kkk]^*.
\]
\qed
\end{corollary}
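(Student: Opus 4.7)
The plan is a direct application of Proposition~\ref{prop:schubertRepresents} combined with the closed-form expression for $Q^k$ given by Lemma~\ref{lem Q powers}, namely equation~(\ref{eqn:Qk}). I would specialize Proposition~\ref{prop:schubertRepresents} to the triple $(i, p_i, q_i) = (p+q-k,\, p-k,\, q-k)$. With those values, the auxiliary exponent internal to the proposition becomes
\[
    i - p_i - q_i = (p+q-k) - (p-k) - (q-k) = k,
\]
while the two outer exponents collapse to $p - (i - q_i) = 0$ and $q - (i - p_i) = 0$. The second equality in Proposition~\ref{prop:schubertRepresents} therefore reduces to
\[
    [\tSpqk{p-k}{q-k}{p+q-k}]^* = \tau(\cd[k]) + e^{-2k}\kappa\,\cwd^{\;k}\cxwd^{\;k}.
\]

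Multiplying by $2^{k-1}$ and comparing with~(\ref{eqn:Qk}) yields the first asserted equality $Q^k = 2^{k-1}[\tSpqk{p-k}{q-k}{p+q-k}]^*$. The second asserted equality is purely a change of notation: under the dictionary $\tBSc{\lambda_1}{\lambda_1^+}{\lambda_1^-} = \tSpqk{p-\lambda_1^+}{q-\lambda_1^-}{p+q-\lambda_1}$ fixed in \S\ref{sec:Schubert}, the substitution $\lambda_1 = \lambda_1^+ = \lambda_1^- = k$ gives $\tBSc{k}{k}{k} = \tSpqk{p-k}{q-k}{p+q-k}$.

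There is no substantive obstacle here; the argument is a two-line verification once the preceding proposition and lemma are in hand. The only thing meriting a brief sanity check is the range of $k$: when $k > p$ or $k > q$, the values $p_i = p-k$ or $q_i = q-k$ are negative, so one must read Proposition~\ref{prop:schubertRepresents} under the convention $\tSpqk{p_i}{q_i}i = \tSpqk{\max(p_i,0)}{\max(q_i,0)}i$; however, the collapse of the prefactor to $\cwd^{\,0}\cxwd^{\,0} = 1$ is independent of this convention, so nothing further is required. The boundary case $k = 0$ lands on the $i = p_i + q_i$ branch of Definition~\ref{def:binateSchubert}, producing $[\tSpqk{p}{q}{p+q}]^* = 2$, which is consistent with $Q^0 = 1$ and the prefactor $2^{-1}$.
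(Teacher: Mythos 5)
Your proposal is correct and is essentially the same argument the paper has in mind: the paper states Corollary~\ref{cor:Qk} as an immediate consequence of Proposition~\ref{prop:schubertRepresents} together with equation~(\ref{eqn:Qk}), and your specialization $(i,p_i,q_i)=(p+q-k,\,p-k,\,q-k)$, which makes the prefactor $\cwd^{\;p-(i-q_i)}\cxwd^{\,q-(i-p_i)}$ collapse to $1$ and the internal exponent become $k$, is exactly the required verification. Your remarks on the range of $k$ (negative $p_i$ or $q_i$ handled by the convention $\tSpqk{p_i}{q_i}i = \tSpqk{\max(p_i,0)}{\max(q_i,0)}i$, with the last display of Proposition~\ref{prop:schubertRepresents} applying uniformly) and on the $k=0$ boundary case (the $i=p_i+q_i$ branch giving the factor $2$ that cancels the $2^{-1}$) are correct and are the right things to check.
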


% \begin{proof}
% The case $k = 0$ follows from the fact that $[\tSpqk pq{p+q}]^* = 2[\Xpq pq]^* = 2$,
% so assume that $k \geq 1$.

% If $k\leq p$ and $k\leq q$, this follows directly from Proposition~\ref{prop:schubertRepresents}.

% Suppose that $p < k \leq q$. Then, by a calculation similar to the one in the proof of
% Corollary~\ref{cor:Q1},
% \[
%     e^{-2k}\kappa \cwd^{\;k}\cxwd^{\;k} = e^{-2p}\kappa\cxwt^{-(k-p)}\cwd^{\;p}\cxwd^{\;k}.
% \]
% Comparison with Proposition~\ref{prop:schubertRepresents} now verifies the corollary in this case.
% The other cases are similar.
% \end{proof}

We don't have a good interpretation of this representation of $Q^k$ in terms of sections of $kO(2)$.
One issue is that the sphere $S(\R^{1+2k})$ that appears would not arise via any
algebro-geometric construction.

\subsection{Powers of $Q$ times $\zeta$}
The remaining terms that need to be interpreted have the form $\cxwt Q^k$ or $\cwt Q^k$ with $1\leq k < p+q$.

\begin{proposition}\label{prop:zeta times Qk}
For $0\leq k < p+q$,
% \begin{align*}
%     \cxwt Q^k &= 
%         \begin{cases}
%             [\Wpqk pqk; \Wpqk p{q-1}k]^* & \text{if $k < q$} \\
%             2^{k-1}[\GG\times \PP^{p+q-k-1}]^* & \text{if $k\geq q$.}
%         \end{cases}
%      \\
%     \cwt Q^k &= 
%         \begin{cases}
%             [\Wpqk pqk; \Wpqk {p-1}qk]^* & \text{if $k < p$} \\
%             2^{k-1}[\GG\times \PP^{p+q-k-1}]^* & \text{if $k \geq p$.}
%         \end{cases}
% \end{align*}
\begin{align*}
    \cxwt Q^k &= 2^{k-1}[\tSpqk{p-k}{q-k}{p+q-k}; \tSpqk{p-k}{q-k-1}{p+q-k-1}]^*
        = 2^{k-1}[\tBSc kkk; \tBSc {k+1}k{k+1}]^* \\[1.5ex]
    \cwt Q^k &= 2^{k-1}[\tSpqk{p-k}{q-k}{p+q-k}; \tSpqk{p-k-1}{q-k}{p+q-k-1}]^*
        = 2^{k-1}[\tBSc kkk; \tBSc {k+1}{k+1}k]^*
\end{align*}
\end{proposition}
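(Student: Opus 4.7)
The plan is to verify each identity by matching the cohomology class represented by the proposed singular manifold with the algebraic expression for $\cxwt Q^k$ (respectively $\cwt Q^k$), following the pattern used in the proofs of Propositions~\ref{prop:schubertRepresents} and~\ref{prop:powersOfZeta}.

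First, I would compute $\cxwt Q^k$ algebraically. Starting from Lemma~\ref{lem Q powers}'s expression $Q^k = 2^{k-1}\tau(\cd[k]) + (e^{-2}\kappa)^k\cwd^{\;k}\cxwd^{\;k}$ and multiplying by $\cxwt$, I would apply the Frobenius relation to rewrite $\cxwt\tau(\cd[k])$, and iterate relation~(\ref{eqn tensor}) together with the annihilation $\xi\cdot e^{-2j}\kappa = 0$ to simplify $\cxwt\cdot(e^{-2}\kappa)^k\cwd^{\;k}\cxwd^{\;k}$, in the style of the calculation inside the proof of Lemma~\ref{lem:simplification}. This yields an explicit formula consisting of a $\tau$-term and an $e^{-2k}\kappa$-term.

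Second, I would compute $[\tBSc kkk;\tBSc{k+1}k{k+1}]^*$ by adapting the argument in the proof of Proposition~\ref{prop:schubertRepresents}. The key observation is that $\tBSc{k+1}k{k+1}$ embeds naturally into $\tBSc kkk$ on both the free component $\GG\times\PP^{p+q-k-1}$ (as a hyperplane) and the sphere-bundle component $S(\R^{1+2k\sigma})\times\Xpq{p-k}{q-k}$ (via the hyperplane $\Xpq{p-k}{(q-k-1)}\subset\Xpq{p-k}{q-k}$). Adjoining this embedded copy as an additional singular part gives a valid singular manifold representing a class in a grading shifted from that of $\tBSc kkk$ by $\chi\omega-2$, matching the grading of $\cxwt$. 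The nonsingular part then splits into a free affine piece (contributing the $\tau$-term, with the appropriate powers of $\iota$ and $\zeta$ forced by the new grading) and a sphere-bundle-over-affine piece (contributing the $e^{-2k}\kappa$-term via $[S(\R^{1+2k\sigma})]^* = e^{-2k}\kappa$ from \S\ref{subsub:e}). Comparison with the first step yields the claimed equality.

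The formula for $\cwt Q^k$ follows by an entirely parallel argument with the roles of the trivial and sign components of $\Cpq pq$ swapped; this exchanges the added singular part $\tBSc{k+1}k{k+1}$ with $\tBSc{k+1}{k+1}k$. The equality of the two notations in each displayed line follows from the identification $\tSpqk{p_i}{q_i}i = \tBSc{p+q-i}{p-p_i}{q-q_i}$ used throughout Section~\ref{sec:Schubert}.

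The main obstacle will be the careful bookkeeping of two layers of singular structure---the natural one internal to $\tBSc kkk$ and the newly adjoined $\tBSc{k+1}k{k+1}$---and ensuring that the orientations, the fixed-point contributions, and the specific powers of $\iota$ and $\zeta$ match on both sides of the claimed equality. Boundary cases where $q-k$ or $q-k-1$ vanishes will require the kind of case analysis already used in Proposition~\ref{prop:schubertRepresents}.
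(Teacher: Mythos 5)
Your proposal would work, but it takes a somewhat more roundabout route than the paper. You propose computing $\cxwt Q^k$ algebraically from Lemma~\ref{lem Q powers} and separately computing $[\tBSc kkk; \tBSc{k+1}k{k+1}]^*$ in the style of Proposition~\ref{prop:schubertRepresents}, then comparing the two explicit formulas. The paper instead starts directly from Corollary~\ref{cor:Qk}, which already says $Q^k = 2^{k-1}[\tSpqk{p-k}{q-k}{p+q-k}]^*$, and then multiplies by $\cxwt$ piece by piece on the two components of the disjoint union $\GG\times\PP^{p+q-k-1}\sqcup S(\R^{1+2k\sigma})\times\Xpq{p-k}{(q-k)}$: on the free component, multiplying by $\cxwt$ is a dimension reinterpretation plus Lemma~\ref{lem:nonsingular}; on the sphere component, Corollary~\ref{cor:powersOfZeta} converts $\cxwt\cdot[\Xpq{p-k}{(q-k)}]^*$ into $[\Xpq{p-k}{(q-k)};\Xpq{p-k}{(q-k-1)}]^*$. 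Reassembling gives exactly $(\tSpqk{p-k}{q-k}{p+q-k};\tSpqk{p-k}{q-k-1}{p+q-k-1})$ with no need for the explicit algebraic formula for $\cxwt Q^k$. Your second step is essentially this recognition argument, so the real difference is that you also perform an independent algebraic verification in your first step; the paper skips it because Corollary~\ref{cor:Qk} already certifies one side. Your approach is sound but the algebraic simplification of $(e^{-2}\kappa)^k\cxwt\cwd^{\;k}\cxwd^{\;k}$ that you gesture at in step one is actually somewhat fiddly (it doesn't reduce to a clean $e^{-2k}\kappa$-times-monomial form without further work, because multiplying $\cxwt$ into $\cwd^{\;k}\cxwd^{\;k}$ at a non-ceiling power is not a single clean substitution); the paper's route avoids having to carry that out.
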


\begin{proof}
We look at $\cxwt Q^k$, the case of $\cwt Q^k$ being similar.

We have
\[
    Q^k = 2^{k-1}[\tSpqk{p-k}{q-k}{p+q-k}]^*
\]
from Corollary~\ref{cor:Qk}, so we need to determine $\cxwt[\tSpqk{p-k}{q-k}{p+q-k}]^*$.

If $k = 0$, the result follows from Proposition~\ref{prop:powersOfZeta}, so assume that $k > 0$.
Consider the case $k\leq p$ and $k\leq q$, so
\[
    \tSpqk{p-k}{q-k}{p+q-k} = \GG\times\PP^{p+q-k-1}
        \disjunion \Bigl(S(\R^{1+2k\sigma})\times \Xpq{p-k}{(q-k)}\Bigr).
\]
For the free part, we have
\begin{align*}
    \cxwt[\GG\times\PP^{p+q-k-1}]^*
    &= [\GG\times\PP^{p+q-k-1}]^* \\
    &= [\GG\times\PP^{p+q-k-1}; \GG\times\PP^{p+q-k-2}]^*
\end{align*}
where the first equality involves a reinterpretation of dimension and the second follows
from Lemma~\ref{lem:nonsingular}.
For the other part, we have
\[
    \cxwt[\Xpq{p-k}{(q-k)}]^* = [\Xpq{p-k}{(q-k)}; \Xpq{p-k}{(q-k-1)}]^*
\]
by Corollary~\ref{cor:powersOfZeta}. Putting these representations together shows the proposition
in this case.

The other cases, where $k>p$ or $k>q$, follow similarly.
\end{proof}

We note the following special cases where the representation simplifies.

\begin{corollary}\label{cor:zeta times Qk simpl}
If $k\geq p$, one has
\[
[\tSpqk{p-k}{q-k}{p+q-k}; \tSpqk{p-k}{q-k-1}{p+q-k-1}]^* = [\tSpqk{p-k}{q-k}{p+q-k}]^*,
\]
which implies
\[
    \cxwt Q^k = 2^{k-1}[\tSpqk{p-k}{q-k}{p+q-k}]^* = 2^{k-1}[\tBSc kkk]^*.
\]
Similarly, if $k\geq q$, one has
\[
[\tSpqk{p-k}{q-k}{p+q-k}; \tSpqk{p-k-1}{q-k}{p+q-k-1}]^* = [\tSpqk{p-k}{q-k}{p+q-k}]^*,
\]
which implies
\[
    \cwt Q^k = 2^{k-1}[\tSpqk{p-k}{q-k}{p+q-k}]^* = 2^{k-1}[\tBSc kkk]^*.
\]
\end{corollary}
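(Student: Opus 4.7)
The plan is to combine Proposition~\ref{prop:zeta times Qk}, which supplies the representation $\cxwt Q^k = 2^{k-1}[\tSpqk{p-k}{q-k}{p+q-k}; \tSpqk{p-k}{q-k-1}{p+q-k-1}]^*$, with Lemma~\ref{lem:nonsingular}, which asserts that adjoining a codimension-$2$ piece as singular part to a smooth manifold does not alter the represented cohomology class. The goal is to show that, when $k \geq p$, the singular part $\tSpqk{p-k}{q-k-1}{p+q-k-1}$ is redundant, so that the first displayed equality holds and the simplified representation $\cxwt Q^k = 2^{k-1}[\tSpqk{p-k}{q-k}{p+q-k}]^*$ follows.

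First I would unfold the extended definition when $k \geq p$: since $p - k \leq 0$, we have $\tSpqk{p-k}{q-k}{p+q-k} = \tSpqk{0}{\max(q-k,0)}{p+q-k}$, which Definition~\ref{def:binateSchubert} presents (in the non-degenerate range $k < q$) as the disjoint union $\GG \times \PP^{p+q-k-1} \disjunion (S(\R^{1+2p\sigma}) \times \Xq{q-k})$. Each component is smooth, so the ambient itself is smooth. Next I would observe that $\tSpqk{p-k}{q-k-1}{p+q-k-1} = \tSpqk{0}{\max(q-k-1,0)}{p+q-k-1}$ embeds componentwise as a codimension-$2$ subspace: on the free side through $\GG \times \PP^{p+q-k-2} \hookrightarrow \GG \times \PP^{p+q-k-1}$, and on the non-free side through $\mathrm{id} \times (\Xq{q-k-1} \hookrightarrow \Xq{q-k})$, both complex codimension $1$. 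The degenerate cases $k = q$ and $k = p+q-1$, in which the non-free piece collapses, reduce to a purely free situation and are handled the same way.

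With these ingredients in place, a direct componentwise application of Lemma~\ref{lem:nonsingular} yields the first displayed equality. Substituting into Proposition~\ref{prop:zeta times Qk} then gives $\cxwt Q^k = 2^{k-1}[\tSpqk{p-k}{q-k}{p+q-k}]^*$, and the equality with $2^{k-1}[\tBSc kkk]^*$ is merely a change of notation via $\tSpqk{p-k}{q-k}{p+q-k} = \tBSc kkk$. The statement for $\cwt Q^k$ when $k \geq q$ is entirely symmetric: the roles of $\omega$ and $\chi\omega$, of $\Xp p$ and $\Xq q$, and of $\cxwt$ and $\cwt$ are interchanged throughout, the non-free piece now mapping into $\Xp p$.

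The main obstacle I anticipate is the grading bookkeeping. The interpreted dimension of $\tSpqk{p-k}{q-k}{p+q-k}$ prescribed by (\ref{eqn:SchubertDim}) differs from the natural smooth dimension of $\tSpqk{0}{\max(q-k,0)}{p+q-k}$ by $(p-k)(\omega - 2\sigma)$, and one must verify that Lemma~\ref{lem:nonsingular} applies in this reinterpreted grading. The justification is that the discrepancy is absorbed componentwise: the free orbit can carry any equivariant grading whose underlying nonequivariant dimension is correct, while the non-free component maps entirely into $\Xq q$, where $\omega$ and $2\sigma$ agree as restrictions of $RO(\Pi B)$-classes, so $(p-k)(\omega-2\sigma)$ restricts to $0$. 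Once this bookkeeping is cleanly recorded, the invocation of Lemma~\ref{lem:nonsingular} is routine.
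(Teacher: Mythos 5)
Your proposal is correct and takes essentially the same route as the paper's proof: unfold the extended definition of the binate Schubert varieties under the hypothesis $k\geq p$, observe that the ambient $\tSpqk{p-k}{q-k}{p+q-k}$ is smooth and the prescribed singular part $\tSpqk{p-k}{q-k-1}{p+q-k-1}$ sits inside it componentwise in real codimension~2 (same sphere factor $S(\R^{1+2p\sigma})$, codimension-1 complex inclusions on the projective factors), and then invoke Lemma~\ref{lem:nonsingular}. Your account of why the dimension reinterpretation succeeds when $k\geq p$ — the discrepancy $(p-k)(\omega-2\sigma)$ restricts to zero both on the free component and on the piece mapping into $\Xq q$, where $\omega=2\sigma$ — matches the paper's remark that the case $k<p$ fails precisely because such a reinterpretation is then impossible. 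One small point in your favor: your explicit identifications (the free part $\GG\times\PP^{p+q-k-1}$, the sphere $S(\R^{1+2p\sigma})$) are the ones that actually follow from Definition~\ref{def:binateSchubert} and the max convention, whereas the corresponding display in the paper's proof appears to contain off-by-one typos; the argument itself is unaffected.
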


\begin{proof}
Consider the case $k\geq p$. Then we have
\begin{align*}
    \tSpqk{p-k}{q-k}{p+q-k} &= \GG\times\PP^{p+q-k} \disjunion \Bigl(S(\R^{1+2k\sigma})\times \Xq{(q-k)}\Bigr) \\
\intertext{and}
    \tSpqk{p-k}{q-k-1}{p+q-k-1} &= \GG\times\PP^{p+q-k-1} \disjunion \Bigl(S(\R^{1+2k\sigma})\times \Xq{(q-k-1)}\Bigr)
\end{align*}
The equality
\[
    [\tSpqk{p-k}{q-k}{p+q-k}; \tSpqk{p-k}{q-k-1}{p+q-k-1}]^* = [\tSpqk{p-k}{q-k}{p+q-k}]^*
\]
then follows from Lemma~\ref{lem:nonsingular}. 
Note that this does not work in the case $k < p$ because it would require a reinterpretation of the dimension
of $\tSpqk{p-k}{q-k}{p+q-k}$ that is not possible.
The case $k\geq q$ is similar.
\end{proof}

\section{A geometric $\GG$-B\'ezout theorem}\label{sec:Bezout}

We can now state and prove our equivariant version of B\'ezout's theorem,
using the equivariant Schubert varieties introduced in \S\ref{sec:Schubert}.

% {\color{red}Check: Does this work properly when the $\Delta$s are negative?
% See comment before Proposition~\ref{prop:nIV less nII}.}

\begin{theorem}[B\'ezout's Theorem]\label{thm:Bezout}
Assume Context~\ref{context} and the notation around it.
\begin{enumerate}
    \item If $\ell = m_0 + m_1 - m \leq 0$, let %$\ell = m-m_0-m_1$,
    $\Delta_{\min} = \min(\Delta_0,\Delta_1)$ and $\Delta_{\max} = \max(\Delta_0,\Delta_1)$. Then
    \begin{align*}
        e(F) &= \frac{\Delta_{\min}}{2}[\tSpqk{m_0}{m_1}m]^* \\
        &\qquad + \frac{\Delta_0-\Delta_{\min}}{2}[\tSpqk{m_0}{m_1-1}m; \tSpqk{m_0-1}{m_1-1}{m-1}]^* \\
        &\qquad + \frac{\Delta_1-\Delta_{\min}}{2}[\tSpqk{m_0-1}{m_1}m; \tSpqk{m_0-1}{m_1-1}{m-1}]^* \\
        &\qquad + \frac{\Delta - \Delta_{\max}}{2}[\Freep m]^*.
    \end{align*}
    % (Note that, if $m_0+m_1 = m$, then $[\tSpqk{m_0}{m_1}m]^*$ is divisible by $2$,
    % even if $\Delta_{\min}$ is not. If $m_0 + m_1 < m$, then $n_\II > 0$, so $\Delta_{\min}$ is even.)

    This formula simplifies in the following special cases:
    \[
        e(F) =
        \begin{cases}
            \dfrac{\Delta_1}{2}[\tSpqk{0}{m_1}m]^* + \dfrac{\Delta-\Delta_1}{2}[\Freep m]^*
                &\text{if $m_0 \leq 0$} \\[3ex]
            \dfrac{\Delta_0}{2}[\tSpqk{m_0}{0}m]^* + \dfrac{\Delta-\Delta_0}{2}[\Freep m]^*
                &\text{if $m_1 \leq 0$} \\[3ex]
            \dfrac{\Delta}{2}[\Freep m]^*
                &\text{if $m_0 \leq 0$ and $m_1 \leq 0$.}
        \end{cases}
    \]
    (Here, we need to interpret $\tSpqk{0}{m_1}m$ as having the correct dimension, and
    similarly for $\tSpqk{m_0}{0}m$.)
    % (Recall that, when $m_0 \leq 0$, $\tSpqk{m_0-1}{m_1}m = \tSpqk{0}{m_1}m$ with a reinterpretation of dimension,
    % and similarly for the case $m_1 \leq 0$.)

    \item If $\ell > 0$, let $\epsilon = \overline\Delta_0$. Then
    \begin{align*}
        e(F)
        &= \epsilon\sum_{j=1}^{\ell-1}\rem{\binom{\ell}{j}} [\Xpq {m_0 - j}{(m-m_0 + j)}; \\
        &\mspace{120mu} \Xpq{m-m_1}{(m-m_0+j)}\union \Xpq{m_0-j}{(m-m_0)}; \\
        &\mspace{120mu} \Xpq{m-m_1}{(m-m_0)}]^* \\
        &\qquad + \Delta_0 [\Xpq{m_0}{(m-m_0)}; \Xpq{m-m_1}{(m-m_0)}]^* \\
        &\qquad + \Delta_1 [\Xpq{m-m_1}{m_1}; \Xpq{m-m_1}{(m-m_0)}]^* \\
        &\qquad + \frac{\Delta - \Delta_0 - \Delta_1 - \epsilon(2^{\beta(\ell)}-2)}{2} [\Freep m]^*
    \end{align*}
\end{enumerate}
We can also write the result in terms of codimensions, as follows.
\begin{enumerate}
    \item If $\ell = n - n_0 - n_1 \leq 0$, let %$\ell = m-m_0-m_1$,
    $\Delta_{\min} = \min(\Delta_0,\Delta_1)$ and $\Delta_{\max} = \max(\Delta_0,\Delta_1)$. Then
    \begin{align*}
        e(F) &= \frac{\Delta_{\min}}{2}[\tBSc{n}{n_0}{n_1}]^* \\
        &\qquad + \frac{\Delta_0-\Delta_{\min}}{2}[\tBSc{n}{n_0}{n_1+1}; \tBSc{n+1}{n_0+1}{n_1+1}]^* \\
        &\qquad + \frac{\Delta_1-\Delta_{\min}}{2}[\tBSc{n}{n_0+1}{n_1}; \tBSc{n+1}{n_0+1}{n_1+1}]^* \\
        &\qquad + \frac{\Delta - \Delta_{\max}}{2}[\Freel {n}]^*.
    \end{align*}

    This formula simplifies in the following special cases:
    \[
        e(F) =
        \begin{cases}
            \dfrac{\Delta_1}{2}[\tBSc{n}{p}{n_1}]^* + \dfrac{\Delta-\Delta_1}{2}[\Freel {n}]^*
                &\text{if $n_0 \geq p$} \\[3ex]
            \dfrac{\Delta_0}{2}[\tBSc{n}{n_0}{q}]^* + \dfrac{\Delta-\Delta_0}{2}[\Freel {n}]^*
                &\text{if $n_1 \geq q$} \\[3ex]
            \dfrac{\Delta}{2}[\Freel {n}]^*
                &\text{if $n_0 \geq p$ and $n_1 \geq q$.}
        \end{cases}
    \]
    (Here, we need to interpret $\tBSc{n}{p}{n_1}$ as having the correct dimension, and
    similarly for $\tBSc{n}{n_0}{q}$.)
    % (Recall that, when $n_0 \geq p$, $\tBSc{n}{n_0+1}{n_1} = \tBSc{n}{p}{n_1}$ with a reinterpretation of dimension,
    % and similarly for the case $n_1 \geq q$.)

    \item If $\ell > 0$, let $\epsilon = \overline\Delta_0$. Then
    \begin{align*}
        e(F)
        &= \epsilon\sum_{j=1}^{\ell-1}\rem{\binom{\ell}{j}} \Big[\ISc n{n_0+j}{n-n_0-j};\ 
                \ISc {n+\ell-j}{n-n_1}{n-n_0-j}  \union \ISc {n+j}{n_0+j}{n-n_0};\ 
                \ISc {n+\ell}{n-n_1}{n-n_0}\Big]^* \\
        &\qquad + \Delta_0 [\ISc n{n_0}{n-n_0}; \ISc {n+\ell}{n-n_1}{n-n_0}]^*
                + \Delta_1 [\ISc n{n-n_1}{n_1}; \ISc {n+\ell}{n-n_1}{n-n_0}]^* \\
        &\qquad + \frac{\Delta - \Delta_0 - \Delta_1 - \epsilon(2^{\beta(\ell)}-2)}{2} [\Freel {n}]^*.
    \end{align*}
\end{enumerate}
\end{theorem}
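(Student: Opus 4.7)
The proof is essentially a translation exercise: I start from the algebraic expressions for $e(F)$ given in Propositions~\ref{prop:nIV less nII} and~\ref{prop:nII less nIV}, and replace each cohomology class appearing there with the geometric representative provided in Sections~\ref{sec:Schubert} and~\ref{sec:representing}. The coefficients in the theorem will then follow from the coefficients in those propositions together with the scalar factors introduced by the representation formulas.

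For case~(1) ($\ell \leq 0$), I work term by term through Proposition~\ref{prop:nIV less nII}. Combining Corollary~\ref{cor:Qk} with Lemma~\ref{lem Q powers} identifies $\cwd^{\,k_1}\cxwd^{\,k_0}Q^{|\ell|}$ with $2^{|\ell|-1}[\tSpqk{m_0}{m_1}{m}]^*$, so multiplying by $\Delta_{\min}/2^{|\ell|}$ produces the first coefficient $\Delta_{\min}/2$. The two middle terms are handled by using the identity $\cwd^{\,p-p'}\cxwd^{\,q-q'}x = i_!i^*(x)$ from Section~\ref{sub:cohomxpq} to interpret the $\cwd$, $\cxwd$ factors as restriction followed by pushforward to an appropriate sub-projective space, and then applying Proposition~\ref{prop:zeta times Qk} inside that smaller space; for instance the $\cwt Q^{|\ell|+1}$ term is pushed forward from $\Xpq{m-m_1+1}{(m-m_0)}$, where the indices $p'-k$, $q'-k$, $p'+q'-k$ collapse to $m_0$, $m_1-1$, $m$, producing the claimed $[\tSpqk{m_0}{m_1-1}{m}; \tSpqk{m_0-1}{m_1-1}{m-1}]^*$. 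The transfer term $\tau(\iota^{2k_0}\zeta^{k_1-k_0}c^{p+q-m})$ is $[\Freep m]^*$ directly from the definition of the free Schubert variety. The simplified formulas when $m_0\leq 0$ or $m_1\leq 0$ follow from the corresponding simplified expressions in Proposition~\ref{prop:nIV less nII}, together with Corollary~\ref{cor:zeta times Qk simpl}, which removes the singular part once the relevant Schubert variety is defined via the extended conventions of Section~\ref{sub:binateSchubert}.

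For case~(2) ($\ell > 0$), the key algebraic step is to absorb $\xi = \cxwt\cwt$ inside the sum of Proposition~\ref{prop:nII less nIV} to rewrite each summand as $\rem{\binom{\ell}{j}}\,\cxwt^{\,j}\cwt^{\,\ell-j}\cwd^{\,p-m_0+j}\cxwd^{\,k_0-j}$; this is then in precisely the form handled by Corollary~\ref{cor:cs times zetas}, which (with $p'=m_0-j$, $q'=m-m_0+j$, $i=j$, and the second zeta exponent $\ell-j$) produces exactly the triple-nested invariant Schubert variety appearing in the theorem. The two unsummed terms $\Delta_0\cwd^{\,p-m_0}\cxwd^{\,k_0}\cwt^{\,\ell}$ and $\Delta_1\cwd^{\,k_1}\cxwd^{\,q-m_1}\cxwt^{\,\ell}$ are handled directly by Corollary~\ref{cor:powersOfZeta}, and the transfer term is again $[\Freep m]^*$.

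Finally, the codimension formulation is obtained from the affine-dimension formulation by the substitutions $n=p+q-m$, $n_0=p-m_0$, $n_1=q-m_1$, together with the definitional identities $\tSpqk{p_i}{q_i}{i}=\tBSc{p+q-i}{p-p_i}{q-q_i}$ and $\Xpq{p_i}{q_i}=\ISc{i}{p-p_i}{q-q_i}$ relating the two notations. I expect the main obstacle to be bookkeeping: for each term one must verify that the indices $p',q',k$ coming out of the representation lemmas line up with those coming out of Propositions~\ref{prop:nIV less nII} and~\ref{prop:nII less nIV}, and that the resulting singular manifolds satisfy the codimension-2 condition of Definition~\ref{def:codim2singularities}. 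The degenerate cases in which one of $m_0,m_1$ is nonpositive require particular care, since several Schubert varieties then formally acquire negative parameters and must be interpreted via the extended definitions from Section~\ref{sub:binateSchubert}.
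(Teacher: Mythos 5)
Your proposal is correct and follows essentially the same route as the paper's own proof: both start from the algebraic expressions for $e(F)$ in Propositions~\ref{prop:nIV less nII} and~\ref{prop:nII less nIV}, interpret the $\cwd^{\;p-p'}\cxwd^{\;q-q'}$ factors as pushforwards from sub-projective spaces, and then apply Corollary~\ref{cor:Qk}, Proposition~\ref{prop:zeta times Qk}, Corollaries~\ref{cor:powersOfZeta} and~\ref{cor:cs times zetas}, and Corollary~\ref{cor:zeta times Qk simpl} to each term, with your index-chasing (e.g.\ $p'-k=m_0$, $q'-k=m_1-1$, $p'+q'-k=m$ for the $\cwt Q^{|\ell|+1}$ term) matching the paper's computations exactly. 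The only slight imprecision is citing Lemma~\ref{lem Q powers} alongside Corollary~\ref{cor:Qk} when the corollary alone suffices, but this does not affect the argument.
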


\begin{proof}
Suppose first that $m_0 + m_1\leq m$. 
Then $e(F)$ is given by Proposition~\ref{prop:nIV less nII}:
\begin{align*}
    e(F) &= \frac{\Delta_{\min}}{2^{\ell}}\cwd^{\;k_1}\cxwd^{\;k_0} Q^{\ell} 
    + \frac{\Delta_0-\Delta_{\min}}{2^{\ell + 1}} \cwd^{\;k_1-1}\cxwd^{\;k_0} \cwt Q^{\ell+1} \\
    &\qquad {}+ \frac{\Delta_1-\Delta_{\min}}{2^{\ell + 1}} \cwd^{\;k_1}\cxwd^{\;k_0-1} \cxwt Q^{\ell+1} 
    + \frac{\Delta - \Delta_{\max}}{2} \tau(\iota^{2k_0}\zeta^{k_1-k_0} \cd[p+q-m])
\end{align*}
where
\begin{align*}
    k_0 &= q-(m-m_0) \\
    k_1 &= p-(m-m_1) \\
   \mathllap{\text{and}\qquad} \ell &= m - m_0 - m_1.
\end{align*}

By Corollary~\ref{cor:Qk} and Proposition~\ref{prop:zeta times Qk}, we have
\begin{align*}
    \cwd^{\;k_1}\cxwd^{\;k_0} Q^{\ell}
        &= 2^{\ell-1}[\tSpqk{m_0}{m_1}m]^* \\
    \cwd^{\;k_1-1}\cxwd^{\;k_0} \cwt Q^{\ell+1}
        &= 2^{\ell}[\tSpqk{m_0}{m_1-1}m; \tSpqk{m_0-1}{m_1-1}m]^* \\
\intertext{and}
    \cwd^{\;k_1}\cxwd^{\;k_0-1} \cxwt Q^{\ell+1}
        &= 2^{\ell}[\tSpqk{m_0-1}{m_1}m; \tSpqk{m_0-1}{m_1-1}m]^*.
\end{align*}
The first follows from Corollary~\ref{cor:Qk} when we think of $\cwd^{\;k_1}\cxwd^{\;k_0} Q^{\ell}$
as the pushforward of $Q^{\ell}$ from the cohomology of $\Xpq{m-m_1}{(m-m_0)}$,
and similarly for the other two. 
They can also be derived directly from Proposition~\ref{prop:schubertRepresents}.
This gives the general formula in part (1) of the theorem.

If $m_0 \leq 0$, then $\Delta_{\min} = \Delta_0 = 0$. 
(Here, we are implicitly assuming that $\Delta_1 \geq 0$, but a similar simplification
can be given if $\Delta_1 < 0$.)
Together with Corollary~\ref{cor:zeta times Qk simpl},
we get the simplified formula given in part (1). The other simplifications follow similarly.
These simplifications also correspond to the simplified formulas in Proposition~\ref{prop:nIV less nII}.

Now consider the case where $m_0 + m_1 > m$, so that $e(F)$ is given by
Proposition~\ref{prop:nII less nIV}:
\begin{align*}
    e(F) &= \epsilon\xi\sum_{j=1}^{\ell-1}\rem{\binom{\ell}{j}}\cwd^{\;p-m_0+j}\;\cxwd^{\;k_0-j}\,\cxwt^{j-1}\,\cwt^{\;\ell-j-1} \\
    &\qquad {} + \Delta_0 \cwd^{\;p-m_0}\cxwd^{\;k_0}\cwt^{\;\ell}
        + \Delta_1 \cwd^{\;k_1}\cxwd^{\;q-m_1}\cxwt^{\;\ell}   \\
    &\qquad {}+ \frac{\Delta - \Delta_0 - \Delta_1 - \epsilon(2^{\beta(\ell)}-2)}{2}\tau(\iota^{2k_0}\zeta^{k_1-k_0}\cd[p+q-m])
\end{align*}
where $k_0$ and $k_1$ are as above, but now $\ell = m_0 + m_1 - m$.
By Corollary~\ref{cor:cs times zetas}, we have
\begin{align*}
    \xi\cwd^{\;p-m_0+j}\;\cxwd^{\;k_0-j}\,&\cxwt^{j-1}\,\cwt^{\;\ell-j-1}\\
    &= \cwd^{\;p-m_0+j}\;\cxwd^{\;k_0-j}\,\cxwt^{j}\,\cwt^{\;\ell-j} \\
    &= [\Xpq {m_0 - j}{(q- k_0 + j)}; \\
    &\qquad \Xpq{m_0-\ell}{(q-k_0+j)}\union \Xpq{m_0-j}{(q-k_0)}; \\
    &\qquad \Xpq{m_0-\ell}{(q-k_0)}]^* \\
    &= [\Xpq {m_0 - j}{(m - m_0 + j)}; \\
    &\qquad \Xpq{m-m_1}{(m-m_0+j)}\union \Xpq{m_0-j}{(m-m_0)}; \\
    &\qquad \Xpq{m-m_1}{(m-m_0)}]^*
\end{align*}
on substituting the definitions of $k_0$ and $\ell$.
By Corollary~\ref{cor:powersOfZeta}, we have
\begin{align*}
    \cwd^{\;p-m_0}\cxwd^{\;k_0}\cwt^{\ell}
        &= [\Xpq{m_0}{(q-k_0)}; \Xpq{m_0-\ell}{(q-k_0)}]^* \\
        &= [\Xpq{m_0}{(m-m_0)}; \Xpq{m-m_1}{(m-m_0)}]^* \\
    \cwd^{\;k_1}\cxwd^{\;q-m_1}\cxwt^{\;\ell}
        &= [\Xpq{m-m_1}{m_1}; \Xpq{m-m_1}{(m-m_0)}]^*.
\end{align*}
Substituting these gives us the
formula in the theorem.
\end{proof}

To illustrate this result, we look in detail at several special cases.

\subsection{Special case: codimension 1}

This is the case where $n = 1$, meaning that we are looking at the Euler class of a single line bundle $L$. 
In other words, we are giving a geometric interpretation of Proposition~\ref{prop: base case}.

\begin{corollary}\label{cor:codim1}
In the cohomology of $\Xpq pq$, for $k\in\Z$, we have:
% \begin{alignat*}{2}
%  \textup{i)}&\quad& e(O(2k+1)) &= [\Xpq{p-1}q]^* + k[\tSpqk{p-1}{q-1}{p+q-1}; \tSpqk{p-2}{q-1}{p+q-2}]^* \\
%  \textup{ii)}&&  e(O(2k)) &= k[\tSpqk{p-1}{q-1}{p+q-1}]^* \\
%  \textup{iii)}&& e(\chi O(2k+1)) &= [\Xpq p{(q-1)}]^* + k[\tSpqk{p-1}{q-1}{p+q-1}; \tSpqk{p-1}{q-2}{p+q-2}]^* \\
%  \textup{iv)}&& e(\chi O(2k)) &= [\Xpq{p}{(q-1)};\Xpq{p-1}{(q-1)}]^* \\
%                 &&&\qquad + [\Xpq{p-1}{q}; \Xpq{p-1}{(q-1)}]^* \\
%                 &&&\qquad + (k-1)[\Freep{p+q-1}]^*.
% \end{alignat*}
\begin{alignat*}{2}
 \textup{i)}&\quad& e(O(2k+1)) &= [\ISc 110]^* + k[\tBSc 111; \tBSc 221]^* \\
 \textup{ii)}&&  e(O(2k)) &= k[\tBSc 111]^* \\
 \textup{iii)}&& e(\chi O(2k+1)) &= [\ISc 101]^* + k[\tBSc 111; \tBSc 212]^* \\
 \textup{iv)}&& e(\chi O(2k)) &= [\ISc 101;\ISc 211]^* 
                     + [\ISc 110; \ISc 211]^* 
                     + (k-1)[\Freel{1}]^*.
\end{alignat*}
\end{corollary}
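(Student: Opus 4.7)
The plan is to read off each of the four identities directly from the algebraic formulas of Proposition~\ref{prop: base case}, combined with the geometric representatives for the various building blocks established in \S\ref{sub:cohomxpq}, \S\ref{sec:Schubert}, and \S\ref{sec:representing}. Specifically, Proposition~\ref{prop: base case} already supplies the algebraic identities $e(O(2k+1)) = \cwd + k\cwt Q$, $e(O(2k)) = kQ$, $e(\chi O(2k+1)) = \cxwd + k\cxwt Q$, and $e(\chi O(2k)) = \chi Q + (k-1)\tau(\iota^2 c)$, so the only task is to replace each monomial appearing on the right by its Schubert variety representative.

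For (ii), I would invoke Corollary~\ref{cor:Q1} directly to identify $Q = [\tBSc 111]^*$, then multiply through by the integer $k$. For (i), I would combine $\cwd = [\ISc 110]^*$---the $p' = p-1$, $q' = q$ instance of equation~(\ref{eqn FUND})---with the identification $\cwt Q = [\tBSc 111; \tBSc 221]^*$, which is precisely the second formula of Proposition~\ref{prop:zeta times Qk} specialized to $k = 1$. Case (iii) is completely parallel, using $\cxwd = [\ISc 101]^*$ together with $\cxwt Q = [\tBSc 111; \tBSc 212]^*$ from the first formula of Proposition~\ref{prop:zeta times Qk} with $k = 1$.

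For (iv), the geometric representation of $\chi Q$ is supplied by Proposition~\ref{prop:chiQ}, which accounts for the first two summands of the claimed formula. The remaining term $(k-1)\tau(\iota^2 c)$ is then identified with $(k-1)[\Freel{1}]^*$, using the observation from \S\ref{sec:Schubert} that $[\Freel{\underline\lambda}]^* = \tau(\iota^a \zeta^b \cd[\lambda_1])$ for whatever exponents $a$ and $b$ are forced by the grading of the ambient Euler class; here $a = 2$ and $b = 0$ are dictated by the requirement that $[\Freel{1}]^*$ match the grading of $e(\chi O(2k))$.

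The corollary carries no new conceptual content beyond what was done in \S\ref{sec:representing}; the only matter requiring care is keeping the gradings and the triples $(\lambda_1, \lambda_1^+, \lambda_1^-)$ straight in each of the four cases, so that the codimensions and fixed-point data of the singular manifold written down truly match those of the algebraic class being represented. I expect this verification to be entirely routine bookkeeping, with no genuine obstacle.
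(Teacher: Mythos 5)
Your proof is correct, but it follows a genuinely different route from the paper's own argument. You derive the four identities bottom-up, starting from the elementary algebraic formulas in Proposition~\ref{prop: base case} and substituting the geometric representatives built up in \S\ref{sub:cohomxpq}, \S\ref{sec:Schubert}, and \S\ref{sec:representing}: $\cwd = [\ISc 110]^*$ and $\cxwd = [\ISc 101]^*$ from equation~(\ref{eqn FUND}), $\eoz = [\tBSc 111]^*$ from Corollary~\ref{cor:Q1}, $\cwt\eoz = [\tBSc 111; \tBSc 221]^*$ and $\cxwt\eoz = [\tBSc 111; \tBSc 212]^*$ from Proposition~\ref{prop:zeta times Qk} with $k=1$, $\exoz$ from Proposition~\ref{prop:chiQ}, and $\tau(\iota^2\cd) = [\Freel 1]^*$ from the dimension-reinterpretation allowed for free Schubert varieties. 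This never touches Theorem~\ref{thm:Bezout}.

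The paper instead proves the corollary top-down by specializing Theorem~\ref{thm:Bezout} to $n=1$: for each of the four types it records the data $(n_0,n_1)$ and $(\Delta,\Delta_0,\Delta_1)$, plugs into the appropriate branch of the theorem, and simplifies. For example, in case (i) one has $n_0=1$, $n_1=0$, $\Delta_0=2k+1$, $\Delta_1=1$, so the theorem gives $\tfrac{1}{2}[\tBSc 110]^* + k[\tBSc 111;\tBSc 221]^*$, which collapses to the stated form via the identity $[\tBSc 110]^* = 2[\ISc 110]^*$ from Proposition~\ref{prop:schubertRepresents}. Both arguments are complete. Your version is shorter and has the minor advantage of being uniform in $k\in\ZZ$ without any case distinction on the sign, since Proposition~\ref{prop: base case} is; the paper's version is what one would expect for something labeled a corollary of Theorem~\ref{thm:Bezout}, and serves the expository purpose of illustrating the general B\'ezout formula in the simplest possible case, effectively checking it against the directly-computed Euler classes of Proposition~\ref{prop: base case}.
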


\begin{proof}
% {\color{red}Redo using the codimension notation versions.

% In each case we have $n = 1$ and we write $L$ for the line bundle in question.

% (i): In this case we have $n_0 = 1$ and $n_1 = 0$ and the degrees are given by $\Delta_0 = 2k+1$, $\Delta_1 = 1$, and
% $\Delta = \Delta_0$. As a consequence Theorem~\ref{thm:Bezout} gives us
% \begin{align*}
%     e(L) &= \frac 12[\tSpqk{p-1}{q}{p+q-1}]^* + \frac{\Delta_0-1}2[\tSpqk{p-1}{q-1}{p+q-1}; \tSpqk{p-2}{q-1}{p+q-2}]^* \\
%     &= [\Xpq{p-1}q]^* + k[\tSpqk{p-1}{q-1}{p+q-1}; \tSpqk{p-2}{q-1}{p+q-2}]^*.
% \end{align*}
% (iii) is similar.

% (ii): This is the case $n_0 = 1$ and $n_1 = 1$, so $m_0 = p-1$ and $m_1 = q-1$.
% We have $\Delta = \Delta_0 = \Delta_1 = 2k$.
% Theorem~\ref{thm:Bezout} gives us
% \[
%     e(L) = \frac{\Delta}2[\tSpqk{p-1}{q-1}{p+q-1}]^* = k[\tSpqk{p-1}{q-1}{p+q-1}]^*.
% \]

% (iv): In the last case, $n_0 = 0$ and $n_1 = 0$, so $m_0 = p$ and $m_1 = q$.
% We have $\Delta_0 = \Delta_1 = 1$ and $\Delta = 2k$.
% Theorem~\ref{thm:Bezout} gives us
% \begin{align*}
%     e(L) &= [\Xpq{p}{(q-1)}; \Xpq{p-1}{(q-1)}]^* + [\Xpq{p-1}q; \Xpq{p-1}{(q-1)}]^* \\
%     &\qquad + \frac{\Delta-2}2[\Freep{p+q-1}]^* \\
%     &= [\Xpq{p}{(q-1)}; \Xpq{p-1}{(q-1)}]^* + [\Xpq{p-1}q; \Xpq{p-1}{(q-1)}]^* \\
%     &\qquad + (k-1)[\Freep{p+q-1}]^*. \qedhere
% \end{align*}

% }

In each case we have $n = 1$. Write $L$ for the line bundle in question.

(i): This is the case $n_0 = 1$ and $n_1 = 0$.
We have $\Delta_0 = 2k+1$, $\Delta_1 = 1$, and
$\Delta = \Delta_0$, so Theorem~\ref{thm:Bezout} gives us
\begin{align*}
    e(L) &= \frac 12[\tBSc 110]^* + \frac{\Delta_0-1}2[\tBSc 111; \tBSc 221]^* \\
    &= [\ISc 110]^* + k[\tBSc 111; \tBSc 221]^*.
\end{align*}
(iii) is similar.

(ii): This is the case $n_0 = 1$ and $n_1 = 1$.
We have $\Delta = \Delta_0 = \Delta_1 = 2k$.
Theorem~\ref{thm:Bezout} gives us
\[
    e(L) = \frac{\Delta}2[\tBSc 111]^* = k[\tBSc 111]^*.
\]

(iv): In the last case, $n_0 = 0$ and $n_1 = 0$.
We have $\Delta_0 = \Delta_1 = 1$ and $\Delta = 2k$.
Theorem~\ref{thm:Bezout} gives us
\begin{align*}
    e(L) &= [\ISc 101; \ISc 211]^* + [\ISc 110; \ISc 211]^* + \frac{\Delta-2}2[\Freel 1]^* \\
    &= [\ISc 101; \ISc 211]^* + [\ISc 110; \ISc 211]^* + (k-1)[\Freel 1]^*. \qedhere
\end{align*}
\end{proof}

\subsection{Special case: dimension 0}

In \cite{CHTFiniteProjSpace}, we gave a 0-dimensional B\'ezout theorem as an application of our calculation
of the cohomology of finite projective spaces. We now show how to recover that result as a consequence of
our general B\'ezout theorem. In stating our result we will use the following spaces, which we here interpret geometrically for the reader's convenience.

\vspace{0.2 cm}
\begin{description}\setlength{\itemsep}{7pt}
\item [$\Xp{}$\,] Stands for a point in $\Xp{p}$, so a point fixed by the $C_2 $-action, whose associated affine line is pointwise fixed.

\item [$\Xq{}$\,] Stands for a point in $\Xq{q}$, so a point fixed by the $C_2 $-action, whose associated affine line is flipped by the action.

\item [$\Freep 1$\,] Stands for a pair of points that are exchanged by the $C_2$-action, so a copy of a free orbit $\GG$. At the affine level, these are two distinct lines that are exchanged by the action of $C_2$.
\end{description}

\begin{corollary}[Theorem~7.12 of \cite{CHTFiniteProjSpace}]
If $n = p+q-1$, so $m=1$, then
\[
    e(F) = \Delta_0[\Xp{}]^* + \Delta_1[\Xq{}]^* + \frac{\Delta-\Delta_0-\Delta_1}{2}[\Freep 1]^*.
\]
\end{corollary}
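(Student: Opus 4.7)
The plan is to specialize Theorem~\ref{thm:Bezout} to $m = 1$ and verify that each arising sub-case collapses to the three-term formula in the corollary. Since $m = 1$, while Context~\ref{context} demands $0 \leq m - m_0 \leq q$ and $0 \leq m - m_1 \leq p$, the pair $(m_0, m_1)$ has both entries at most $1$. The argument then reduces to a short case analysis organized by the sign of $\ell = m_0 + m_1 - 1$.

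First I would handle the generic sub-case $m_0 = m_1 = 1$, so $\ell = 1 > 0$, using part (2) of Theorem~\ref{thm:Bezout}. Here the summation index ranges over $1 \leq j \leq \ell - 1 = 0$ and is therefore empty, while the identity $2^{\beta(1)} - 2 = 0$ kills the $\epsilon$-dependent correction in the constant term. Substituting $m = m_0 = m_1 = 1$ collapses the invariant Schubert subvarieties $\Xpq{1}{0}$ and $\Xpq{0}{1}$ to $\Xp{}$ and $\Xq{}$ respectively, with the common singular part $\Xpq{0}{0} = \emptyset$ contributing nothing, which yields the stated formula.

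Next I would treat the boundary sub-cases $(m_0, m_1) = (1, m_1)$ with $m_1 \leq 0$, and the symmetric $(m_0, 1)$ with $m_0 \leq 0$, both falling under the simplified formulas of part (1) of Theorem~\ref{thm:Bezout}. In the first, part (1) gives $e(F) = \tfrac{\Delta_0}{2}[\tSpqk{1}{0}{1}]^* + \tfrac{\Delta - \Delta_0}{2}[\Freep{1}]^*$; since $k = i - p_i - q_i = 0$ for this choice, Proposition~\ref{prop:schubertRepresents} simplifies $[\tSpqk{1}{0}{1}]^*$ to $2[\Xp{}]^*$, and the factor of $2$ absorbs the denominator. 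Combined with $\Delta_1 = 0$ (forced by $m_1 \leq 0$), this matches the claim; the symmetric case is identical after applying $\chi$. Finally, the residual range $m_0 \leq 0$ and $m_1 \leq 0$ is covered directly by the last simplified formula of part (1), namely $e(F) = \tfrac{\Delta}{2}[\Freep{1}]^*$, which matches the corollary because $\Delta_0 = \Delta_1 = 0$ there.

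The main obstacle is the bookkeeping around the $\ell = 1$ sub-case, where one must confirm that the potential transfer contribution of the form $\tau(\iota^{2k_0}\zeta^{k_1 - k_0}\cd[p+q-m])$ is absent; this is precisely the content of the arithmetic identity $2^{\beta(1)} - 2 = 0$. All remaining steps are routine substitutions into the three sub-cases of Theorem~\ref{thm:Bezout} together with the $k = 0$ specialization of Proposition~\ref{prop:schubertRepresents}.
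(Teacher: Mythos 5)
Your proposal is correct and follows essentially the same route as the paper: specialize Theorem~\ref{thm:Bezout} by splitting into the three cases $(m_0,m_1)=(1,1)$, exactly one of $m_0,m_1$ equal to $1$, and both $\leq 0$, and in each case collapse the Schubert terms (using the $k=0$ case of Proposition~\ref{prop:schubertRepresents} and the arithmetic fact that $2^{\beta(1)}-2=0$ and the sum is empty). The only small difference is that you spell out the $2^{\beta(1)}-2=0$ cancellation explicitly while the paper absorbs it silently; this is cosmetic, not a different method.
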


Thus, $e(F)$ is represented by the finite $\GG$-set mapping into $\Xpq pq$ described as
$\Delta_0$ fixed points in $\Xp p$, $\Delta_1$ fixed points in $\Xq q$,
and $(\Delta-\Delta_0-\Delta_1)/2$ free orbits.
Recall our convention that $\Delta_0 = 0$ if $n_0\geq p$ ($m_0 \leq 0$) and $\Delta_1 = 0$ if $n_1\geq q$ ($m_1\leq 0$).

\begin{proof}
With $m = 1$, we have $m_0\leq 1$ and $m_1\leq 1$.
That means there is only one case where $m_0+m_1 > m$, which is $m_0 = m_1 = 1$.
Theorem~\ref{thm:Bezout} then gives us
\[
e(F) = \Delta_0[\Xp{}]^* + \Delta_1[\Xq{}]^* + \frac{\Delta-\Delta_0-\Delta_1}{2}[\Freep 1]^*
\]
on removing empty spaces.
%which gives us the corollary since $\Freep 1 = \GG$.

If $m_0 + m_1 \leq m$, then we will have either $m_0\leq 0$, so $\Delta_0 = 0$, or $m_1\leq 0$, so $\Delta_1 = 0$.
This gives $\Delta_{\min} = 0$.
(As elsewhere, we are implicitly assuming that $\Delta_0 \geq 0$. A similar analysis
can be made if $\Delta_0 < 0$.)
Consider the case $m_0 = 1$ and $m_1\leq 0$.
Then one of the simplified forms in Theorem~\ref{thm:Bezout} gives us
\[
    e(F) = \frac{\Delta_0}2[\tSpqk{1}{m_0-1}1]^* + \frac{\Delta-\Delta_0}2[\Freep 1]^*
\]
Now
\begin{align*}
    [\tSpqk 1{m_0-1}1]^* &= [\tSpqk 101]^* \\
    &= [\Xp{}\disjunion\Xp{}]^* \\
    &= 2[\Xp{}]^*,
\end{align*}
which verifies the formula in the corollary.
The case $m_0\leq 0$ and $m_1 = 1$ is similar.

Finally, the case $m_0 \leq 0$ and $m_1\leq 0$ has $\Delta_0 = \Delta_1 = 0$.
In this case, Theorem~\ref{thm:Bezout} gives us
\[
    e(F) = \frac{\Delta}2[\Freep 1]^*,% = \frac{\Delta}2[\GG]^*,
\]
completing the verification of the corollary.
\end{proof}

\subsection{Special case: dimension 1}
In this case we have that $n=p+q-2$, so $m=2$, and the different instances of Theorem \ref{thm:Bezout} can be summarized in the following table. 

\begin{center}
\phantom{a}\hspace{-4 cm}
\begin{tabular}{|c|c|c|c|}
\hline
&&&\\
 $m_0$ \textbackslash \  $m_1$   &   2  &   1  &  $\leq\,$0\\
 &&&\\
\hline
 &&&\\ %[-0.2cm]
2&$\Delta_0[\Xp{2}]^*
+\Delta_1[\Xq{2}]^*$
&${\displaystyle \Delta_0[\Xp{2}]^*
+\Delta_1[\Xpq{1}{};\Xp{}]^*}$
&$\Delta_0[\Xp{2}]^*$
\\[0.2 cm]
&
${\displaystyle 
+\frac{\Delta-\Delta_0-\Delta_1}2[\Freep 2]^*}$
&
${\displaystyle
+\frac{\Delta-\Delta_0-\Delta_1}2[\Freep 2]^*}$
&${\displaystyle +\frac{\Delta-\Delta_1}2[\Freep 2]^*}$
\\[0.4 cm]
\hline
 &&&\\ %[-0.3cm]
1&$\Delta_0\big[\Xpq{1}{};\Xq{}\big]^*
+\Delta_1[\Xq{2}]^*$
&${\displaystyle
\Delta_{\min}\big[\Xpq{1}{}\big]^*
+\frac{\Delta_0-\Delta_{\min}}2[\tSpqk{1}{0}{2}]^*}$
&$
\Delta_0[\tSpqk{1}{0}{2}]^*
$
\\[0.4 cm]
&
${\displaystyle +\frac{\Delta-\Delta_0-\Delta_1}2[\Freep 2]^*}$
&
${\displaystyle +\frac{\Delta_1-\Delta_{\min}}2[\tSpqk{0}{1}{2}]^*
+\frac{\Delta-\Delta_{\max}}2[\Freep 2]^*}$
&${\displaystyle +\frac{\Delta-\Delta_0}2[\Freep 2]^*}$
\\[0.4 cm]
\hline
 &&&\\ %[-0.3cm]
$\leq 0$&${\displaystyle
\Delta_1[\Xq{2}]^*
+\frac{\Delta-\Delta_1}2[\Freep 2]^*}$
&${\displaystyle
\frac{\Delta_1}2[\tSpqk{0}{1}{2}]^*+\frac{\Delta-\Delta_1}2[\Freep 2]^*}$
&${\displaystyle \frac{\Delta}2[\Freep 2]^*}
$
\\[0.4 cm]
\hline
\end{tabular}
\end{center}

\vspace{0.2 cm}
Here follows a geometric description of the equivariant spaces appearing in the table.

\vspace{0.2 cm}

\begin{description}\setlength{\itemsep}{7pt}
\item [$\Xp{2}$\,] Stands for a $\PP^1$ which is pointwise fixed by the $C_2 $-action, even at the affine level.

\item [$\Xq{2}$\,] Stands for a $\PP^1$ which is pointwise fixed by the $C_2 $-action, however the affine lines representing the projective points are flipped by the action.

\item [$\Xpq 1{}$\,] Stands for a $\PP^1$, which only has two fixed points. The two affine lines corresponding to these points are respectively pointwise fixed and flipped by the $C_2$-action.

\item [$\big(\Xpq{1}{};\Xq{ }\big)$\,] stands for a $\PP^1$, as in the previous entry, but with the fixed point $\Xq{}$ taken as the singular part. This modification allows us to reinterpret the dimension: The natural dimension of $\Xpq 1{}$ is $\omega+\chi\omega-2$,
but we consider it to have dimension $\omega+2\chi\omega -2-2\sigma = (\omega+\chi\omega-2)+(\chi\omega-2\sigma)$,
which is possible because $\chi\omega = 2\sigma$ away from the fixed point $\Xq{}$. This puts $[\Xpq{1}{};\Xq{ }]^*$ in the correct grading.

\item [$\big(\Xpq{1}{};\Xp{ }\big)$\,] stands for a $\PP^1$, as in the previous entry, but this time the fixed point $\Xp{}$ is taken as the singular part. Again, this modification allows us to reinterpret the dimension.

\item [$\tSpqk{1}{0}{2}$\,] Stands for the desingularization of $\Spqk{1}{0}{2}$. The latter is the union of two $\PP^1$s intersecting in a fixed point, which corresponds to an  affine line pointwise fixed by the action.

\item [$\tSpqk{0}{1}{2}$\,] Stands for the desingularization of $\Spqk{0}{1}{2}$. The latter is the union of two $\PP^1$s intersecting in a fixed point, which corresponds to an  affine line flipped by the $C_2$-action.

\item [$\Freep 2$] Stands for a pair of $\PP^1$'s that are exchanged by the $C_2$-action.
\end{description}

\subsection{Special case: dimension 2}

Giving the full list of all the different scenarios that can arise in dimension two would not add much to what we have already seen, so we have chosen to focus on two cases which highlight some aspects of Theorem~\ref{thm:Bezout} that cannot be appreciated in dimensions 0 and 1. 

We first consider the case in which $m=m_0=m_1=\ell=3$. This is only possible  if $n_\II=0$, which forces $d_\II=1$ and hence $\epsilon=1$. In this setting the formula yields
    \begin{align*}
        e(F)
        &= \rem{\binom{3}{1}} \Big[\Xpq {2}{}; \Xq{ }\union \Xp{2}\Big]^*+
    \rem{\binom{3}{2}} \Big[\Xpq {1}{2}; \Xq{2}\union \Xp{ }\Big]^* \\
        &\qquad + \Delta_0 [\Xp{3}]^* + \Delta_1 [\Xq{3}]^*  + \frac{\Delta - \Delta_0 - \Delta_1 - (2^{2}-2)}{2} [\Freep 3]^*\\
&= \Big[\Xpq {2}{}; \Xq{ }\union \Xp{2}\Big]^*+
     \Big[\Xpq {1}{2}; \Xq{2}\union \Xp{ }\Big]^* \\
        &\qquad + \Delta_0 [\Xp{3}]^* + \Delta_1 [\Xq{3}]^*  + \frac{\Delta - \Delta_0 - \Delta_1  -2}{2} [\Freep 3]^*
        \,.
    \end{align*}
 This is the first instance in which the binomial coefficients appearing in the formula are not multiples of 2.
 As in the dimension 1 case, the primary function of the singular parts is to allow us to reinterpret dimensions appropriately.
 
As another example, we consider the case in which $m=3$, $m_0=2$ and $m_1=1$. In this case the formula yields
   \begin{align*}
        e(F) &= \frac{\Delta_{\min}}{2}[\tSpqk{2}{1}3]^* 
         + \frac{\Delta_0-\Delta_{\min}}{2}[\tSpqk{2}{0}3; \tSpqk{1}{0}{2}]^* \\
        &\qquad + \frac{\Delta_1-\Delta_{\min}}{2}[\tSpqk{1}{1}3; \tSpqk{1}{0}{2}]^* 
        + \frac{\Delta - \Delta_{\max}}{2}[\Freep 3]^*\,.
    \end{align*}
 This is the simplest case in which the binate Schubert varieties appear as the singular part of a singular manifold. As an example, let's examine more closely the singular manifold $(\tSpqk{1}{1}3; \tSpqk{1}{0}{2}).$
Here $\tSpqk{1}{1}3$ is the desingularization of $\Spqk{1}{1}3$, the latter consisting of two $\PP^2$s exchanged by the $C_2$-action and intersecting in $\Xpq1{}$. On the other hand, the  singular part $\tSpqk{1}{0}{2}$ projects onto two $\PP^1$s intersecting in $\Xp{}$. 
The intersections in both cases are desingularized by crossing with $S(\R^{1+\sigma})$.

\appendix

\section{Equivariant ordinary cohomology}

\subsection{Ordinary cohomology}\label{app:ordinarycohomology}
In this paper
we use $\GG$-equivariant ordinary cohomology with the extended grading developed in \cite{CostenobleWanerBook}.
This is an extension of Bredon's ordinary cohomology to be graded on representations of the fundamental groupoids
of $\GG$-spaces.
We review here some of the notation and computations we use.
A more detailed description of this theory can also be found in \cite{CHTFiniteProjSpace}.

For an ex-$\GG$-space $Y$ over $X$, we write $H_\GG^{RO(\Pi X)}(Y;\Mackey T)$ for the
ordinary cohomology of $Y$ with coefficients in a Mackey functor $\Mackey T$, graded
on $RO(\Pi X)$, the representation ring of the fundamental groupoid of $X$.
Through most of this paper we use the Burnside ring Mackey functor $\Mackey A$ as the coefficients,
and write simply $H_\GG^{RO(\Pi X)}(Y)$.

In \cite{CostenobleWanerBook} and \cite{CHTFiniteProjSpace} we considered cohomology to be Mackey functor--valued,
which is useful for many computations, and wrote $\Mackey H_\GG^{RO(\Pi X)}(Y)$ for the resulting theory.
In this paper we concentrate on the values at level $\GG/\GG$, and write
$H_\GG^{RO(\Pi X)}(Y) = \Mackey H_\GG^{RO(\Pi X)}(Y)(\GG/\GG)$. However, we 
use extensively the structure maps of the Mackey functor structure, namely the
restriction functor $\rho$ from equivariant cohomology to nonequivariant cohomology, and the transfer
map $\tau$ going in the other direction.
We will also treat nonequivariant cohomology as graded on $RO(\Pi X)$ via the forgetful
map $RO(\Pi_\GG X)\to RO(\Pi_e X)$ from the representation ring of the equivariant fundamental groupoid
of $X$ to the representation ring of its nonequivariant fundamental groupoid.
Another way of saying that is that we view nonequivariant cohomology as
\[
    H^{RO(\Pi X)}(X;\ZZ) = \Mackey H^{RO(\Pi X)}_\GG(X;\Mackey A)(\GG/e).
\]

For all $X$ and $Y$, $H_\GG^{RO(\Pi X)}(Y)$ is a graded module over 
\[
    \copt = \copt^{RO(\GG)} = H_\GG^{RO(\GG)}(S^0),
\]
the cohomology of a point.
The grading on the latter is just $RO(\GG)$, the real representation ring of $\GG$, which is free abelian
on $1$, the class of the trivial representation $\R$, and $\sigma$, the class of the
sign representation $\R^\sigma$. The cohomology of a point was calculated by Stong in an unpublished
manuscript and first published by Lewis in \cite{LewisCP}. We can picture the calculation as 
in Figure~\ref{fig:cohompt},
in which a group in grading $a+b\sigma$ is plotted at the point $(a,b)$, and the spacing of the grid lines
is 2 (which is more conventient for other graphs we will give).
The square box at the origin is a copy of $A(\GG)$, the Burnside ring of $\GG$,
closed circles are copies of $\Z$, and open circles are copies of $\Z/2$.
\begin{figure}
\begin{tikzpicture}[x=4mm, y=4mm]
% grid and axes
	\draw[step=2, gray, very thin] (-7.8, -7.8) grid (7.8, 7.8);
	\draw[thick] (-8, 0) -- (8, 0);
	\draw[thick] (0, -8) -- (0, 8);
    \node[right] at (8,0) {$a$};
    \node[above] at (0,8) {$b\sigma$};
 %   \node[below] at (0,-8) {$\copt^{RO(\GG)}$};

    \fill (-0.3, -0.3) rectangle (0.3, 0.3);
    \fill (0, -7) circle(0.2);
    \fill (0, -6) circle(0.2);
    \fill (0, -5) circle(0.2);
    \fill (0, -4) circle(0.2);
    \fill (0, -3) circle(0.2);
    \fill (0, -2) circle(0.2);
    \fill (0, -1) circle(0.2);
    \fill (0, 1) circle(0.2);
    \fill (0, 2) circle(0.2);
    \fill (0, 3) circle(0.2);
    \fill (0, 4) circle(0.2);
    \fill (0, 5) circle(0.2);
    \fill (0, 6) circle(0.2);
    \fill (0, 7) circle(0.2);

    \fill (-2, 2) circle(0.2);
    \fill (-4, 4) circle(0.2);
    \fill (-6, 6) circle(0.2);

    \draw[fill=white] (-2, 3) circle(0.2);
    \draw[fill=white] (-2, 4) circle(0.2);
    \draw[fill=white] (-2, 5) circle(0.2);
    \draw[fill=white] (-2, 6) circle(0.2);
    \draw[fill=white] (-2, 7) circle(0.2);
    \draw[fill=white] (-4, 5) circle(0.2);
    \draw[fill=white] (-4, 6) circle(0.2);
    \draw[fill=white] (-4, 7) circle(0.2);
    \draw[fill=white] (-6, 7) circle(0.2);

    \fill (2, -2) circle (0.2);
    \fill (4, -4) circle (0.2);
    \fill (6, -6) circle (0.2);
    \draw[fill=white] (3, -3) circle(0.2);
    \draw[fill=white] (5, -5) circle(0.2);
    \draw[fill=white] (7, -7) circle(0.2);
    
    \draw[fill=white] (3, -4) circle(0.2);
    \draw[fill=white] (3, -5) circle(0.2);
    \draw[fill=white] (3, -6) circle(0.2);
    \draw[fill=white] (3, -7) circle(0.2);
    \draw[fill=white] (5, -6) circle(0.2);
    \draw[fill=white] (5, -7) circle(0.2);

    \node[right] at (0,1) {$e$};
    \node[below left] at (-2,2) {$\xi$};
    \node[above right] at (2,-2) {$\tau(\iota^{-2})$};
    \node[left] at (0,-1) {$e^{-1}\kappa$};

\end{tikzpicture}
\caption{$\copt^{RO(\GG)}$}\label{fig:cohompt}
\end{figure}

Recall that $A(\GG)$ is the Grothendieck group of finite $\GG$-sets, with multiplication given by products of sets.
Additively, it is free abelian on the classes of the orbits of $\GG$, for which we write
$1 = [\GG/\GG]$ and $g = [\GG/e]$. The multiplication is given by $g^2 = 2g$.
We also write $\kappa = 2 - g$. Other important elements are shown in the figure:
The group in degree $\sigma$ is generated by an element $e$,
while the group in degree $-2 + 2\sigma$ is generated by an element $\xi$.
The groups in the second quadrant are generated by the products $e^m\xi^n$, with $2e\xi = 0$.
We have $g\xi = 2\xi$ and $ge = 0$.
The groups in gradings $-m\sigma$, $m\geq 1$, are generated by elements $e^{-m}\kappa$, so named
because $e^m\cdot e^{-m}\kappa = \kappa$. We also have $ge^{-m}\kappa = 0$.

To explain $\tau(\iota^{-2})$, we think for moment about the nonequivariant cohomology
of a point. If we grade it on $RO(\GG)$, we get
$H^{RO(\GG)}(S^0;\Z) \iso \Z[\iota^{\pm 1}]$, where $\deg \iota = -1 + \sigma$.
(Nonequivariantly, we cannot tell the difference between $\R$ and $\R^\sigma$.)
We have $\rho(\xi) = \iota^2$ and $\tau(\iota^2) = g\xi = 2\xi$.
Note also that $\tau(1) = g$.
In the fourth quadrant we have that the group in grading $n(1-\sigma)$, $n\geq 2$, is
generated by $\tau(\iota^{-n})$.
The remaining groups in the fourth quadrant will not concern us here.
For more details, see \cite{Co:InfinitePublished} or \cite{CHTFiniteProjSpace}.

\subsection{The cohomology of projective space}\label{app:cohomologyprojective}
We summarize the cohomology of $\Xpq pq$ as calculated in \cite{CHTFiniteProjSpace}.

Write $B = \Xpq \infty\infty$.
Its fixed set is
\[
    B^\GG = \Xp \infty \disjunion \Xq \infty = B^0 \disjunion B^1,
\]
where we use the indices 0 and 1 to evoke the trivial and nontrivial representation
of $\GG$, respectively.
(We use this convention throughout, that an index 0 refers to something related to
$B^0$ and an index 1 refers to something related to $B^1$.)
Representations of $\Pi B$ are determined by their restrictions to $B^0$ and $B^1$,
which are elements of $RO(\GG)$ that must have the same nonequivariant rank and
the same parity for the ranks of their fixed point representations.
As shown in \cite[\S2.2, p.13]{CHTFiniteProjSpace}, this leads to the calculation
that $RO(\Pi B)$ is free abelian on three generators, but we find
the following presentation more useful:
\[
    RO(\Pi B) = \Z\{1,\sigma,\Omega_0,\Omega_1\}/\langle \Omega_0 + \Omega_1 = 2\sigma - 2\rangle,
\]
where $\Omega_0$ is the representation whose value on $B^0$ is $2\sigma - 2$  and
on $B^1$ is $0$;
while $\Omega_1$ is the representation whose value on $B^0$ is $0$ and
on $B^1$ is $2\sigma-2$.
For any $\alpha\in RO(\Pi B)$, write $|\alpha|\in\ZZ$ for its underlying nonequivariant rank,
and $\alpha_0$ and $\alpha_1 \in RO(\GG)$ for its retrictions to $B^0$ and $B^1$, respectively.
What we said above can be phrased as: $\alpha$ is completely determined by the triple of ranks
$(|\alpha|,|\alpha_0^\GG|,|\alpha_1^\GG|)$, where the latter two ranks have the same parity.

We think of the finite projective spaces as spaces over $B$ by the evident inclusions
$\Xpq pq \to \Xpq\infty\infty$, so will grade their cohomologies on $RO(\Pi B)$.
Let $\omega$ denote the tautological line bundle over $B$, let $\omega\dual$ be its dual bundle, let
$\chi\omega = \omega\tensor_\C \C^\sigma$, and let $\chi\omega\dual$ be the dual of $\chi\omega$.
We also use the notation from algebraic geometry in which $\omega = O(-1)$ and $\omega\dual = O(1)$;
we write $\chi O(-1) = \chi\omega$ and $\chi O(1) = \chi\omega\dual$.

Associated to any bundle over $B$ is a representation in $RO(\Pi B)$ that we think of as the
equivariant rank of the bundle; this representation is given by the fiber representations
over $B^0$ and $B^1$. In the case of $\omega$ and $\chi\omega$, we have
\begin{align*}
    \omega &= 2 + \Omega_1 \\
    \chi\omega &= 2 + \Omega_0,
\end{align*}
where we write $\omega$ and $\chi\omega$ again for the associated elements of $RO(\Pi B)$.

Let $\cwd$ and $\cxwd$ denote the Euler classes of $\omega\dual$ and $\chi\omega\dual$, respectively.
The cohomology of $\Xpq\infty\infty$ was calculated in \cite{Co:InfinitePublished} as follows.

\begin{theorem}[{\cite[Theorem 11.3]{Co:InfinitePublished}}]
$H_{\GG}^{RO(\Pi B)}(B_+)$ is an algebra over $\copt$ generated by the 
Euler classes $\cwd$ and $\cxwd$ together with classes $\cxwt$ and $\cwt$. 
These elements live in gradings
\begin{alignat*}{4}
 \grad\cwd &= \omega  \qquad&  \grad\cxwd &= \chiw  \\
 \grad\cwt &= \omega - 2  \qquad& \grad\cxwt &= \chiw-2 
\end{alignat*}
They satisfy the relations
\begin{align*}
		\cxwt \cwt &= \xi \\
        \mathllap{\text{and}\quad}\cwt \cxwd &= (1-\kappa)\cxwt \cwd + e^2,
\end{align*}
and these relations completely determine the algebra.
Moreover, $H_{\GG}^{RO(\Pi B)}(B_+)$ is free as a module over $\copt$.
\qed
\end{theorem}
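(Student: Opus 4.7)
The plan is to reduce the computation for $B = \Xpq\infty\infty$ to a colimit of computations for the finite projective spaces $\Xpq pq$, and to compute those by induction on $p+q$ via equivariant cofiber sequences. Since $B$ is filtered by the $\Xpq pq$ with cellular inclusions, $H_\GG^{RO(\Pi B)}(B_+)$ is the inverse limit of the finite-level cohomologies, so it suffices to produce compatible presentations at each finite stage with classes that restrict correctly.

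First I would construct the four generators. The classes $\cwd = e(\omega\dual)$ and $\cxwd = e(\chi\omega\dual)$ are equivariant Euler classes of the dual tautological bundles, in the expected gradings $\omega$ and $\chiw$. The classes $\cxwt$ and $\cwt$ are more subtle: they live in gradings $\chiw - 2$ and $\omega - 2$ where $H_\GG^\alpha$ of a point is small (namely $A(\GG)$ or $\ZZ$), and they can be constructed by exploiting the geometry of the fixed components, since $\omega$ is trivial on $\Xq\infty = B^1$ and $\chiw$ is trivial on $\Xp\infty = B^0$, so $\cwt$ and $\cxwt$ play the role of a ``divided'' Euler class detecting one fixed component at a time. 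Geometrically, on finite pieces they are represented by the singular manifolds described in Proposition~\ref{prop:powersOfZeta}. With the four generators in hand, the two relations $\cxwt\cwt = \xi$ and $\cwt\cxwd = (1-\kappa)\cxwt\cwd + e^2$ reduce to finite checks: both sides live in gradings where classes are determined by their nonequivariant restrictions and their fixed-point data, so one just verifies agreement on those finitely many pieces.

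To establish freeness and completeness, I would induct on $p+q$ using the equivariant cofiber sequence
\[
    \Xpq{(p-1)}{q}_+ \to \Xpq{p}{q}_+ \to \Xpq{p}{q}/\Xpq{(p-1)}{q},
\]
where the cofiber is the Thom space of an equivariant line bundle over $\Xpq{(p-1)}{q}$, together with the analogous sequence swapping the roles of $p$ and $q$. The Thom isomorphism identifies the cofiber's cohomology with a shift of that of $\Xpq{(p-1)}{q}$, and the long exact sequence splits provided the connecting homomorphism vanishes. This vanishing is guaranteed because the new basis elements at level $p+q$ can all be written as monomials in the four generators restricted from $B$ itself, so they lift manifestly. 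A parallel argument completes the induction, producing a free $\copt$-basis consisting of an explicit set of monomials and simultaneously showing that no further relations beyond the two displayed ones are needed.

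The hard part will be establishing the second relation $\cwt\cxwd = (1-\kappa)\cxwt\cwd + e^2$ with its correct $A(\GG)$-coefficients. Unlike the clean identity $\cxwt\cwt = \xi$, this relation cannot be detected by nonequivariant cohomology alone (which merely sees $\cwt\cxwd = \cxwt\cwd$), and it encodes precisely how the bundles $\omega$ and $\chi\omega$ differ by a twist by $\sigma$. Tracking the factor $(1-\kappa)$ and the correction $e^2$ requires working carefully with the equivariant orientations of the line bundles and with the $A(\GG)$-action on classes in mixed gradings; this is where the equivariant theory genuinely departs from the classical computation for $\PP^\infty$, and it is also what forces the subsequent geometric B\'ezout formula to involve $\kappa$ and transfer classes rather than a single integer count.
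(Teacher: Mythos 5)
The paper does not prove this theorem: it is stated with \verb|\qed| immediately after the citation to \cite{Co:InfinitePublished}, Theorem~11.3, and the appendix is explicitly labeled as a summary of prior results. So there is no ``paper's own proof'' to compare against; I can only evaluate your strategy on its own merits, keeping in mind that the cited reference presumably works directly with the infinite space $B$ (and, judging from the subsequent theorem about $\Xpq pq$, the finite-space calculation in \cite{CHTFiniteProjSpace} was \emph{derived from} the infinite one by pullback, not the other way around).

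Your strategy of reducing to the finite stages and inducting on $p+q$ via cofiber sequences and Thom isomorphisms is the standard machinery and is plausible, but there are two concrete gaps. First, you assert that $H_\GG^{RO(\Pi B)}(B_+)$ ``is the inverse limit of the finite-level cohomologies'' and conclude it suffices to produce compatible presentations; but cohomology of a filtered colimit comes with a Milnor $\lim^1$ correction term, and you never argue that this vanishes. It can be made to vanish here because the restriction maps $H_\GG^{RO(\Pi B)}(\Xpq pq_+)\to H_\GG^{RO(\Pi B)}(\Xpq{(p-1)}{q}_+)$ are surjective in each grading (free modules, Mittag-Leffler), but that must be part of the argument, not a hypothesis. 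Second, and more seriously, your justification for the vanishing of the connecting homomorphism is circular as written: you say the new basis elements at stage $p+q$ ``can all be written as monomials in the four generators restricted from $B$ itself, so they lift manifestly.'' But the cohomology of $B$ is precisely what you are trying to compute, so you cannot invoke classes restricted from $B$ to carry out the induction. The fix is to construct $\cwd$, $\cxwd$, $\cwt$, $\cxwt$ \emph{intrinsically on each finite stage} (Euler classes of pulled-back line bundles, and a direct construction of the divided classes $\cwt$, $\cxwt$ on $\Xpq pq$), verify that these are compatible under the inclusions $\Xpq{(p-1)}{q}\hookrightarrow\Xpq{p}{q}$, and then argue that the monomial basis at stage $p+q-1$ extends because it is the restriction of monomials at stage $p+q$, not because of anything about $B$. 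Finally, your last paragraph correctly identifies the relation $\cwt\cxwd=(1-\kappa)\cxwt\cwd+e^2$ as the real equivariant content, but it remains a discussion of the difficulty rather than a proof; as stated, ``working carefully with the equivariant orientations'' is a placeholder for the step that would actually pin down the $A(\GG)$-coefficients, presumably by evaluating both sides on $\rho$ and $(-)^\GG$ and checking the grading $\omega+\chiw-2$ is detected by those maps.
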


There are two restriction maps we will use,
% \begin{align*}
%     \rho\colon H_\GG^{\alpha}(\Xpq pq_+) &\to H^{|\alpha|}(\Xpq pq_+),
% \intertext{restriction to nonequivariant cohomology, and}
%     (-)^\GG\colon H_\GG^\alpha(\Xpq pq_+) &\to H^{\alpha_0^\GG}(\Xp p_+) \dirsum H^{\alpha_1^\GG}(\Xq q_+),
% \end{align*}
\begin{align*}
    \rho\colon H_\GG^{\alpha}(B_+) &\to H^{\alpha}(B_+),
\intertext{restriction to nonequivariant cohomology, and}
    (-)^\GG\colon H_\GG^\alpha(B_+) &\to H^{\alpha_0^\GG}(B^0_+) \dirsum H^{\alpha_1^\GG}(B^1_+),
\end{align*}
the fixed-point map. 
In the case of $\rho$, recall that we are considering nonequivariant cohomology to
be graded on $RO(\Pi B)$ via the forgetful homomorphism
$RO(\Pi_\GG B)\to RO(\Pi B) \iso \ZZ$, which is just the map $\alpha\mapsto |\alpha|$.
% \[
%     H^\alpha(B_+) = H^{|\alpha|}(B_+)
% \]
% for $\alpha\in RO(\Pi B)$, and we shall use consider $\rho$ an $RO(\Pi B)$-graded map henceforth.
When we impose this regrading, $H^{RO(\Pi B)}(B_+)$ acquires two new invertible elements,
\[
    \iota \in H^{\sigma-1}(B_+) \qquad\text{and}\qquad \zeta\in H^{\omega-2}(B_+)
\]
expressing the fact that $|\sigma - 1| = 0 = |\omega-2|$.

The restriction maps are ring maps and their values on the multiplicative generators are
\begin{equation}\label{eqn:restrictions}
\begin{aligned}
    \rho(\cxwt) &= \iota^2\zeta^{-1} &  \cxwt^\GG &= (0,1) \\
    \rho(\cwt) &= \zeta & \cwt^\GG &= (1,0) \\
    \rho(\cwd) &= \zeta\cd & \cwd^{\;\GG} &= (\cd, 1) \\
    \rho(\cxwd) &= \iota^2\zeta^{-1}\cd \qquad\qquad& \cxwd^{\;\GG} &= (1, \cd).
\end{aligned}
\end{equation}
Here, $\cd \in H^2(\Xp{\infty}_+)$ denotes the first nonequivariant Chern class of $O(1)$.
There are similar restriction maps
\begin{align*}
    \rho\colon \copt^\alpha &\to H^{\alpha}(S^0), \\
    (-)^\GG\colon \copt^\alpha &\to H^{\alpha^\GG}(S^0).
\end{align*}
The values on the most interesting elements are
\begin{equation}\label{eqn:restrpoint}
\begin{aligned}
    \rho(\xi^k) &= \iota^{2k} & \rho(\tau(\iota^{2k})) &= 2\iota^{2k} & \rho(e^{-k}\kappa) &= 0 & \rho(e^k) &= 0 \\
    (\xi^k)^\GG &= 0 & \tau(\iota^{2k})^\GG &= 0 & (e^{-k}\kappa)^\GG &= 2 & (e^k)^\GG &= 1.
\end{aligned}
\end{equation}

We also have the Frobenius relation
\begin{equation}\label{eqn:Frobenius}
    b\tau(a)=\tau(\rho(b)a),
\end{equation}
which is very useful for calculations involving $\tau$.

Moving now to finite projective spaces,
on pulling back along the inclusion $\Xpq pq \includesin \Xpq\infty\infty$,
the cohomology of $\Xpq pq$ contains elements
we will also call $\cwd$, $\cxwd$, $\cxwt$, and $\cwt$.
In \cite{CHTFiniteProjSpace}, we showed the following.

\begin{theorem}[{\cite[Theorem A]{CHTFiniteProjSpace}}]\label{thm:cohomStructure}
Let $0 \leq p < \infty$ and $0 \leq q < \infty$ with $p+q > 0$.
Then $H_{\GG}^{RO(\Pi B)}(\Xpq{p}{q}_+)$ is a free module over $\copt$.
As a (graded) commutative algebra over $\copt$, the ring
$H_{\GG}^{RO(\Pi B)}(\Xpq{p}{q}_+)$ is generated by $\cwd$, $\cxwd$, $\cxwt$, and $\cwt$,
together with the following classes:
$\cwd^{\;p}$ is infinitely divisible by $\cxwt$, meaning that, for $k\geq 1$,
there are unique elements $\cxwt^{-k}\cwd^{\;p}$ such that
\[
 \cxwt^k \cdot \cxwt^{-k} \cwd^{\;p} = \cwd^{\;p}.
\]
Similarly, $\cxwd^{\;q}$ is infinitely divisible by $\cwt$, meaning that, for $k\geq 1$,
there are unique elements $\cwt^{-k}\cxwd^{\;q}$ such that
\[
 \cwt^k \cdot \cwt^{-k} \cxwd^{\;q} = \cxwd^{\;q}.
\]
The generators satisfy the following further relations:
\begin{align*}
	\cxwt \cwt &= \xi, \\
	\cwt \cxwd &= (1-\kappa)\cxwt \cwd + e^2 \\
    \mathllap{\text{and}\qquad} \cwd^{\;p} \cxwd^{\;q} &= 0.
\end{align*}
\qed
\end{theorem}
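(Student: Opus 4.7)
The plan is to reduce to the already-established computation of $H_{\GG}^{RO(\Pi B)}(\Xpq{\infty}{\infty}_+)$ by pulling back along the inclusion $i\colon \Xpq pq \hookrightarrow \Xpq\infty\infty$, and to induct on $p+q$ using a cofibration coming from a codimension-one projective subspace.

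First, the relations $\cxwt\cwt = \xi$ and $\cwt\cxwd = (1-\kappa)\cxwt\cwd + e^2$ are automatic: $i^*$ is a ring map and these already hold in the infinite case. The genuinely new relation $\cwd^{\;p}\cxwd^{\;q}=0$ admits a direct geometric argument: this class is the Euler class of $pO(1)\oplus q\,\chi O(1)$, a bundle whose generic section has empty zero locus on $\Xpq pq$ for equivariant dimensional reasons. Once freeness over $\copt$ is established, the vanishing is also forced by the restriction formulas in (\ref{eqn:restrictions}), since both $\rho$ and the fixed-point map $(-)^\GG$ send $\cwd^{\;p}\cxwd^{\;q}$ to zero in the respective nonequivariant cohomologies.

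For freeness and the generating claim, I would induct on $p+q$ using the pair of cofiber sequences
\begin{align*}
(\Xpq{p-1}{q})_+ &\longrightarrow (\Xpq pq)_+ \longrightarrow T(\nu_1), \\
(\Xpq{p}{q-1})_+ &\longrightarrow (\Xpq pq)_+ \longrightarrow T(\nu_2),
\end{align*}
where $T(\nu_1)$ and $T(\nu_2)$ are the Thom spaces of the normal bundles $\nu_1 = O(1)|_{\Xpq{p-1}q}$ (of equivariant rank $\omega$) and $\nu_2 = \chi O(1)|_{\Xpq p{q-1}}$ (of equivariant rank $\chi\omega$). Via the Thom isomorphism, the connecting maps in the associated long exact sequences are multiplication by $\cwd$ and $\cxwd$, respectively. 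The base cases $\Xpq 10$ and $\Xpq 01$ are points with cohomology $\copt$. In the inductive step, the factorization $\Xpq{p-1}q \hookrightarrow \Xpq pq \hookrightarrow \Xpq\infty\infty$, together with the known freeness in the infinite case, provides the splittings that guarantee short-exactness and, once assembled, the desired basis over $\copt$.

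The infinite-divisibility claims then follow from the fixed-set behavior: on $\Xq q$ the class $\cxwt$ is a unit by (\ref{eqn:restrictions}), so pushforward along $\Xq q \hookrightarrow \Xpq pq$ of $\cxwt^{-k}$ produces a canonical element $\cxwt^{-k}\cwd^{\;p}$; uniqueness is a consequence of freeness over $\copt$ and the injectivity of multiplication by $\cxwt^{k}$ on the appropriate summand, and a symmetric argument handles $\cwt^{-k}\cxwd^{\;q}$. The main obstacle will be verifying short-exactness at each inductive stage: a priori the long exact sequences in $RO(\Pi B)$-graded cohomology could carry $\copt$-extensions, and ruling these out requires exhibiting an explicit $\copt$-basis compatible with the tower of inclusions, parallel to what is done in \cite{Co:InfinitePublished} for the infinite case.
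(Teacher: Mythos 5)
This theorem is not proved in the present paper; it is cited verbatim from \cite{CHTFiniteProjSpace} (note the \verb|\qed| in the statement with no following proof). So there is no in-paper argument to compare against, and your attempt must be judged on its own terms. The outline you propose — reduce the relations to the infinite case via $i^*$, argue $\cwd^{\;p}\cxwd^{\;q}=0$ geometrically, induct on $p+q$ along inclusions of codimension-one projective subspaces, and obtain infinite divisibility by pushing $\cxwt^{-k}$ and $\cwt^{-k}$ forward from the fixed sets — is in the right spirit and matches the recursive basis description $F_{p,q}(m)$ summarized in Appendix A.2. But two specific steps are wrong or incomplete.

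First, the cofiber sequences are mis-identified. The cofiber of $(\Xpq{p-1}{q})_+\to(\Xpq pq)_+$ is the representation sphere $S^{\Cpq{p-1}{q}}$ (the one-point compactification of the affine cell $\Xpq pq\smallsetminus\Xpq{p-1}q\cong\Cpq{p-1}q$), not the Thom space of the normal bundle $\nu_1=O(1)|_{\Xpq{p-1}q}$. The Thom space $T(\nu_1)$ sits in the \emph{other} excision pair: it is the cofiber of $(\Xpq pq\smallsetminus\Xpq{p-1}q)_+\to(\Xpq pq)_+$, and since that complement is equivariantly contractible, $T(\nu_1)$ is homotopy equivalent to $\Xpq pq$ itself — which would make your sequence vacuous. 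The long exact sequence you want has third term $\widetilde H^{*}(S^{\Cpq{p-1}q})\cong\copt^{*-\Cpq{p-1}q}$, and the connecting map must be identified with care; it is not simply multiplication by $\cwd$.

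Second, the argument that vanishing of both $\rho$ and $(-)^\GG$ forces $\cwd^{\;p}\cxwd^{\;q}=0$ has a gap: $\copt$ contains nonzero elements annihilated by both maps (for instance the $\ZZ/2$-classes $e^m\xi^n$ with $m,n\geq 1$, for which $\rho(e)=0$ and $\xi^\GG=0$ by (\ref{eqn:restrpoint})), so freeness alone does not let you detect an element from its two restrictions. You would additionally need to know the gradings of a $\copt$-basis in the coset $p\omega+q\chi\omega+RO(\GG)$ and check that no such torsion coefficient can occur, which is exactly the explicit basis bookkeeping you defer. Your geometric argument via an empty generic zero locus is sounder, but it too presupposes an equivariant transversality statement that should be made explicit. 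The remaining obstacle you flag — ruling out nontrivial $\copt$-extensions in the inductive long exact sequences — is indeed the crux of the proof in \cite{CHTFiniteProjSpace}, and acknowledging it does not close it: a complete argument needs the explicit compatible family of bases $F_{p,q}(m)$ described in the appendix.
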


We also
gave an explicit basis for $H_\GG^{RO(\Pi B)}(\Xpq pq_+)$ over $\copt$,
which we can describe as follows. 
We define sets $F_{p,q}(m)$, recursively on $p$ and $q$, that give bases for $H_\GG^{m\omega+RO(\GG)}(\Xpq pq_+)$.
(The superscript indicates the sub-$\copt$-module of groups graded on the coset
$m\omega+RO(\GG)\subseteq RO(\Pi B)$.)
For $m\in\Z$, let
\begin{align*}
 F_{p,0}(m) &:= \{ \cwt^m, \cwt^{m-1}\cwd, \cwt^{m-2}\cwd^{\;2},\ldots, \cwt^{m-p+1}\cwd^{\;p-1} \} \\
\intertext{and}
 F_{0,q}(m) &:= \{ \cxwt^m, \cxwt^{m-1}\cxwd, \cxwt^{m-2}\cxwd^{\;2}, \ldots, \cxwt^{m-q+1}\cxwd^{\;q-1} \}.
\end{align*}
(Note that $\cwt$ is invertible in the first case and $\cxwt$ is invertible in the second.)
For $p, q > 0$ we then define
\[
 F_{p,q}(m) := 
  \begin{cases}
    \{ \cwt^m \} \union i_! F_{p-1,q}(m-1) & \text{if\ \,$m\geq 0$} \\
    \{ \cxwt^{|m|} \} \union j_! F_{p,q-1}(m+1) & \text{if\,\ $m < 0$,}
  \end{cases}
\]
where $i\colon \Xpq{p-1}q \to \Xpq pq$ and $j\colon \Xpq p{(q-1)} \to \Xpq pq$ are the inclusions.
The pushforward $i_!$ is given algebraically by multiplication by $\cwd$ and $j_!$ is multiplication by $\cxwd$.

It is possible from this description to write down the bases explicitly, but the results are messy, having to be broken
down by cases depending on where $m$ falls in relation to $p$ and $q$;
this is done in \cite[Proposition~4.7]{CHTFiniteProjSpace}.
However, we can make the following general statements.
\begin{enumerate}
    % \item The basis elements all have one of the forms listed in Section~\ref{sub:cohomxpq}, hence each is represented
    %         by a singular manifold that, nonequivariantly, is a complex projective space (using Lemma~\ref{lem:nonsingular}).
    \item For fixed $p$, $q$, and $m$, there are exactly $p+q$ basis elements lying in $H_\GG^{m\omega+RO(\GG)}(\Xpq pq_+)$.
    \item Those basis elements have gradings of the form $m(\omega-2) + 2a_i + 2b_i\sigma$, $0\leq i \leq p+q-1$, where
            $a_i + b_i = i$.
    \item The basis element with grading $m(\omega-2) + 2a + 2b\sigma$ restricts to the nonequivariant class $\widehat c^{\;a+b}$, where,
            again, $\widehat c$ is the first nonequivariant Chern class of $O(1)$.
            % This element is represented by a singular manifold that, nonequivariantly, is the
            % complex projective space $\Xp{p+q-a-b}$.
    \item For a given integer $k$, there are at most two indices $i$ such that $a_i = k$.
\end{enumerate}
Figure~\ref{fig:bases} illustrates, in the case of $\Xpq 45$, how the basis elements are arranged for various values of $m$.
In each case, the basis element with grading $m(\omega-2) + 2a + 2b\sigma$ is marked by a dot at coordinates $(a,b)$.
\begin{figure}
\[
\begin{tikzpicture}[scale=0.4]
% grid and axes
	\draw[step=1cm, gray, very thin] (-7.8, -0.8) grid (5.8, 7.8);
	\draw[thick] (-8, 0) -- (6, 0);
	\draw[thick] (0, -1) -- (0, 8);

% label
 	\node[below] at (-1, -1) {$m=-6$};

% basis
    \fill (-6, 6) circle(5pt);
    \fill (-5, 6) circle(5pt);
    \fill (-4, 6) circle(5pt);
    \fill (-3, 6) circle(5pt);
    \fill (-2, 6) circle(5pt);

    \fill (0, 5) circle(5pt);
    \fill (1, 5) circle(5pt);
    \fill (2, 5) circle(5pt);
    \fill (3, 5) circle(5pt);

\end{tikzpicture}
\hspace{0.5 cm}
\begin{tikzpicture}[scale=0.4]
% grid and axes
	\draw[step=1cm, gray, very thin] (-6.8, -0.8) grid (6.8, 7.8);
	\draw[thick] (-7, 0) -- (7, 0);
	\draw[thick] (0, -1) -- (0, 8);

% label
 	\node[below] at (0, -1) {$m=-3$};

% basis
    \fill (-3, 3) circle(5pt);
    \fill (-2, 3) circle(5pt);
    \fill (-1, 3) circle(5pt);
    \fill ( 0, 3) circle(5pt);
    \fill ( 0, 4) circle(5pt);

    \fill (1, 4) circle(5pt);
    \fill (1, 5) circle(5pt);
    \fill (2, 5) circle(5pt);
    \fill (3, 5) circle(5pt);

\end{tikzpicture}
\]
\[
\begin{tikzpicture}[scale=0.4]
% grid and axes
	\draw[step=1cm, gray, very thin] (-6.8, -0.8) grid (6.8, 5.8);
	\draw[thick] (-7, 0) -- (7, 0);
	\draw[thick] (0, -1) -- (0, 6);

% label
 	\node[below] at (0, -1) {$m=0$};

% basis
    \fill (0, 0) circle(5pt);
    \fill (1, 1) circle(5pt);
    \fill (2, 2) circle(5pt);
    \fill (3, 3) circle(5pt);

    \fill (0, 1) circle(5pt);
    \fill (1, 2) circle(5pt);
    \fill (2, 3) circle(5pt);
    \fill (3, 4) circle(5pt);
    \fill (4, 4) circle(5pt);

\end{tikzpicture}
\hspace{0.5 cm}
\begin{tikzpicture}[scale=0.4]
% grid and axes
	\draw[step=1cm, gray, very thin] (-5.8, -0.8) grid (7.8, 5.8);
	\draw[thick] (-6, 0) -- (8, 0);
	\draw[thick] (0, -1) -- (0, 6);

% label
 	\node[below] at (1, -1) {$m=2$};

% basis
    \fill (0, 0) circle(5pt);
    \fill (1, 0) circle(5pt);
    \fill (2, 1) circle(5pt);
    \fill (3, 2) circle(5pt);

    \fill (2, 0) circle(5pt);
    \fill (3, 1) circle(5pt);
    \fill (4, 2) circle(5pt);
    \fill (5, 2) circle(5pt);
    \fill (6, 2) circle(5pt);

\end{tikzpicture}
\]
\[
\begin{tikzpicture}[scale=0.4]
% grid and axes
	\draw[step=1cm, gray, very thin] (-1.8, -3.8) grid (11.8, 2.8);
	\draw[thick] (-2, 0) -- (12, 0);
	\draw[thick] (0, -4) -- (0, 3);

% label
 	\node[below] at (5, -4) {$m=6$};

% basis
    \fill (6, -2) circle(5pt);
    \fill (7, -2) circle(5pt);
    \fill (8, -2) circle(5pt);
    \fill (9, -2) circle(5pt);
    \fill (10, -2) circle(5pt);

    \fill (0, 0) circle(5pt);
    \fill (1, 0) circle(5pt);
    \fill (2, 0) circle(5pt);
    \fill (3, 0) circle(5pt);

\end{tikzpicture}
\]
\caption{Bases of $H_\GG^{m\omega+RO(\GG)}(\Xpq 45_+)$}
\label{fig:bases}
\end{figure}

\clearpage

\bibliography{Bibliography}{}
\bibliographystyle{amsplain} 

\end{document}